\documentclass[11pt, reqno]{amsart}

\usepackage{amsmath,amsfonts,amssymb,mathrsfs}
\usepackage{mathrsfs,extpfeil}
\usepackage{indentfirst, latexsym, enumerate}
\usepackage[retainorgcmds]{IEEEtrantools}
\usepackage{graphicx,epsfig,subcaption,float}
\usepackage{colortbl,bm}
\usepackage{hyperref}
\usepackage{cite}
\usepackage{upgreek}
\hypersetup{
	hyperindex=true
}

\hypersetup{hidelinks}

\topmargin-0.1in \textwidth6in \textheight8.5in \oddsidemargin0in
\evensidemargin0in
\title[Fokker-Planck equations on discrete infinite graphs]{Fokker-Planck equations on discrete infinite graphs}

\author[J. A. Carrillo]{Jos\'e A. Carrillo}
\address[Jos\'e A. Carrillo]{\newline Mathematical Institute, \newline
University of Oxford, Oxford OX2 6GG, UK.}
\email{jose.carrillo@maths.ox.ac.uk}

\author[X. Wang]{Xinyu Wang$^{*}$}
\address[Xinyu Wang]{	\newline School of Mathematics \newline Harbin Institute of Technology, Harbin  150001, People's Republic of China}
\email{wangxinyumath@hit.edu.cn}

\newtheorem{theorem}{Theorem}[section]
\newtheorem{lemma}{Lemma}[section]
\newtheorem{corollary}{Corollary}[section]
\newtheorem{proposition}{Proposition}[section]

\newtheorem{remark}{Remark}[section]

\newcommand{\bbr}{\mathbb R}

\newcommand{\bbz}{\mathbb Z}

\newcommand{\bbm} {\mathbb M}
\newcommand{\bbn} {\mathbb N}

\newcommand{\bT}{\mathcal T}

\def\charf {\mbox{{\text 1}\kern-.24em {\text l}}}

\newcommand*\di{\mathop{}\!\mathrm{d}}

\allowdisplaybreaks


\begin{document}

\subjclass{35Q84,05C63,58B20} \keywords{Fokker-Planck equation, gradient flow, infinite graph, long-time behavior, Talagrand inequality}
\thanks{\textbf{Acknowledgment.}
 JAC was partially supported by the Advanced Grant Nonlocal-CPD (Nonlocal PDEs for Complex Particle Dynamics: Phase Transitions, Patterns and Synchronization) of the European Research Council Executive Agency (ERC) under the European Union Horizon 2020 research and innovation programme (grant agreement No. 883363). JAC acknowledges support by the EPSRC grant EP/V051121/1. JAC was also partially supported by the “Maria de Maeztu” Excellence Unit IMAG, reference CEX2020-001105-M, funded by MCIN/AEI/10.13039/501100011033/. XYW was supported by the Natural Science Foundation of China (grants 123B2003), the China Postdoctoral Science Foundation (grants 2025M774290), and Heilongjiang Province Postdoctoral Funding (grants LBH-Z24167). The authors would like to express special thanks to Xiaoping Xue and Antonio Esposito for their valuable comments and suggestions, which greatly improved the quality of the paper. $^{*}$Corresponding author.}

\begin{abstract}
We study the gradient flow structure and long-time behavior of Fokker–Planck equations (FPE) on infinite graphs, along with a Talagrand-type inequality in this setting. We begin by constructing an infinite-dimensional Hilbert manifold structure, extending the approach of [S. N. Chow, W. Huang, Y. Li, H. M. Zhou, Arch. Ration. Mech. Anal., 203, 969-1008 (2012)] through a novel classification method to establish injectivity of the map from quotient space to tangent space and employing functional analysis techniques to prove surjectivity. Using a combination of the relative energy method, approximation techniques, and continuity arguments, we establish the global existence and asymptotic convergence of solutions to the infinite-dimensional ODE system associated with the FPE. Specifically, we demonstrate that the FPE admits a gradient flow structure, with solutions converging exponentially to the unique Gibbs distribution. Furthermore, we prove a local Talagrand-type inequality and compare the Hilbert manifold metric induced by our framework with classical Wasserstein distances. 
\end{abstract}
\maketitle
%
%
%
%
%
%
\section{Introduction}
\setcounter{equation}{0}
As the stochastic differential equation has become one of the most powerful and widely used tools in various applied fields, including finance, physics, chemistry, and biology \cite{R1989, S1980}, a lot of research have been devoted to studying the properties of the classical Fokker–Planck equations (FPE), which governs the time evolution of the probability distribution for stochastic processes. It is well known that the groundbreaking work of Jordan, Kinderlehrer, and Otto \cite{J-K-O, O2001} established that the FPE can be interpreted as the gradient flow of the \textit{free energy functional} on a Riemannian manifold formed by the space of probability measures endowed with the 2-Wasserstein metric. \textit{The free energy functional}-usually defined on the space of probability measures as a linear combination of potential and entropy terms- has found broad applications across multiple disciplines \cite{S-W-1997, W-H-Y}. In this paper, we consider the FPE for a \textit{free energy functional} defined on an infinite graph, and the gradient flow structure, long-time behavior, and Talagrand-type inequality are investigated.

To set up the stage, we begin with the classical free energy functional defined in Euclidean space. 
\begin{align*}
	\mathcal{F}_0(\mu)=\int_{\bbr^d} \Phi \mu +\beta\mu\log\mu\di x.
\end{align*}
Its gradient flow established in \cite{J-K-O} satisfies the following nonlinear FPE
\begin{align*}
\partial_t \mu =\nabla\cdot (\mu\nabla\Phi)  +\beta\Delta \mu.
\end{align*}
This seminal discovery initiated numerous important developments connecting four fundamental concepts in continuous spaces: free energy functionals, FPE, generalized notions of Ricci curvature, and optimal transportation theory (see monographs \cite{A-G-S, Villani2009, Villani20003} for comprehensive treatments). The subsequent work of Lott-Sturm-Villani \cite{L-V-2009, Sturm1, Sturm2} established a rigorous theory of Ricci curvature in length spaces, uncovering deep connections between entropy and curvature. However, while these theoretical frameworks are well-developed for continuous settings, their discrete counterparts, particularly for finite and infinite graphs, remain poorly understood, presenting significant challenges and open questions in the field for a long time.

\

Almost a decade and a half ago, Chow, Huang, Li, and Zhou \cite{C-H-L-Z} considered FPE for a free energy functional defined on a finite graph. They showed that the solution of FPE converges to the Gibbs measure asymptotically, which mimics the entropy dissipation property in a discrete space. In particular, their analysis specifically revealed that the FPE constitutes the gradient flow of the free energy functional on a Riemannian manifold equipped with a logarithmic-mean-type metric structure that generalizes classical Wasserstein-type geometries. At the same time, Mielke \cite{M2011,M2013} and Maas 
\cite{M-2011} independently established similar Riemannian metrics in finite graphs assumed to be associated with an irreducible and reversible Markov kernel. Since then, the concept related to optimal transportation on finite discrete graphs has been extensively studied \cite{E-M-2012,C-L-Z,M-F-L-S,M-F-S-2020,E-M-W-2019,E-2014,F-M-P-2022}. In particular, Che, Huang, Li, and Tetali \cite{C-H-L-T} showed that the solution of FPE derived in \cite{C-H-L-Z} will converge to the Gibbs measure exponentially fast, and established the discrete analog of Talagrand-type inequality. Another related direction is the cosh-type generalized gradient structure, which provides an important alternative to the logarithmic-mean-type metric structure used in discrete optimal transport. A well-known drawback of the logarithmic-mean-type structure for nonlocal diffusion is its strong reliance on strict positivity of the density. This difficulty is one of the reasons why non-metric cosh-type generalized gradient-flow structures, first introduced in the context of Markov chains and large-deviation principles by Mielke, Peletier, and Renger \cite{M-P-R-2014}, have attracted increasing attention in recent years. The advantage of the cosh-type structure is that it is naturally connected with the large-deviation principle for jump processes and Markov chains, and it often provides a more flexible non-metric generalized gradient-flow formulation \cite{M-P-S-2021,P-S-2023}.

\

Compared to a finite graph, the gradient flow of the free energy functional on an infinite graph has only been touched on until five years ago \cite{E-F-S-S-2021}. Esposito, Francesco, Schlichting, and Slepcev introduced graph analogues of the continuum nonlocal-interaction equation and interpreted them as gradient flows concerning a graph Wasserstein distance. Then, a series of works has been developed to study the nonlocal interaction equation on infinite graphs \cite{H-P-S-2023,E-L-2024,E-H-S-2025,Warren-2024}. However, their technique relies on the continuity of the graphon function, and they can only deal with continuous spaces, not discrete ones. Moreover, they do not establish functional inequalities and the long-time behavior of the gradient flow.  Therefore, the following questions arise naturally:
\vspace{0.2cm}

\begin{itemize}
	\item (Q1) Does there exist a corresponding discrete gradient flow of the free energy functional on infinite graphs?
	\item (Q2) If yes, under what conditions on initial data can we establish the long-time behavior and corresponding functional inequalities?
\end{itemize}
\vspace{0.2cm}

The main purpose of this paper is to answer the above questions. For (Q1), we first introduce the neighborhood relation of vertices via an undirected graph $\mathcal{G}=(\mathcal{V},\mathcal{E})$, where $\mathcal{V}$ is a countably infinite vertex set and $\mathcal{E}\subset\mathcal{V}\times\mathcal{V}$ denotes the set of undirected edges. For any edge $e\in\mathcal{E}$ connecting vertices $i,j\in\mathcal{V}$, we write $i\sim j$. Let $\bbm=(m_i)_{i\in\mathcal{V}}$ be a network weight function on $\mathcal{V}$, \[\bbm=(m_i), \quad \sum\limits_{i\in\mathcal{V}} m_i=1,\quad m_i>0,\quad \forall~ i\in\bbn.\]
Then, we obtain the weighted infinite graphs $\mathcal{G}=(\mathcal{V},\mathcal{E}, \bbm)$. Moreover, we define the norms
\[
\|\rho\|_{l^{\infty}}:=\sup_{i\in\mathbb N} |\rho_i|,
\qquad
\|\rho\|_{l_{\bbm}^2}:=
\left(\sum_{i=1}^{\infty} m_i|\rho_i|^2\right)^{\frac12}.
\]

\

Now, we introduce relative free energy functional as follows:
\[
\mathcal{F}(\rho|\bbm):=\sum_{i=1}^{\infty} m_i\Phi_i\rho_i +\beta \sum_{i=1}^{\infty} m_i\rho_i \log \rho_i,
\]
where  $\bbm = \{m_1, m_2,\dots, m_N,\dots\}$ is the network weight, \( \rho = (\rho_1, \rho_2, \dots, \rho_N,\dots)\) is the density function, and \( \Phi = (\Phi_1, \Phi_2, \dots, \Phi_N,\dots) \in l^{\infty} \) is a given potential. This can be seen as a discrete version of classical relative free energy functional \begin{align*}
	\mathcal{F}_0(\mu| m)=\int_{\bbr^d} \Phi \rho +\beta\rho\log\rho\di m,
\end{align*}
where $\mu=\rho \di m$ and $\rho$ can be regarded as density of $\mu$ with respect to measure $m$. 

\

We adapt a similar method in \cite{C-H-L-Z} to establish the gradient flow of $\mathcal{F}(\rho|\bbm)$ on the infinite graph $\mathcal{G}=(\mathcal{V},\mathcal{E},\bbm)$ as follows.
\begin{align*}
	\begin{aligned}
		\frac{\di\rho_i}{\di t} = 
		& \sum_{ \Phi_j > \Phi_i} m_j\left[ (\Phi_j + \beta \log \rho_j) -(\Phi_i + \beta \log \rho_i) \right] \rho_j \\
		&	+  \sum_{ \Phi_j < \Phi_i} m_j\left[ (\Phi_j + \beta \log \rho_j) - (\Phi_i + \beta \log \rho_i) \right] \rho_i \\
		&	+ \sum_{\Phi_j = \Phi_i}m_j \beta(\rho_j - \rho_i), \qquad \text{for } ~i \in \mathbb{N}:=\left\{1,2,3,...\right\} .
	\end{aligned}
\end{align*}
 Our discrete infinite FPE can be seen as the natural extension of a finite case in \cite{C-H-L-Z}. Compared to an undirected finite graph setting, we consider the special infinite graph $\mathcal{G}=(\mathcal{V},\mathcal{E},\bbm)$, named sender network (weight function only depends on the sender), which has been used to study emergent behavior of infinite particle systems in \cite{H-L-S,w1,H-L-2024,H-L-Y,w5,H-W-X-2025,w8}. In particular, we show that the Gibbs distribution
\[
\rho_i^* = \frac{e^{\frac{-\Phi_i}{\beta}}}{\sum_i m_ie^{\frac{-\Phi_i}{\beta}}}, \quad i\in\bbn, 
\]
is the unique stationary distribution of the FPE when $\beta>0$. Furthermore, \( \mathcal{F}(\rho|\bbm) \) attains its global minimum at \( \rho^* \). 

The first novelty of our results is to introduce a new classification method to ensure injectivity and apply functional analysis arguments to establish surjectivity. This allows for a fully rigorous infinite-dimensional Hilbert manifold structure for infinite graphs by adapting the approach of \cite{C-H-L-Z}. 

\

Regarding question (Q2), we first introduce the manifold 
\begin{equation}\label{New1}
	\mathcal{M}  = \left\{ \rho=\left(\rho_i\right)_{i=1}^{\infty} \in l_{\bbm}^2(\bbr) ~\Bigg|~ \sum_{i=1}^{\infty} m_i\rho_i = 1, ~\ \rho_i>0, ~\ i\in \bbn \right\}
\end{equation}
and 
\begin{equation}\label{New2}
	{\rm Int} ( \mathcal{M} )= \left\{ \rho=\left(\rho_i\right)_{i=1}^{\infty} \in l_{\bbm}^2(\bbr)~\Bigg|~ \sum_{i=1}^{\infty} m_i\rho_i = 1,~\inf_i\rho_i>0\right\}.
\end{equation}
 Then, we can show that the solution of FPE will converge to the Gibbs distribution exponentially fast under $l_{\bbm}^2$-metric
\[\|\rho(t)-\rho^{*}\|_{l_{\rm \bbm}^2}^2\le \frac{\sup_i\rho_i^{*}}{\inf_i\rho_i^{*}} \|\rho_0-\rho^{*}\|_{l_{\rm \bbm}^2}^2e^{-Ct} \]
as long as the initial value $\rho_0\in {\rm Int} ( \mathcal{M} )\cap l^{\infty}(\bbr)$. For more details, we refer to Theorem \ref{T2.1}.

To establish the exponential equilibration stated in Theorem~\ref{T2.1}, we combine the relative energy method with approximation and continuity arguments. 
The key innovation lies in analyzing the evolution of the supremum and infimum of the densities on vertices, which may not be attained by any individual vertex, thereby overcoming a long-standing difficulty in the study of infinite graphs. This is achieved by deriving upper and lower uniform-in-time bounds of the solutions.
Building upon these a priori estimates, Theorem~\ref{T4.1} further establishes the following local Talagrand-type inequality:
\begin{align*}
	d_{\Phi}^2(\mu,\nu) \le \kappa\, H(\nu |\mu),
\end{align*}
for all \( \mu, \nu \in {\rm Int}\mathcal{M} \cap l^{\infty}(\mathbb{R}) \).  
Here, $\kappa>0$ is a constant depending on $\mu$ and $\nu$, $H(\nu|\mu)$ is the relative entropy between $\mu$ and $\nu$ defined as
$$
H(\nu|\mu)=\sum\limits_{i=1}^{\infty}m_i\nu_i\log\frac{\nu_i}{\mu_i}
$$
and the potential function $\Phi=\left(\Phi_i\right)_{i=1}^{\infty}$ is given in terms of $\mu$ by $\Phi_i=-\log \mu_i$. Notice that the constant $\kappa$ depends both on the upper and lower bounds of $\mu$ and $\nu$. The local Talagrand inequality leads to a better understanding of the geometric structure induced by our Hilbert manifold.

\

Our analysis reveals a rigorous gradient flow formulation of the FPE on discrete infinite graphs, underpinning the understanding of entropy-driven dynamics in infinite-dimensional discrete settings.  At the same time, several important questions remain open. First, our construction relies on the strict positivity and uniform \(l^\infty\)-boundedness of the density. This assumption is used to guarantee the non-degeneracy of the logarithmic-mean-type mobility and to obtain the comparison estimates needed for the metric tensor and the well-posedness theory. It would be desirable to replace the uniform \(l^\infty\)-bound by weaker discrete moment or integrability conditions, closer in spirit to the L\'evy-measure framework in \cite{E-2014}, where nonlocal Fokker--Planck-type equations can be studied on the whole unbounded space. Developing such a theory on discrete infinite graphs is an interesting and challenging direction for future research. Another natural direction is to extend the present analysis from sender networks to more general connected infinite graphs, for instance integer lattice networks as in \cite{B-C-G,B-2019}. Functional inequalities on infinite graphs also require further investigation, including log-Sobolev inequalities \cite{O-V-2000} and \L{}ojasiewicz inequalities \cite{L-X-2022}. Finally, the gradient-flow structure and long-time behavior of time-varying graph continuity equations have recently been studied in \cite{C-E-M-2025}. Whether the present results can be extended to time-varying infinite graphs is also an interesting topic for future research.

\

The rest of this paper is organized as follows. In Section \ref{sec:2}, we introduce the infinite-dimensional Hilbert manifold. In Section \ref{sec:3}, we derive the gradient flow of the relative free energy functional and establish its long-time behavior. In our last Section \ref{sec:4}, we prove the Talagrand-type inequalities and compare our Hilbert manifold metric with 1-Wasserstein distance. 

\vspace{0.2cm}

\section{Hilbert manifold structure}\label{sec:2}
\setcounter{equation}{0}
In this section, we first adapt the argument of \cite{C-H-L-Z} to establish a Hilbert manifold structure on infinite graphs. We then compare the sender network considered in this paper with more general infinite graphs. Throughout this section, we restrict ourselves to densities satisfying
\[
0<\rho_{\inf}:=\inf_{i\in\bbn}\rho_i
\le
\rho_{\sup}:=\sup_{i\in\bbn}\rho_i
<\infty.
\]
This restriction is used to guarantee that the logarithmic terms and the Hilbert manifold metric are well defined. Before moving on further, we provide the following notation as a preparation.\newline

\noindent\textbf{Notation}: We introduce the $l^{\infty}$- and $l_{\bbm}^2$- spaces as follows:
	\begin{align*}
	l^{\infty}(\bbr)&:= \left\{ \rho=\left(\rho_i\right)_{i=1}^{\infty}\in \bbr^{\bbn} ~\Bigg|~ \|\rho\|_{l^{\infty}}:=\sup\limits_{i\in\bbn} |\rho_i| <\infty \right\},\\
l_{\bbm}^2(\bbr)&:= \left\{ \rho=\left(\rho_i\right)_{i=1}^{\infty}\in \bbr^{\bbn} ~\Bigg|~ \|\rho\|_{l_{\bbm}^2}:=\left(\sum_{i=1}^{\infty} m_i|\rho_i|^2\right)^{\frac{1}{2}} <\infty \right\}.
\end{align*}
Throughout this paper, if \(K:X\to Y\) is a linear map between two vector spaces, we denote by
		\[
		R(K):=K(X)\subset Y
		\]
		the range of \(K\), and by
		\[
		N(K):=\{x\in X:Kx=0\}\subset X
		\]
		the kernel of \(K\).

\subsection{Tangent space and bijection map}\label{sec:2.1}
First, we introduce the manifold
\begin{equation*}
	\mathcal{M}
	=
	\left\{
	\rho=(\rho_i)_{i=1}^{\infty}\in l_{\bbm}^2(\mathbb R)
	\ \Bigg|\
	\sum_{i=1}^{\infty}m_i\rho_i=1,\quad
	\rho_i>0,\quad i\in\mathbb N
	\right\}.
\end{equation*}
Define the constraint functional
\[
\mathcal{G}:l_{\bbm}^2(\bbr)\to\mathbb R,
\qquad
\mathcal{G}(\rho)
=
\sum_{i=1}^{\infty}m_i\rho_i-1.
\]
Then, for each \(\rho\in\mathcal M\), the tangent space is given by
\begin{equation}\label{T_pM}
\mathcal{T}_\rho\mathcal{M}
=
\left\{
\sigma\in l_{\bbm}^2(\mathbb R) 
~ \Bigg|~
\frac{\di}{\di \varepsilon}
\mathcal{G}(\rho+\varepsilon\sigma)
\Big|_{\varepsilon=0}
=
\sum_{i=1}^{\infty}m_i\sigma_i=0
\right\}.
\end{equation}
Second, we define the diagonal element as 
\begin{equation*}
	\odot = \left\{ \rho=\left(\rho_i\right)_{i=1}^{\infty} \in l_{\bbm}^2~\mid~ \rho_i = c, ~c\in\bbr \right\},
\end{equation*}
and it is easy to see
\begin{equation*}
	\odot^{\perp}=  \left\{ \sigma=\left(\sigma_i\right)_{i=1}^{\infty} \in l_{\bbm}^2(\bbr) ~\Bigg|~ \sum_{i=1}^{\infty} m_i\sigma_i = 0 \right\}=\bT_{\rho}  \mathcal{M}.
\end{equation*}
Note that for any $x\in l_{\bbm}^2$, it can be decomposed into 
\[x=x^{	\odot}+x^{	\odot^{\perp}}, \quad x^{	\odot}\in	\odot,\quad x^{	\odot^{\perp}}\in 	\odot^{\perp}. \]
Moreover, we recall the equivalence relation ``$\sim$'' in $l_{\bbm}^2$.
	\begin{align*}
		[p]\sim[q] \quad \text{if and only if } p_1-q_1=p_2-q_2=...=p_i-q_i=... ,~~\forall ~i\in\mathbb{N}.
	\end{align*}
	Then, let $\mathcal{W}$ be the quotient space $l_{\bbm}^2/\sim ~\cong ~ l_{\bbm}^2/	\odot$. In other words, for $p\in l_{\bbm}^2$, we consider its equivalence class
	\[[p]=\left\{(p_1+c,p_2+c,...)\quad c\in\bbr\right\},\]
and we know $[p]=[p^{	\odot^{\perp}}].$ \newline

	Now, we want to define a bijection from $\mathcal{W}$ to $\bT_{\rho}\mathcal{M}$. We introduce the map $\tau_{\Phi(\rho)}([p])=(\sigma_i)_{i=1}^{\infty}$ ~($\mathcal{W}\to \bT_{\rho} \mathcal{M} $):
\begin{align}\label{bijection}
	\sigma_i=\sum_{j=1}^{\infty}m_j\tau_{ij}^{\Phi}(\rho)(p_i-p_j),\end{align}
where
\[\tau_{ij}^{\Phi}(\rho)=\begin{cases}
	\displaystyle \rho_i,\qquad \qquad \qquad\Phi_i>\Phi_j,\\
	\displaystyle 	\rho_j,\qquad\qquad \qquad  \Phi_i<\Phi_j,\\
	\displaystyle 	\frac{\rho_i-\rho_j}{\log \rho_i-\log \rho_j} ,\quad \Phi_i=\Phi_j.
\end{cases}\]
Next, we show that \eqref{bijection} is well-defined and a bijection.
Note that 
	\begin{align}\label{B-2-2}
\min\left\{\rho_i,~\rho_j\right\}	\le	\frac{\rho_i-\rho_j}{\log \rho_i-\log \rho_j}\le \max\left\{\rho_i,~ \rho_j\right\}.
	\end{align}
First, it is easy to verify 
\[\sum_{i=1}^{\infty}m_i\sigma_i=\sum_{i=1}^{\infty}m_i\sum_{j=1}^{\infty}m_j\tau_{ij}^{\Phi}(\rho)(p_i-p_j)=0.\]
Second, we prove that \(\tau_{\Phi(\rho)}\) is injective. Since the argument requires a distinction between the cases where the supremum of the potential representative is attained, not attained, or infinite, we give the details in Appendix~\ref{App-A}.

Next, we need to show that it is surjection, i.e., for any $\sigma\neq0\in\bT_{\rho} \mathcal{M} $, there exists some $[p]\neq[0]$ such that $\tau_{\Phi(\rho)}([p])=(\sigma_i)_{i=1}^{\infty}$.
We redefine the map $\tilde{\tau}_{\Phi(\rho)}(p)=(\tilde{\sigma}_i)_{i=1}^{\infty}$ ~($l_{\bbm}^2\to l_{\bbm}^2$):
\begin{align}\label{bijection1}
	\tilde{\sigma}_i=\sum_{j=1}^{\infty}m_j\tau_{ij}^{\Phi}(\rho)(p_i-p_j).
	\end{align}
Since we have $\sup\limits_{i,j}|\tau_{ij}^{\Phi}|\le \|\rho\|_{l^\infty}$, we know the operator $\tilde{\tau}_{\Phi(\rho)}(p)$ is bounded linear continuous and therefore closed. Furthermore, we use $\tau_{ij}^{\Phi}(\rho)=\tau_{ji}^{\Phi}(\rho)$ to see it is a seld-adjoint operator. Therefore, the kernel of dual operator $\tilde{\tau}_{\Phi(\rho)}(p)^{*}$ is also $\odot$, which combines the [Corollary 2.18, \cite{Bre}] yield
\begin{align*}
	\odot=N(\tilde{\tau}_{\Phi(\rho)}(p)^{*})=R(\tilde{\tau}_{\Phi(\rho)}(p))^{\perp}.
\end{align*}
This further implies $R(\tilde{\tau}_{\Phi(\rho)}(p))=N(\tilde{\tau}_{\Phi(\rho)}(p)^{*})^{\perp}=	\odot^{\perp}=\bT_{\rho}  \mathcal{M} .$
Then, we return to the map $\tau_{\Phi(\rho)}([p])=(\sigma_i)_{i=1}^{\infty}$ and we have  \[R(\tau_{\Phi(\rho)}([p]))=R(\tilde{\tau}_{\Phi(\rho)}(p))=\bT_{\rho}  \mathcal{M} .\]
\subsection{Hilbert manifold metric}\label{sec:2.2}
Now, we can define the inner product in $\bT_{\rho} \mathcal{M} $ as follows:
\begin{align}\label{B-2-5}
	\begin{aligned}
	g_{\rho}^{\Phi}(\sigma^1, \sigma^2)&=\left\langle p^1,\sigma^2 \right\rangle_{l_{\bbm}^2}=\sum\limits_{i=1}^{\infty}m_i p_i^1\sigma_i^2\\
	&=\frac{1}{2}\sum\limits_{i=1}^{\infty}\sum\limits_{j=1}^{\infty}m_im_j\tau_{ij}^{\Phi}(\rho)(p_i^1-p_j^1)(p_i^2-p_j^2),
		\end{aligned}
\end{align}
where $p^1$ is the identification of $\sigma^1$ through the inverse map of $\tau_{\Phi(\rho)}([p])$.
Then we have 
\begin{align}\label{B-2-6}
	\begin{aligned}
		g_{\rho}^{\Phi}(\sigma^1, \sigma^1)&=\sum\limits_{i=1}^{\infty}m_i p_i^1\sigma_i^1\\
		&=\frac{1}{2}\sum\limits_{i=1}^{\infty}\sum\limits_{j=1}^{\infty}m_im_j\tau_{ij}^{\Phi}(\rho)(p_i^1-p_j^1)(p_i^1-p_j^1).
	\end{aligned}
\end{align}
This is equivalent to the norm $\|~\cdot~\|_{l_{\bbm}^2}$ for $p^1\in	\odot^{\perp}$ and $\sigma^1$ if we have $\inf\limits_{i\in\bbn}\rho_i>0$ and $\sup\limits_{i\in\bbn}\rho_i<\infty$. 
Since for $p^1\in	\odot^{\perp}$ 
\begin{align}\label{B-2-7}
	\begin{aligned}
		\sum\limits_{i=1}^{\infty}\sum\limits_{j=1}^{\infty}m_im_j(p_i^1-p_j^1)(p_i^1-p_j^1)&=	\sum\limits_{i=1}^{\infty}m_i|p_i^1|^2+\sum\limits_{j=1}^{\infty}m_j|p_j^1|^2-2\left\langle\sum\limits_{i=1}^{\infty}m_ip_i^1, \sum\limits_{j=1}^{\infty}m_jp_j^1 \right\rangle\\
		&= 2\|p\|_{l_{\bbm}^2}^2,
	\end{aligned}
\end{align}
and then we can get 
\begin{align}\label{B-2-8}
	\begin{aligned}
		g_{\rho}^{\Phi}(\sigma^1, \sigma^1)&=\frac{1}{2}\sum\limits_{i=1}^{\infty}\sum\limits_{j=1}^{\infty}m_im_j\tau_{ij}^{\Phi}(\rho)(p_i^1-p_j^1)(p_i^1-p_j^1)\\
		&\ge\frac{1}{2} \left(\inf\limits_{i\in\bbn}\rho_i\right)\sum\limits_{i=1}^{\infty}\sum\limits_{j=1}^{\infty}m_im_j(p_i^1-p_j^1)(p_i^1-p_j^1)=\left(\inf\limits_{i\in\bbn}\rho_i\right)\|p^1\|_{l_{\bbm}^2}^2
	\end{aligned}
\end{align}
and 
\begin{align}\label{B-2-9}
	\begin{aligned}
		g_{\rho}^{\Phi}(\sigma^1, \sigma^1)&=\frac{1}{2}\sum\limits_{i=1}^{\infty}\sum\limits_{j=1}^{\infty}m_im_j\tau_{ij}^{\Phi}(\rho)(p_i^1-p_j^1)(p_i^1-p_j^1)\\
		&\le \frac{1}{2}\left(\sup\limits_{i\in\bbn}\rho_i\right)\sum\limits_{i=1}^{\infty}\sum\limits_{j=1}^{\infty}m_im_j(p_i^1-p_j^1)(p_i^1-p_j^1)=\left(\sup\limits_{i\in\bbn}\rho_i\right)\|p^1\|_{l_{\bbm}^2}^2.
	\end{aligned}
\end{align}
Furthermore, we obtain 
\begin{align}\label{B-2-10}
\|\sigma^1\|_{l_{\bbm}^2}^2&=\sum\limits_{i=1}^{\infty}m_i\Bigg|\sum\limits_{j=1}^{\infty}m_j\tau_{ij}^{\Phi}(\rho)(p_i^1-p_j^1)\Bigg|^2\nonumber\\
&\le\sum\limits_{i=1}^{\infty}m_i\sum\limits_{j=1}^{\infty}m_j(\tau_{ij}^{\Phi}(\rho))^2\Bigg|p_i^1-p_j^1\Bigg|^2\nonumber\\
&\le\left(\sup\limits_{i\in\bbn}\rho_i\right)\left(\sum\limits_{i=1}^{\infty}m_i\sum\limits_{j=1}^{\infty}m_j\tau_{ij}^{\Phi}(\rho)\Bigg|p_i^1-p_j^1\Bigg|^2\right) \nonumber\\
    &=2\left(\sup\limits_{i\in\bbn}\rho_i\right)	g_{\rho}^{\Phi}(\sigma^1, \sigma^1),
\end{align}
and 
\begin{align}\label{B-2-11}
\begin{aligned}
	\|\sigma^1\|_{l_{\bbm}^2}^2&=\sum\limits_{i=1}^{\infty}m_i\Bigg|\sum\limits_{j=1}^{\infty}m_j\tau_{ij}^{\Phi}(\rho)(p_i^1-p_j^1)\Bigg|^2\ge\left(\inf\limits_{i\in\bbn}\rho_i\right)^2	\|p^1\|_{l_{\bbm}^2}^2\ge\frac{\left(\inf\limits_{i\in\bbn}\rho_i\right)^2	}{\sup\limits_{i\in\bbn}\rho_i} g_{\rho}^{\Phi}(\sigma^1, \sigma^1).
    \end{aligned}
\end{align}
Here, we used that
\begin{align*}
\begin{aligned}
\Bigg(\sum\limits_{i=1}^{\infty}m_i\Bigg|\sum\limits_{j=1}^{\infty}m_j\tau_{ij}^{\Phi}(\rho)(p_i^1-p_j^1)\Bigg|^2\Bigg)^{\frac{1}{2}}\|p^1\|_{l_{\bbm}^2}&\ge\sum\limits_{i=1}^{\infty}m_i\Big\langle\sum\limits_{j=1}^{\infty}m_j\tau_{ij}^{\Phi}(\rho)(p_i^1-p_j^1), p_i^1 \Big\rangle\\
&=\frac{1}{2}\sum\limits_{i=1}^{\infty}m_i\sum\limits_{j=1}^{\infty}m_j\Big\langle\tau_{ij}^{\Phi}(\rho)(p_i^1-p_j^1), p_i^1-p_j^1 \Big\rangle\\
&\ge\left(\inf\limits_{i\in\bbn}\rho_i\right)	 \frac{1}{2}\sum\limits_{i=1}^{\infty}m_i\sum\limits_{j=1}^{\infty}m_j\Big\langle p_i^1-p_j^1, p_i^1-p_j^1 \Big\rangle\\
&\ge \left(\inf\limits_{i\in\bbn}\rho_i\right)	\|p^1\|_{l_{\bbm}^2}^2.
    \end{aligned}
\end{align*}

Let \(g_\rho^\Phi\) be the family of inner products on the tangent spaces
\(\mathcal T_\rho\mathcal M\) constructed above. Formally, for
\(\rho_1,\rho_2\in\mathcal M\), one may define the Hilbert manifold metric by
\begin{equation}\label{B-2-12}
	d_\Phi(\rho_1,\rho_2)
	:=
	\inf_{\gamma}\mathcal L(\gamma),
\end{equation}
where the infimum is taken over all piecewise \(C^1\) curves
\[
\gamma:[0,1]\to\mathcal M
\]
such that
\[
\gamma(0)=\rho_1,\qquad \gamma(1)=\rho_2.
\]
The arc length of such a curve is defined by
\begin{equation}\label{B-2-13}
	\mathcal L(\gamma)
	:=
	\int_0^1
	\sqrt{
	g_{\gamma(t)}^\Phi(\dot\gamma(t),\dot\gamma(t))
	}
	\,\di t.
\end{equation}

However, on the whole space \(\mathcal M\), this Hilbert manifold metric may degenerate, since the logarithmic-mean-type mobility may lose uniform
non-degeneracy near the boundary of \(\mathcal M\). Therefore, the metric
properties are established below on each fixed uniformly positive and bounded
layer. For fixed constants \(0<a<b<\infty\), define
\[
\mathcal C_{a,b}
:=
\left\{
\rho=(\rho_i)\in\mathcal M
~\Big|~
a\le \rho_i\le b,\quad i\in\bbn
\right\}.
\]
For \(\rho_1,\rho_2\in\mathcal C_{a,b}\), we define the restricted Hilbert manifold metric
\[
	d_\Phi^{a,b}(\rho_1,\rho_2)
	:=
	\inf_{\gamma\subset\mathcal C_{a,b}}\mathcal L(\gamma),
\]
where the infimum is taken over all piecewise \(C^1\) curves
\(\gamma:[0,1]\to\mathcal C_{a,b}\) satisfying
\[
\gamma(0)=\rho_1,\qquad \gamma(1)=\rho_2.
\]
When the layer \(\mathcal C_{a,b}\) is fixed and clear from the context, we simply
write \(d_\Phi\) instead of \(d_\Phi^{a,b}\).
Notice that \(\mathcal C_{a,b}\) is convex. Indeed, if
\(\rho_1,\rho_2\in\mathcal C_{a,b}\), then the linear interpolation
\[
\gamma(t)=(1-t)\rho_1+t\rho_2,\qquad t\in[0,1],
\]
also belongs to \(\mathcal C_{a,b}\). Hence any two points in
\(\mathcal C_{a,b}\) can be connected by an admissible piecewise \(C^1\) curve.
We now verify that \(d_\Phi\) satisfies the axioms of a metric on
\(\mathcal C_{a,b}\). Non-negativity follows directly from the definition of
\(\mathcal L\), since \(g_\rho^\Phi\) is positive definite. Moreover, for every
\(\rho\in\mathcal C_{a,b}\), the constant curve \(\gamma(t)\equiv\rho\) is admissible
and satisfies \(\mathcal L(\gamma)=0\). Hence
\[
d_\Phi(\rho,\rho)=0.
\]

To prove symmetry, let \(\gamma:[0,1]\to\mathcal C_{a,b}\) be an admissible
piecewise \(C^1\) curve joining \(\rho_1\) to \(\rho_2\). Define the reversed curve
\[
\widetilde\gamma(t):=\gamma(1-t).
\]
Then \(\widetilde\gamma\) joins \(\rho_2\) to \(\rho_1\), remains in
\(\mathcal C_{a,b}\), and
\[
\dot{\widetilde\gamma}(t)=-\dot\gamma(1-t).
\]
Since \(g_{\gamma(t)}^\Phi\) is quadratic in the tangent vector, we have
\[
\mathcal L(\widetilde\gamma)=\mathcal L(\gamma).
\]
Taking the infimum over all admissible curves gives
\[
d_\Phi(\rho_1,\rho_2)=d_\Phi(\rho_2,\rho_1).
\]

The triangle inequality follows from concatenation of curves. Let
\(\rho_1,\rho_2,\rho_3\in\mathcal C_{a,b}\). For any \(\varepsilon>0\), choose
admissible piecewise \(C^1\) curves \(\gamma_1\) from \(\rho_1\) to \(\rho_2\) and
\(\gamma_2\) from \(\rho_2\) to \(\rho_3\) such that
\[
\mathcal L(\gamma_1)\le d_\Phi(\rho_1,\rho_2)+\varepsilon,
\qquad
\mathcal L(\gamma_2)\le d_\Phi(\rho_2,\rho_3)+\varepsilon.
\]
Their concatenation is again a piecewise \(C^1\) curve in \(\mathcal C_{a,b}\) joining
\(\rho_1\) to \(\rho_3\), and its length is
\[
\mathcal L(\gamma_1)+\mathcal L(\gamma_2).
\]
Therefore,
\[
d_\Phi(\rho_1,\rho_3)
\le
d_\Phi(\rho_1,\rho_2)
+
d_\Phi(\rho_2,\rho_3)
+
2\varepsilon.
\]
Letting \(\varepsilon\to0\), we obtain
\[
d_\Phi(\rho_1,\rho_3)
\le
d_\Phi(\rho_1,\rho_2)
+
d_\Phi(\rho_2,\rho_3).
\]

It remains to prove non-degeneracy. Since every admissible curve stays in
\(\mathcal C_{a,b}\), we have
\[
a\le\inf_{i\in\bbn}\gamma_i(t)
\le
\sup_{i\in\bbn}\gamma_i(t)\le b,
\qquad t\in[0,1].
\]
In particular, the logarithmic-mean-type mobility is uniformly non-degenerate and
bounded along the curve. By \eqref{B-2-10}, for every tangent vector
\(\sigma\in\mathcal T_{\gamma(t)}\mathcal M\),
\[
\|\sigma\|_{l_{\bbm}^2}^2
\le
2b\,g_{\gamma(t)}^\Phi(\sigma,\sigma).
\]
Applying this estimate to \(\sigma=\dot\gamma(t)\), we obtain
\[
g_{\gamma(t)}^\Phi(\dot\gamma(t),\dot\gamma(t))
\ge
\frac{1}{2b}
\|\dot\gamma(t)\|_{l_{\bbm}^2}^2.
\]
Hence
\[
\mathcal L(\gamma)
\ge
\frac{1}{\sqrt{2b}}
\int_0^1
\|\dot\gamma(t)\|_{l_{\bbm}^2}
\,\di t.
\]
By the triangle inequality in \(l_{\bbm}^2\),
\[
\int_0^1
\|\dot\gamma(t)\|_{l_{\bbm}^2}
\,\di t
\ge
\left\|
\int_0^1\dot\gamma(t)\,\di t
\right\|_{l_{\bbm}^2}
=
\|\rho_2-\rho_1\|_{l_{\bbm}^2}.
\]
Therefore,
\[
\mathcal L(\gamma)
\ge
\frac{1}{\sqrt{2b}}
\|\rho_2-\rho_1\|_{l_{\bbm}^2}.
\]
Taking the infimum over all admissible curves gives
\[
d_\Phi(\rho_1,\rho_2)
\ge
\frac{1}{\sqrt{2b}}
\|\rho_2-\rho_1\|_{l_{\bbm}^2}.
\]
Consequently, if \(d_\Phi(\rho_1,\rho_2)=0\), then
\[
\|\rho_2-\rho_1\|_{l_{\bbm}^2}=0,
\]
and hence \(\rho_1=\rho_2\). Thus \(d_\Phi\) is non-degenerate.

Therefore, for every fixed \(0<a<b<\infty\), \((\mathcal C_{a,b},d_\Phi)\) is a metric space.
Moreover, on \(\mathcal C_{a,b}\), the metric tensor \(g_\rho^\Phi\) is well defined and
uniformly comparable with the ambient \(l_{\bbm}^2\)-topology. Indeed, for every
\(\rho\in\mathcal C_{a,b}\) and \(\sigma\in\mathcal T_\rho\mathcal M\), estimate
\eqref{B-2-10} gives
\[
\|\sigma\|_{l_{\bbm}^2}^2
\le
2b\,g_\rho^\Phi(\sigma,\sigma),
\]
while the corresponding upper bound follows from \eqref{B-2-11}. Hence \(d_\Phi\)
is the Hilbert manifold metric induced by \(g_\rho^\Phi\) on the fixed layer
\(\mathcal C_{a,b}\).

\subsection{A shift from infinite graphs to sender networks}\label{sec:2.3}
In this subsection, we recall some infinite graphs and compare them with the sender network. 
\subsubsection{Generally fully connected infinite graph}\label{sec:2.3.1}

In emergent dynamics, they also consider the following generally fully connected infinite graphs $\mathcal{G}=(\mathcal{V},\mathcal{E}, A=(a_{ij}))$ as follows (see \cite{H-L-2024, H-L-S, H-L-Y}). 
For a given infinite network matrix ${A} = (A_{ij})$, we set 
\[  \| {A} \|_{-\infty, 1} := \inf_{i \in \mathbb{N}} \sum_{j=1}^{\infty}a_{ij} \quad \mbox{and} \quad   \|{A} \|_{\infty, 1} :=  \sup_{i \in \mathbb{N}} \sum_{j=1}^{\infty}a_{ij}.\]
We say that infinite graphs $\mathcal{G}=(\mathcal{V},\mathcal{E}, A=(a_{ij}))$ is generally fully connected means
	\[ \| A\|_{-\infty,1} > 0, \quad \exists ~\tilde{\mathbf{A}} = \{\tilde{a}_j\}_{j\in\mathbb{N}} \in l^1 \quad \left(\sum\limits_{j=1}^{\infty}\tilde{a}_j<\infty\right) \quad \mbox{such that} \quad \frac{a_{ij}}{\sum_{k=1}^\infty a_{ik}} \ge \tilde{a}_j > 0.\]
We now clarify the relation between this notion and the sender network used in the 
present paper. In our setting, the interaction weight is determined by the sequence
\(\bbm=(m_j)_{j\in\bbn}\), and the corresponding network matrix is
\[
a_{ij}=m_j,\qquad i,j\in\bbn.
\]
Thus each row of \(A\) coincides with the weight sequence \(\bbm\). Since
$
\sum_{j=1}^{\infty}m_j=1,
$
we have
\[
\sum_{j=1}^{\infty}a_{ij}=\sum_{j=1}^{\infty}m_j=1,\qquad i\in\bbn,
\]
and hence
\[
\|A\|_{-\infty,1}
=
\|A\|_{\infty,1}
=
1.
\]
Moreover,
\[
\frac{a_{ij}}{\sum_{k=1}^{\infty}a_{ik}}
=
m_j.
\]
Therefore, by taking
		$
		\tilde a_j=m_j,~j\in\bbn,
		$
		we obtain
		\[
		\frac{a_{ij}}{\sum_{k=1}^{\infty}a_{ik}}
		=
		\tilde a_j>0,
		\qquad i,j\in\bbn,
		\]
		and
		\[
	\sum_{j=1}^{\infty}\tilde a_j
		=
		\sum_{j=1}^{\infty}m_j
		=
		1<\infty.
		\]
Hence the sender network considered in this paper is a particular example of a generally fully connected infinite graph.
\subsubsection{Locally finite infinite graph}\label{sec:2.3.2}
 Another important example of infinite graphs is locally finite infinite graphs. Locally finite means each vertex can only be affected by a finite neighborhood, i.e.,
\[\sup\limits_{i}\#\left\{j\in\mathcal{N}(i)~\Bigg|~a_{ij}>0\right\}<\infty.\]
Like infinite lattice graph $\bbz^d$ and other examples presented in \cite{B-C-G,B-2019}. If we have
\[\bar{m}_i=\sum_{j\in\mathcal{N}(i)}a_{ij} \quad \text{and }\quad m_{\rm sum}:=\sum\limits_{i=1}^{\infty}\bar{m}_i<\infty.\]
Then, we can let \[\frac{\bar{m}_i}{m_{\rm sum}}=:m_i\] be the new weight from node $i$ to each node $k\in\mathcal{V}$. This gives us a new sender network from a locally finite infinite graph. A similar procedure can be done for infinite graphs that weight $A=(a_{ij})$ satisfies 
\[\sum\limits_{i=1}^{\infty}\sum\limits_{j=1}^{\infty}a_{ij}<\infty.\]
Also note that we do not require the graph to be symmetric, only that the weights of the graph be summable. The summability of the weights indicates that the network strength has enough decay at infinity so that they can become the bottom space weights of the solution in the following sections.

\section{Gradient flow of free energy functional on infinite graph}\label{sec:3}
\setcounter{equation}{0}
In this section, we establish the gradient flow of the free energy functional and show its long-time behavior.

Let \( \beta \geq 0 \) be a fixed constant. We recall the relative free energy functional \( \mathcal{F} \) on the manifold \( \mathcal{M} \) as:
\begin{equation*}
	\mathcal{F}(
\rho| \bbm) := \sum_{i=1}^{\infty} m_i\Phi_i \rho_i + \beta \sum_{i=1}^{\infty}m_i \rho_i \log \rho_i, \quad \sum\limits_{i=1}^{\infty}m_i=1, \quad m_i>0, \quad\forall~ i \in \bbn,
\end{equation*}
where \( \rho = (\rho_1, \rho_2, \dots, \rho_N,\dots) \in  \mathcal{M}  \), and \( \Phi = (\Phi_1, \dots, \Phi_N,\dots) \in l^{\infty} \) is a given potential. Then the gradient flow of \( \mathcal{F} \) with respect to the Hilbert manifold metric \( g^\Phi \) on \( \mathcal{M} \) is given by:
\begin{equation*}
	\frac{\di \rho}{\di t} = -\operatorname{grad} \mathcal{F}(\rho|\bbm),
\end{equation*}
where \( \operatorname{grad} \mathcal{F}(
\rho| \bbm) \) is the Hilbert manifold gradient of \( \mathcal{F} \) at point \( \rho \) under the metric \( g_{\rho}^\Phi \).
Let \( \mathrm{diff}\,\mathcal{F} \) denote the differential (Fréchet derivative) of \( \mathcal{F} \), which lies in the cotangent space. Then the gradient flow condition is equivalent to:
\begin{equation}\label{C-1-1}
	g_\rho^\Phi\left( \frac{\di\rho}{\di t}, \sigma \right) = - \langle \mathrm{diff} \,\mathcal{F}(
\rho| \bbm) ,\sigma\rangle_{l_{\bbm}^2}, \quad \forall ~\sigma \in \bT_\rho  \mathcal{M} .
\end{equation}
It is easy to verify that:
\begin{equation*}
	\mathrm{diff}\,\mathcal{F}(
\rho| \bbm) = \left( \Phi_i + \beta(1 + \log \rho_i) \right)_{i=1}^{\infty}.
\end{equation*}
Using this and the identification in Section \ref{sec:2}, one can obtain an explicit expression for the gradient flow on infinite graphs.

\begin{theorem}\label{T2.1}
	Let \( \mathcal{G} = (\mathcal{V}, \mathcal{E},\bbm) \) be an infinite graph with vertex set \( \mathcal{V} = \{a_1, a_2, \dots, a_N, \dots\} \), edge set \( \mathcal{E} \), weight function $\bbm$, a potential \( \Phi = (\Phi_i)_{i=1}^{\infty}\in l^{\infty}(\bbr) \) defined on \( \mathcal{V} \), and a constant \( \beta \geq 0 \). Then, the following assertions hold:

\begin{enumerate}
	\item The gradient flow of \( \mathcal{F} \) on the Hilbert manifold \( ( \mathcal{M} , d_\Phi) \) of probability densities on \( \mathcal{V} \) is given by the discrete infinite FPE:
	\begin{align}\label{FP}
		\begin{aligned}
		\frac{\di\rho_i}{\di t} = 
		& \sum_{ \Phi_j > \Phi_i} m_j\left[ (\Phi_j + \beta \log \rho_j) -(\Phi_i + \beta \log \rho_i) \right] \rho_j \\
	&	+  \sum_{ \Phi_j < \Phi_i} m_j\left[ (\Phi_j + \beta \log \rho_j) - (\Phi_i + \beta \log \rho_i) \right] \rho_i \\
	 &	+ \sum_{\Phi_j = \Phi_i}m_j \beta(\rho_j - \rho_i), \qquad \text{for } ~i \in \mathbb{N} .
	 \end{aligned}
	\end{align}
	
	\item For all \( \beta > 0 \), the Gibbs distribution
	\[
	\rho_i^* = \frac{e^{\frac{-\Phi_i}{\beta}}}{\sum_i m_ie^{\frac{-\Phi_i}{\beta}}}, \quad i\in\bbn, 
	\]
	is the unique stationary distribution of the FPE. Furthermore, \( \mathcal{F} \) attains its global minimum at \( \rho^* \).
	
	\item For any \( \beta > 0 \), there exists a unique solution
	\[
	\rho(t): [0, \infty) \to {\rm Int} ( \mathcal{M} )\cap l^{\infty}(\bbr)
	\]
	to the FPE with initial value \( \rho(0) = \rho_0 \in {\rm Int} ( \mathcal{M} ) \cap l^{\infty}(\bbr)\), such that:
	
	\begin{itemize}
		\item[(a)] \( \mathcal{F}(
\rho|\bbm) \) is non-increasing in time \( t \);

\item[(b)]  there exists some constant $C_1$ and $C_2$ depend on initial value such that \[0<C_1\le\inf\limits_{0\le t<\infty}\inf\limits_{i\in\bbn} \rho_i(t)\le \sup\limits_{0\le t<\infty}\sup\limits_{i\in\bbn} \rho_i(t)\le C_2<\infty;\]

		\item[(c)] \( \rho(t)\) converges to \(\rho^* \) exponentially under the $l_{\rm \bbm}^2$-metric as \( t \to +\infty \).
	\end{itemize}
\end{enumerate}
\end{theorem}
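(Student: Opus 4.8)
The plan is to treat the three assertions sequentially, using the Hilbert manifold machinery of Section~\ref{sec:2} for part (1), elementary convexity/Lagrange-multiplier arguments for part (2), and a truncation--approximation scheme combined with the relative energy method for part (3). For part (1), I would start from the gradient flow identity \eqref{C-1-1}, substitute $\mathrm{diff}\,\mathcal{F}(\rho|\bbm)=(\Phi_i+\beta(1+\log\rho_i))_{i=1}^{\infty}$, and use the fact that $g_\rho^\Phi$ is expressed through the bijection $\tau_{\Phi(\rho)}$ of \eqref{bijection}. Writing $\sigma=\tau_{\Phi(\rho)}([p])$ and $\dot\rho=\tau_{\Phi(\rho)}([q])$, the left side of \eqref{C-1-1} becomes $\tfrac12\sum_{i,j}m_im_j\tau_{ij}^\Phi(\rho)(q_i-q_j)(p_i-p_j)$ by \eqref{B-2-5}, and the right side is $-\sum_i m_i(\Phi_i+\beta(1+\log\rho_i))\sigma_i$, which after using $\sum_i m_i\sigma_i=0$ and the symmetrization already performed in the $g_\rho^\Phi$ formula becomes $-\tfrac12\sum_{i,j}m_im_j\tau_{ij}^\Phi(\rho)\big[(\Phi_i+\beta\log\rho_i)-(\Phi_j+\beta\log\rho_j)\big](p_i-p_j)$. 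Matching for all $[p]$ and using injectivity of $\tau_{\Phi(\rho)}$ forces $q_i-q_j=-\big[(\Phi_i+\beta\log\rho_i)-(\Phi_j+\beta\log\rho_j)\big]$, i.e.\ $[q]=-[\Phi+\beta\log\rho]$; then $\dot\rho_i=\tau_{\Phi(\rho)}([q])_i=\sum_j m_j\tau_{ij}^\Phi(\rho)(q_i-q_j)$ yields \eqref{FP} directly once the three cases in the definition of $\tau_{ij}^\Phi$ are written out (the $\Phi_j=\Phi_i$ case collapses because $\frac{\rho_i-\rho_j}{\log\rho_i-\log\rho_j}(\log\rho_i-\log\rho_j)=\rho_i-\rho_j$, absorbing the $\log$ terms and leaving only $\beta(\rho_j-\rho_i)$).

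For part (2), stationarity of $\rho^*$ is checked by direct substitution: $\Phi_i+\beta\log\rho_i^*=-\beta\log(\sum_k m_ke^{-\Phi_k/\beta})$ is independent of $i$, so every bracket in \eqref{FP} vanishes. Uniqueness and the global minimum claim I would obtain together from strict convexity of $\rho\mapsto\mathcal{F}(\rho|\bbm)$ on $\mathcal{M}$ (the entropy term $\sum_i m_i\rho_i\log\rho_i$ is strictly convex, the potential term is linear), combined with the constraint $\sum_i m_i\rho_i=1$: a Lagrange-multiplier computation shows the unique critical point is $\rho^*$, and one verifies $\mathcal{F}(\rho|\bbm)-\mathcal{F}(\rho^*|\bbm)=\beta H(\rho|\rho^*)\ge0$ with equality iff $\rho=\rho^*$. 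Any stationary point of the flow must make all brackets vanish, hence $\Phi_i+\beta\log\rho_i$ constant, which is exactly the criticality condition; care is needed to argue that the resulting $\rho$ lies in $\mathcal{M}$ (summability and normalization), but $\Phi\in l^\infty$ makes $e^{-\Phi_i/\beta}$ bounded above and below, so $\rho^*$ is a genuine element of $\mathcal{M}$ with $0<\inf_i\rho_i^*\le\sup_i\rho_i^*<\infty$.

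For part (3), I would first treat finite truncations: fix $N$, freeze the dynamics to vertices $1,\dots,N$ (or better, use the scheme where tail vertices are held at a reference value), obtaining a finite system to which the Chow--Huang--Li--Zhou results apply, giving local-in-time solutions; then pass to the limit $N\to\infty$ using the uniform bounds. The heart of the matter, and the main obstacle, is establishing (b): the uniform-in-time two-sided bounds $0<C_1\le\inf_{t,i}\rho_i(t)\le\sup_{t,i}\rho_i(t)\le C_2$. On a finite graph one controls the minimum by noting that at a vertex achieving the minimum all incoming brackets are nonnegative, so $\dot\rho_{\min}\ge0$-type reasoning applies; on an infinite graph the infimum need not be attained, so I would instead work with $m(t):=\inf_i\rho_i(t)$ and $M(t):=\sup_i\rho_i(t)$ directly, derive differential inequalities for them by an $\varepsilon$-approximation (pick a vertex $i_\varepsilon$ with $\rho_{i_\varepsilon}(t)\le m(t)+\varepsilon$, estimate $\frac{d}{dt}\rho_{i_\varepsilon}$ from \eqref{FP} using $\Phi\in l^\infty$ and $\sum_j m_j=1$, and send $\varepsilon\to0$), showing $m(t)$ cannot decrease below a level determined by $\|\Phi\|_\infty$, $\beta$, and $\inf_i\rho_i(0)$, and symmetrically for $M(t)$. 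Once (b) holds, the metric $g_\rho^\Phi$ is uniformly comparable to $\|\cdot\|_{l_\bbm^2}$ by \eqref{B-2-8}--\eqref{B-2-11}, so the flow is a genuine gradient flow with controlled geometry; (a) is immediate since $\frac{d}{dt}\mathcal{F}(\rho|\bbm)=-g_\rho^\Phi(\dot\rho,\dot\rho)\le0$; and (c) follows from the relative energy / entropy dissipation estimate: differentiating $H(\rho(t)|\rho^*)$ along \eqref{FP} and using the uniform bounds to bound the dissipation below by a multiple of $H(\rho(t)|\rho^*)$ (a discrete log-Sobolev-type inequality valid once densities are bounded away from $0$ and $\infty$, provable by a Poincaré inequality on the sender network since $\inf_j m_j$ enters favorably through $\sum_j m_j=1$), giving $H(\rho(t)|\rho^*)\le H(\rho_0|\rho^*)e^{-Ct}$; the stated $l_\bbm^2$ estimate then follows from a Csiszár--Kullback--Pinsker-type inequality adapted to the weighted setting, using $\|\rho-\rho^*\|_{l_\bbm^2}^2\le \frac{\sup_i\rho_i^*}{\inf_i\rho_i^*}\|\rho-\rho^*\|$-type comparisons together with $H(\rho|\rho^*)\gtrsim\|\rho-\rho^*\|_{l_\bbm^2}^2$ valid under the uniform bounds.
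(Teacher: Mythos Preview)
Your arguments for parts (1) and (2) are essentially the same as the paper's, and correct.

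For part (3), the outline is right in spirit (an $\varepsilon$-approximation for the infimum/supremum, a relative-energy dissipation estimate, Gr\"onwall), but the step establishing the uniform lower bound on $m(t):=\inf_i\rho_i(t)$ has a genuine gap. If you pick a near-minimizing vertex $i_\varepsilon$ and estimate $\dot\rho_{i_\varepsilon}$ using only $\Phi\in l^\infty$ and $\sum_j m_j=1$, the best you get is $\dot m(t)\ge -2\|\Phi\|_\infty\, m(t)$ (the $\Phi_j<\Phi_i$ block carries the factor $\rho_i$, the other blocks are nonnegative up to $\varepsilon$), which gives only an exponentially decaying lower bound, not a uniform one. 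To make the infimum bounce back you need a vertex $j_0$ that contributes a term of definite positive size to the right-hand side; the paper obtains this by first showing (from the monotonicity of the $\chi^2$-type relative energy $L(t)=\sum_i m_i(\rho_i-\rho_i^*)^2/\rho_i^*$) that $\|\rho(t)\|_{l^2_\bbm}$ is uniformly bounded, hence the tail $\sum_{i>N_0}m_i\rho_i$ is uniformly small, hence for every $t$ some $j_0(t)\le N_0$ satisfies $\rho_{j_0(t)}(t)>\tfrac12$. The contribution of this $j_0$ in the ODE for $\rho_{i_\varepsilon}$ carries the factor $m_{j_0}\ge M_{N_0}:=\min_{j\le N_0}m_j>0$ and a logarithmic gap $\log\rho_{j_0}-\log\rho_{\inf}$ that blows up as $\rho_{\inf}\to0$, forcing $\dot m>0$ once $m$ hits a specific threshold $C_1$. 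Your appeal to ``$\inf_j m_j$ enters favorably'' cannot work here, since $\inf_j m_j=0$ on any infinite graph with $\sum_j m_j=1$. Note also the circularity that has to be broken: the $L(t)\le L(0)$ bound itself presupposes $0<\inf_i\rho_i(t)\le\sup_i\rho_i(t)<\infty$; the paper closes this loop by a continuity argument, defining $\tau$ as the first exit time from $\{C_1\le\inf\rho_i\le\sup\rho_i\le C_2\}$ and showing $\tau=\infty$ by contradiction.

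A secondary difference: you propose to run the dissipation argument on the relative entropy $H(\rho|\rho^*)$ and invoke a discrete log-Sobolev inequality, whereas the paper works with the $\chi^2$ quantity $L(t)$ and closes a direct Poincar\'e-type estimate (Proposition~\ref{P2.1}) that needs only the uniform two-sided bounds on $\rho$ and $\rho^*$. Your route is not wrong in principle, but the log-Sobolev step is not free and, as noted, cannot be grounded on $\inf_j m_j$; with $L$ the computation is elementary once (b) is in hand, and the $l^2_\bbm$ convergence in (c) follows immediately from $L(t)\le L(0)e^{-Ct}$ and the equivalence $\frac{1}{\sup_i\rho_i^*}\|\rho-\rho^*\|_{l^2_\bbm}^2\le L\le\frac{1}{\inf_i\rho_i^*}\|\rho-\rho^*\|_{l^2_\bbm}^2$, with no need for Csisz\'ar--Kullback--Pinsker.
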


\begin{remark}
We now briefly explain the correspondence between the above discrete formulation and the classical continuous FPE. In the continuous setting, if \(\mu=\rho\,\di m\), the relative free energy is given by
	\[
	\mathcal F_0(\mu|m)
	=
	\int_{\mathbb R^d}\Phi(x)\rho(x)\di m(x)
	+
	\beta\int_{\mathbb R^d}\rho(x)\log\rho(x)\di m(x).
	\]
	In our discrete infinite-graph setting, the reference measure \(m\) is replaced by the atomic measure
	\[
	m=\sum_{i=1}^{\infty}m_i\delta_i,
	\]
	and the density \(\rho\) is represented by the sequence \(\rho=(\rho_i)_{i\in\bbn}\). 
	The FPE above should also be understood as a discrete counterpart of the Wasserstein gradient flow of the relative free energy. In the continuous case, the FPE can be written formally as
	\[
	\partial_t\rho
	=
	\nabla\cdot\left(\rho\nabla
	\left(\Phi+\beta\log\rho\right)\right),
	\]
	where \(\Phi+\beta\log\rho\) is the first variation of the relative free energy. In the graph setting, spatial gradients are replaced by differences along pairs of vertices,
	\[
	\left(\Phi_j+\beta\log\rho_j\right)
	-
	\left(\Phi_i+\beta\log\rho_i\right),
	\]
	and the transport mobility is chosen according to the upwind/logarithmic-mean-type structure introduced in the finite-graph framework of \cite{C-H-L-Z,M-2011,M2011}. 
In this sense, our formulation should be understood as a structural discrete analogue of the continuous FPE, rather than as a direct discretization or particle-in-cell approximation.  
\end{remark}
	\begin{remark}
		We give a concrete example illustrating the assumptions and the asymptotic convergence result. 
		Let
		\[
		m_i=2^{-i},\qquad \Phi_i=\frac{1}{i+1},\qquad i\in\bbn.
		\]
		Then \(m_i>0\), \(\sum_{i=1}^{\infty}m_i=1\), and 
		\(\Phi\in l^\infty(\bbr)\) with \(0<\Phi_i\le 1/2\). The corresponding Gibbs equilibrium is
		\[
		\rho_i^\ast
		=
		\frac{e^{-\Phi_i/\beta}}{Z_\beta},
		\qquad
		Z_\beta
		:=
		\sum_{k=1}^{\infty}m_k e^{-\Phi_k/\beta}.
		\]
		Moreover, since \(0<\Phi_i\le 1/2\), we have
		\[
		e^{-1/(2\beta)}\le Z_\beta\le 1,
		\qquad
		e^{-1/(2\beta)}\le \rho_i^\ast\le e^{1/(2\beta)},
		\quad i\in\bbn.
		\]
		Hence \(\rho^\ast\) is strictly positive and uniformly bounded. As an admissible initial value $\rho_0=(\rho_{i0})$, one may take
		\[
		\rho_{i0}
		=
		\frac{1+\varepsilon_i}
		{\sum_{k=1}^{\infty}m_k(1+\varepsilon_k)},
		\]
		where \((\varepsilon_i)_{i\in\bbn}\in l^\infty(\bbr)\) satisfies
		\[
		-1+\delta\le \varepsilon_i\le C
		\]
		for some constants \(\delta\in(0,1)\) and \(C>0\). Then \(\rho_0\) is a strictly positive and uniformly bounded probability density. Therefore, the global well-posedness and long-time convergence results apply, and the corresponding solution of the discrete FPE converges exponentially to \(\rho^\ast\) in the $l_{\bbm}^2$-metric stated in Theorem~\ref{T2.1}.
	\end{remark}
\begin{remark}\label{R3.1}
	Compared to finite graph settings \cite{C-H-L-T, C-H-L-Z, C-L-Z}, our proof needs to be more careful not only to deal with the lower bound of solutions, but also the upper bound. Moreover, the infimum and supremum may not be reached by any single vertex in an infinite graph, so we need to use a novel approximate method to estimate the derivatives of the infimum and supremum of solutions.
\end{remark}
\begin{proof}
	
	\noindent $\divideontimes$ (1): We use the symmetry of $\tau_{ij}^{\Phi}(\rho)$ to see
	\begin{align*}
			g_\rho^\Phi\left( \frac{\di\rho}{\di t}, \sigma \right)&=\sum\limits_{i\in\bbn}m_i\frac{\di \rho_i}{\di t} p_i=\sum\limits_{i\in\bbn}m_i\left(-\Phi_i - \beta(1 + \log \rho_i) \right)\sigma_i\\
			&=\sum\limits_{i\in\bbn}m_i\left(-\Phi_i - \beta( \log \rho_i) \right)\sigma_i\\
			&=\sum\limits_{i\in\bbn}m_i\left(\left(-\Phi_i - \beta( \log \rho_i)\right) \sum_{j\in\bbn}m_j\tau_{ij}^{\Phi}(\rho)(p_i-p_j)\right)\\
				&=\frac{1}{2}\sum\limits_{i\in\bbn}\sum_{j\in\bbn}m_im_j\left(\left(\Phi_j + \beta( \log \rho_j)-\Phi_i - \beta( \log \rho_i)\right) \tau_{ij}^{\Phi}(\rho)(p_i-p_j)\right)\\
				&=\sum\limits_{i\in\bbn}m_i\Big[\sum_{j\in\bbn}m_j\left(\Phi_j + \beta( \log \rho_j)-\Phi_i - \beta(\log \rho_i)\right) \tau_{ij}^{\Phi}(\rho)\Big]p_i.
	\end{align*}
	This implies 
	\begin{align*}
		\frac{\di \rho_i}{\di t}&=\sum_{j\in\bbn}m_j\left(\Phi_j + \beta(\log \rho_j)-\Phi_i - \beta( \log \rho_i)\right) \tau_{ij}^{\Phi}(\rho)\\
		& =\sum_{ \Phi_j > \Phi_i} m_j\left[ (\Phi_j + \beta \log \rho_j) -(\Phi_i + \beta \log \rho_i) \right] \rho_j \\
		&\quad+ \sum_{ \Phi_j < \Phi_i} m_j\left[ (\Phi_j + \beta \log \rho_j) - (\Phi_i + \beta \log \rho_i) \right] \rho_i \\
		&\quad+ \sum_{\Phi_j = \Phi_i}m_j \beta(\rho_j - \rho_i), \quad \text{for } ~i = 1,2, \dots .
	\end{align*}
		\noindent $\divideontimes$ (2): We note that the stationary point satisfies
		\begin{align*}
			g_\rho^\Phi\left( \frac{\di\rho^{*}}{\di t}, \sigma \right)
			=
			\sum_{i\in\bbn}m_i\frac{\di \rho_i^{*}}{\di t} p_i
			=
			\sum_{i\in\bbn}m_i\left(-\Phi_i-\beta\log \rho_i^{*}\right)\sigma_i
			=
			0
		\end{align*}
		for every \(\sigma\in\bT_{\rho}\mathcal{M}\). Since
		\[
		\bT_{\rho}\mathcal{M}
		=
		\left\{
		\sigma\in l_{\bbm}^2(\bbr)
		\ \Bigg|\
		\sum_{i=1}^{\infty}m_i\sigma_i=0
		\right\},
		\]
		we may, for each fixed \(i\ge2\), choose
		\[
		\sigma_i=\frac{1}{m_i},\qquad
		\sigma_1=-\frac{1}{m_1},\qquad
		\sigma_k=0\quad \text{for }k\neq 1,i.
		\]
		This choice satisfies \(\sigma\in\bT_\rho\mathcal M\). Substituting it into the
		stationarity condition gives
		\[
		-\Phi_i-\beta\log\rho_i^\ast
		+
		\Phi_1+\beta\log\rho_1^\ast
		=
		0.
		\]
		Therefore,
		\[
		\Phi_i+\beta\log\rho_i^\ast
		=
		\Phi_1+\beta\log\rho_1^\ast,
		\qquad i\ge2.
		\]
		Equivalently,
		\[
		\rho_i^\ast
		=
		e^{(\Phi_1-\Phi_i)/\beta}\rho_1^\ast.
		\]
		Using the normalization condition \(\sum_i m_i\rho_i^\ast=1\), we obtain
		\[
		\rho_1^\ast
		=
		\frac{1}
		{\sum_{i=1}^{\infty}m_i e^{(\Phi_1-\Phi_i)/\beta}},
		\]
		and hence
		\[
		\rho_i^\ast
		=
		\frac{e^{(\Phi_1-\Phi_i)/\beta}}
		{\sum_{k=1}^{\infty}m_k e^{(\Phi_1-\Phi_k)/\beta}}
		=
		\frac{e^{-\Phi_i/\beta}}
		{\sum_{k=1}^{\infty}m_k e^{-\Phi_k/\beta}},
		\qquad i\in\bbn.
		\]
		\vspace{0.2cm}
		
			\noindent $\divideontimes$ (3): Before moving to the proof details, we summarize our proof strategy below.
			\begin{itemize}
				\item  First, we define relative energy
				\begin{align}\label{C-1-3}
					L(t)=\sum\limits_{i=1}^{\infty}m_i\frac{(\rho_i(t)-\rho_i^{*})^{2}}{\rho_i^{*}},	\quad \text{and show that }	\quad\frac{\di L(t)}{\di t}\le-\frac{\beta}{2}\frac{\left(\inf_i\rho_i(t)\right)}{\left(\sup_i \rho_i(t)\right)}\frac{\left(\inf_i \rho_i^{*}\right)}{\left(\sup_i \rho_i^{*}\right)}L(t).
				\end{align}
                In the above process, we require 
               \[ 0<\inf\limits_{i\in\bbn} \rho_i(t)\le \sup\limits_{i\in\bbn} \rho_i(t)<\infty.\]
					\item	Second, we show that there exist some $j_0(t)\le N_{0}$ ($N_{0}$ is independent of time) such that \[\rho_{j_0(t)}(t)>1-\delta>0 \quad (0<\delta<1).\]
						\item   Third, we prove that the solutions will stay in the invariant set $\mathcal{C}$ in $l_{\bbm}^2(\bbr)$ forever, where 
						\begin{align}\label{C-2-8}
							\mathcal{C}:=\left\{\rho\in \mathcal{M}~~\Big|~~0<C_1\le\inf\limits_{i\in\bbn} \rho_i\le \sup\limits_{i\in\bbn} \rho_i\le C_2<\infty\right\}.
						\end{align}
                        More precisely, we show that \[ \frac{\di}{\di t}\inf\limits_{i\in\bbn} \rho_i(t)>0\quad \text{when} \quad\inf\limits_{i\in\bbn} \rho_i(t)=C_1,\]\[~ \frac{\di}{\di t}\sup\limits_{i\in\bbn} \rho_i(t)<0\quad \text{when}\quad\sup\limits_{i\in\bbn} \rho_i(t)=C_2.\]
                        Here, $C_1$ and $C_2$ are defined in \eqref{C-1-14} and \eqref{C-1-15}.
						\item Finally, we combine the  Gr\"onwall inequality with \eqref{C-1-3} to derive the quantitative estimates.
			\end{itemize}
	With the above four steps, given $\rho_0\in {\rm Int}(\mathcal{M})\cap l^{\infty}(\bbr)$, there exists a unique solution 
	$\rho(t) : [0, \infty)\to {\rm Int}(\mathcal{M})\cap l^{\infty}(\bbr)$
to FPE \eqref{FP} with initial value $\rho_0$, and we can find a compact subset $\mathcal{C}$ of $l_{\bbm}^2(\bbr)$ such that $\left\{\rho(t): ~t\in[0,+\infty)\right\}\subset \mathcal{C}$.  For $t\in[0,+\infty)$, we have 
\begin{align}\label{GF}
	\frac{\di \mathcal{F}(\rho | \bbm)}{\di t}=\left\langle 	\mathrm{diff}\mathcal{F}(
\rho| \bbm) ~,~	\frac{\di \rho}{\di t}\right\rangle_{l_{\bbm}^2}=-	g_{\rho}^{\Phi}\left(\frac{\di \rho}{\di t}, \frac{\di \rho}{\di t}\right)\le0,
\end{align}
and therefore 
\begin{align*}
		\frac{\di \mathcal{F}(\rho| \bbm)}{\di t}=0\quad\text{if and only if }\quad \frac{\di \rho}{\di t}=0.
\end{align*}
This equivalents to $\rho=\rho^{*}$ by (2) of Theorem \ref{T2.1}.
Now, we provide our proof in the following propositions.
	
	\end{proof}

\begin{proposition}\label{P2.1}
	Let \( \mathcal{G} = (\mathcal{V}, \mathcal{E},\bbm) \) be an infinite graph with vertex set \( \mathcal{V} = \{a_1, \dots, a_N, \dots\} \), edge set \( \mathcal{E} \), weight function $\bbm$, a potential \( \Phi = (\Phi_i)_{i=1}^{\infty}\) defined on \( \mathcal{V} \), and constant \( \beta \geq 0 \).  For any $t\ge0$ such that \[ 0<\inf\limits_{i\in\bbn} \rho_i(t)\le \sup\limits_{i\in\bbn} \rho_i(t)<\infty,\] we have 
		\begin{align}\label{C-1-6}
\frac{\di L(t)}{\di t}\le-\frac{\beta}{2}\frac{\left(\inf_i\rho_i(t)\right)}{\left(\sup_i \rho_i(t)\right)}\frac{\left(\inf_i \rho_i^{*}\right)}{\left(\sup_i \rho_i^{*}\right)}L(t), \quad\text{where}\quad 	L(t)=\sum\limits_{i=1}^{\infty}m_i\frac{(\rho_i(t)-\rho_i^{*})^{2}}{\rho_i^{*}}.
	\end{align}
\end{proposition}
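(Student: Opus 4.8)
The plan is to differentiate the relative energy $L(t) = \sum_i m_i (\rho_i - \rho_i^*)^2/\rho_i^*$ directly along the flow, exploit the conservation of mass $\sum_i m_i \rho_i = 1$ to symmetrize, and then compare with a Dirichlet-type form that can be bounded below by $L(t)$ itself. First I would compute, using $\frac{\di}{\di t}\sum_i m_i \rho_i = 0$ and $\sum_i m_i \rho_i^* = 1$,
\begin{align*}
\frac{\di L(t)}{\di t} = 2\sum_{i=1}^{\infty} m_i \frac{\rho_i - \rho_i^*}{\rho_i^*}\,\frac{\di \rho_i}{\di t} = 2\sum_{i=1}^{\infty} m_i \frac{\rho_i}{\rho_i^*}\,\frac{\di \rho_i}{\di t},
\end{align*}
the last equality because $\sum_i m_i \frac{\di\rho_i}{\di t} = 0$. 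Now I substitute the right-hand side of the FPE \eqref{FP}, written compactly as $\frac{\di\rho_i}{\di t} = \sum_j m_j \tau_{ij}^\Phi(\rho)\big[(\Phi_j + \beta\log\rho_j) - (\Phi_i + \beta\log\rho_i)\big]$, and symmetrize the double sum over $i,j$ using $\tau_{ij}^\Phi = \tau_{ji}^\Phi$, which yields
\begin{align*}
\frac{\di L(t)}{\di t} = -\sum_{i,j} m_i m_j \tau_{ij}^\Phi(\rho)\Big[(\Phi_i + \beta\log\rho_i) - (\Phi_j + \beta\log\rho_j)\Big]\Big(\frac{\rho_i}{\rho_i^*} - \frac{\rho_j}{\rho_j^*}\Big).
\end{align*}

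The key algebraic observation is that, since $\rho_k^* \propto e^{-\Phi_k/\beta}$, one has $\Phi_k + \beta\log\rho_k = \Phi_k + \beta\log\rho_k^* + \beta\log(\rho_k/\rho_k^*) = \text{const} + \beta\log(\rho_k/\rho_k^*)$. Setting $u_k := \rho_k/\rho_k^*$, the bracketed difference becomes $\beta(\log u_i - \log u_j)$, so
\begin{align*}
\frac{\di L(t)}{\di t} = -\beta\sum_{i,j} m_i m_j \tau_{ij}^\Phi(\rho)\,(\log u_i - \log u_j)(u_i - u_j) \le 0,
\end{align*}
each summand being nonnegative since $\log$ is increasing. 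To turn this into the stated exponential decay I would bound each term from below: using $(\log u_i - \log u_j)(u_i - u_j) \ge \frac{(u_i - u_j)^2}{\max\{u_i, u_j\}}$ (logarithmic mean inequality), then $\max\{u_i,u_j\} \le \sup_k \rho_k/\inf_k\rho_k^*$, and $\tau_{ij}^\Phi(\rho) \ge \min\{\rho_i,\rho_j\} \ge \inf_k \rho_k$ from \eqref{B-2-2}, giving
\begin{align*}
\frac{\di L(t)}{\di t} \le -\beta\,\frac{(\inf_k\rho_k)(\inf_k\rho_k^*)}{\sup_k\rho_k}\sum_{i,j} m_i m_j (u_i - u_j)^2.
\end{align*}
Finally I would expand $\sum_{i,j} m_i m_j (u_i - u_j)^2 = 2\big(\sum_i m_i u_i^2 - (\sum_i m_i u_i)^2\big)$; since $\sum_i m_i u_i = \sum_i m_i \rho_i = 1$, this equals $2\sum_i m_i u_i^2 - 2 \ge 2\sum_i m_i \frac{(\rho_i - \rho_i^*)^2}{(\rho_i^*)^2} \cdot \text{(adjustment)}$ — more precisely $\sum_i m_i(u_i-1)^2 = \sum_i m_i u_i^2 - 1$, and $\sum_i m_i (u_i - 1)^2 = \sum_i m_i \frac{(\rho_i - \rho_i^*)^2}{(\rho_i^*)^2} \ge \frac{1}{\sup_k\rho_k^*} L(t)$. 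Assembling the constants produces exactly \eqref{C-1-6}.

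The main obstacle is analytic rather than algebraic: justifying that the formal term-by-term differentiation and the rearrangement/symmetrization of the doubly-infinite sums are legitimate. I would need the a priori bounds $0 < \inf_i \rho_i(t) \le \sup_i \rho_i(t) < \infty$ (which are hypotheses here) together with $\rho \in l^2_{\bbm}$ and $\Phi \in l^\infty$ to ensure absolute convergence of $\sum_{i,j} m_i m_j \tau_{ij}^\Phi |u_i - u_j|\,|\log u_i - \log u_j|$ and of the differentiated series, so that Fubini and Fatou-type arguments apply and the interchange of $\frac{\di}{\di t}$ with $\sum_i$ is valid; the weight summability $\sum_i m_i = 1$ is what makes these manipulations work. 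Once convergence is secured, the inequality chain above is routine.
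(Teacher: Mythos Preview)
Your overall strategy is exactly the paper's: write the FPE in the compact form $\dot\rho_i=\sum_j m_j\tau_{ij}^\Phi(\rho)[(\Phi_j+\beta\log\rho_j)-(\Phi_i+\beta\log\rho_i)]$, symmetrize using $\tau_{ij}^\Phi=\tau_{ji}^\Phi$, use the Gibbs relation $\Phi_k+\beta\log\rho_k=\mathrm{const}+\beta\log u_k$ with $u_k=\rho_k/\rho_k^{*}$, invoke the logarithmic-mean inequality, and finally compare the resulting Dirichlet form $\sum_{i,j}m_im_j(u_i-u_j)^2$ with $L(t)$. Everything up to and including the bound
\[
\frac{\di L}{\di t}\le -\beta\,\frac{(\inf_k\rho_k)(\inf_k\rho_k^{*})}{\sup_k\rho_k}\sum_{i,j}m_im_j(u_i-u_j)^2
\]
is correct and matches the paper.

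The genuine gap is in your last step. You claim $\sum_i m_i u_i=\sum_i m_i\rho_i=1$, but $u_i=\rho_i/\rho_i^{*}$, not $\rho_i$; in general $\sum_i m_i u_i\neq 1$. Consequently your identity $\sum_{i,j}m_im_j(u_i-u_j)^2=2\sum_i m_i(u_i-1)^2$ fails, and in fact the variance identity gives the \emph{wrong} direction: writing $\bar u=\sum_i m_i u_i$ one has
\[
\tfrac12\sum_{i,j}m_im_j(u_i-u_j)^2=\sum_i m_i(u_i-\bar u)^2=\sum_i m_i(u_i-1)^2-(\bar u-1)^2\le \sum_i m_i(u_i-1)^2,
\]
so you cannot bound the Dirichlet form below by $\sum_i m_i(u_i-1)^2$ this way. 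What is true is $\sum_i m_i\rho_i^{*}u_i=\sum_i m_i\rho_i=1$, i.e.\ the orthogonality lives in the $\rho^{*}$-weighted inner product. The paper exploits exactly this: it first inserts the weight $\rho_i^{*}$,
\[
\sum_{i,j}m_im_j(u_i-u_j)^2\ge \frac{1}{\sup_k\rho_k^{*}}\sum_{i,j}m_im_j\rho_i^{*}(u_i-u_j)^2,
\]
and then expands using $\sum_i m_i\rho_i^{*}(u_i-1)=0$, which kills the cross term and leaves $\sum_i m_i\rho_i^{*}(u_i-1)^2+\sum_j m_j(u_j-1)^2\ge L(t)$. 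With this correction your argument assembles into exactly \eqref{C-1-6}.
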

\begin{proof}
First, we have 
\begin{align*}
		\frac{\di L(t)}{\di t}&=2\sum\limits_{i=1}^{\infty}m_i\frac{\rho_i(t)-\rho_i^{*}}{\rho_i^{*}}\frac{\di \rho_i(t)}{\di t}\\
		&=2\sum\limits_{i=1}^{\infty}m_i\frac{\rho_i(t)-\rho_i^{*}}{\rho_i^{*}}\Bigg[
\sum_{ \Phi_j > \Phi_i} m_j\left[ (\Phi_j + \beta \log \rho_j) -(\Phi_i + \beta \log \rho_i) \right] \rho_j \\
&\quad+ \sum_{ \Phi_j < \Phi_i} m_j\left[ (\Phi_j + \beta \log \rho_j) - (\Phi_i + \beta \log \rho_i) \right] \rho_i + \sum_{\Phi_j = \Phi_i}m_j \beta(\rho_j - \rho_i)\Bigg].	
\end{align*}
For simplicity of notation, throughout this proof we set
\[
\Gamma_i(t):=\frac{\rho_i(t)-\rho_i^\ast}{\rho_i^\ast},
\qquad i\in\bbn.
\]
Note that $\Phi_j-\Phi_i=\beta \log \rho_i^{*} -\beta \log \rho_j^{*}$ and $\rho_i^{*}=\rho_j^{*}$ when $\Phi_j=\Phi_i$.
Then, using this equality, we obtain
\begin{align*}
	\frac{\di L(t)}{\di t}&=2\sum\limits_{i=1}^{\infty}m_i\frac{\rho_i(t)-\rho_i^{*}}{\rho_i^{*}}\Bigg[
	\sum_{ \Phi_j > \Phi_i} m_j\left[ \beta \log \rho_i^{*} -\beta \log \rho_j^{*}+ \beta \log \rho_j- \beta \log \rho_i\right] \rho_j \\
	&\quad+ \sum_{ \Phi_j < \Phi_i} m_j\left[ \beta \log \rho_i^{*} -\beta \log \rho_j^{*}+ \beta \log \rho_j- \beta \log \rho_i \right] \rho_i + \sum_{\Phi_j = \Phi_i}m_j \beta\left(\frac{\rho_j}{\rho_j^{*}} - \frac{\rho_i}{\rho_i^{*}}\right)\frac{\rho_j^{*}+\rho_i^{*}}{2}\Bigg]\\
	&=2\sum\limits_{i=1}^{\infty}m_i\frac{\rho_i(t)-\rho_i^{*}}{\rho_i^{*}}\Bigg[
	\sum_{ \Phi_j > \Phi_i} m_j\left[ \beta \log \frac{\rho_j}{\rho_j^{*}}- \beta \log \frac{\rho_i}{\rho_i^{*}}\right] \rho_j \\
	&\quad+ \sum_{ \Phi_j < \Phi_i} m_j\left[ \beta \log \frac{\rho_j}{\rho_j^{*}}- \beta \log \frac{\rho_i}{\rho_i^{*}}\right] \rho_i + \sum_{\Phi_j = \Phi_i}m_j \beta\left(\frac{\rho_j}{\rho_j^{*}} - \frac{\rho_i}{\rho_i^{*}}\right)\frac{\rho_j^{*}+\rho_i^{*}}{2}\Bigg]\\
	&=2\sum_{i=1}^{\infty}m_i\Gamma_i(t)\Bigg[
	\sum_{\Phi_j>\Phi_i}m_j
	\left[
	\beta\log(1+\Gamma_j(t))
	-
	\beta\log(1+\Gamma_i(t))
	\right]\rho_j \\
	&\quad+
	\sum_{\Phi_j<\Phi_i}m_j
	\left[
	\beta\log(1+\Gamma_j(t))
	-
	\beta\log(1+\Gamma_i(t))
	\right]\rho_i +
	\sum_{\Phi_j=\Phi_i}m_j\beta
	\left(\Gamma_j(t)-\Gamma_i(t)\right)
	\frac{\rho_j^\ast+\rho_i^\ast}{2}
	\Bigg].
\end{align*} 
We combine the above equality with 
\begin{align*}
\min\left\{\frac{1}{1+\Gamma_j(t)},\frac{1}{1+\Gamma_i(t)}\right\}\le\frac{\log(1+\Gamma_j(t))-\log(1+\Gamma_i(t))}{\Gamma_j(t)-\Gamma_i(t)}\le \max\left\{\frac{1}{1+\Gamma_j(t)},\frac{1}{1+\Gamma_i(t)}\right\}
\end{align*}
and \(1+\Gamma_j(t)=\frac{\rho_j(t)}{\rho_j^{*}}\) to see 
\begin{align*}
		\frac{\di L(t)}{\di t}
		&\le -2\beta\sum\limits_{i=1}^{\infty}m_i
		\sum_{ \Phi_j > \Phi_i} m_j\left(\Gamma_j(t)-\Gamma_i(t)\right)^2 \min\left\{\frac{1}{1+\Gamma_j(t)},\frac{1}{1+\Gamma_i(t)}\right\}\rho_j \\
		&\quad- \beta\sum\limits_{i=1}^{\infty}m_i\sum_{\Phi_j = \Phi_i}m_j \left(\Gamma_j(t)-\Gamma_i(t)\right)^2\frac{\rho_j^{*}+\rho_i^{*}}{2}\\
&=-2\beta\sum\limits_{i=1}^{\infty}m_i
		\sum_{ \Phi_j > \Phi_i} m_j\left(\Gamma_j(t)-\Gamma_i(t)\right)^2 \min\left\{\frac{\rho_j^{*}}{\rho_j(t)},\frac{\rho_i^{*}}{\rho_i(t)}\right\}\rho_j \\
		&\quad-\beta \sum\limits_{i=1}^{\infty}m_i\sum_{\Phi_j = \Phi_i}m_j \left(\Gamma_j(t)-\Gamma_i(t)\right)^2\frac{\rho_j^{*}+\rho_i^{*}}{2}\\
&\le-\beta\frac{\left(\inf_i\rho_i(t)\right)}{\left(\sup_i \rho_i(t)\right)}\left(\inf_i \rho_i^{*}\right) \sum\limits_{i=1}^{\infty}m_i\sum_{\Phi_j \ge \Phi_i}m_j\left(\Gamma_j(t)-\Gamma_i(t)\right)^2 \\
		&\le-\frac{\beta}{2}\frac{\left(\inf_i\rho_i(t)\right)}{\left(\sup_i \rho_i(t)\right)}\left(\inf_i \rho_i^{*}\right) \sum\limits_{i=1}^{\infty}m_i\sum\limits_{j=1}^{\infty}m_j\left(\Gamma_j(t)-\Gamma_i(t)\right)^2.	
\end{align*}
Furthermore, we have
\begin{align*}
&\sum\limits_{i=1}^{\infty}m_i\sum\limits_{j=1}^{\infty}m_j\left(\Gamma_j(t)-\Gamma_i(t)\right)^2\ge\frac{1}{\sup\rho_i^{*}}	\sum\limits_{i=1}^{\infty}m_i\sum\limits_{j=1}^{\infty}m_j\left(\Gamma_j(t)-\Gamma_i(t)\right)^2\rho_i^{*}\\
	&\qquad	=\frac{1}{\sup\rho_i^{*}}\Bigg[	\sum\limits_{i=1}^{\infty}m_i\Gamma_i(t)^2\rho_i^{*}+\sum\limits_{j=1}^{\infty}m_j\Gamma_j(t)^2\rho_i^{*}+2\left(\sum\limits_{j=1}^{\infty}m_j\Gamma_j(t)\right)\left(\sum\limits_{i=1}^{\infty}m_i\Gamma_i(t)\rho_i^{*}\right)\Bigg]\\
&\qquad	\ge \frac{1}{\sup\rho_i^{*}}\Bigg[	\sum\limits_{i=1}^{\infty}m_i\Gamma_i(t)^2\rho_i^{*}+2\left(\sum\limits_{j=1}^{\infty}m_j\Gamma_j(t)\right)\left(\sum\limits_{i=1}^{\infty}m_i\Gamma_i(t)\rho_i^{*}\right)\Bigg]\\
&\qquad= \frac{1}{\sup\rho_i^{*}}L(t).
\end{align*}
Here, we used 
\[\sum\limits_{i=1}^{\infty}m_i\Gamma_i(t)^2\rho_i^{*}=\sum\limits_{i=1}^{\infty}m_i\frac{\left(\rho_i(t)-\rho_i^{*}\right)^2}{\rho_i^{*}}=L(t)  \]
and 
\[\sum\limits_{i=1}^{\infty}m_i\Gamma_i(t)\rho_i^{*}=\sum\limits_{i=1}^{\infty}m_i\rho_i(t)-\sum\limits_{i=1}^{\infty}m_i\rho_i^{*}=0.\]
Finally, we obtain 
\begin{align*}
	\begin{aligned}
		\frac{\di L(t)}{\di t}
		&\le-\frac{\beta}{2}\frac{\left(\inf_i\rho_i(t)\right)}{\left(\sup_i \rho_i(t)\right)}\left(\inf_i \rho_i^{*}\right) \sum\limits_{i=1}^{\infty}m_i\sum\limits_{j=1}^{\infty}m_j\left(\Gamma_j(t)-\Gamma_i(t)\right)^2\\
		&\le-\frac{\beta}{2}\frac{\left(\inf_i\rho_i(t)\right)}{\left(\sup_i \rho_i(t)\right)}\frac{\left(\inf_i \rho_i^{*}\right)}{\left(\sup_i \rho_i^{*}\right)}L(t).	\end{aligned}
\end{align*}
The proof is completed.
\end{proof}
Next, we prove the second step in the proof of (3) in Theorem \ref{T2.1}.
\begin{proposition}\label{P2.2}
	Let \( \mathcal{G} = (\mathcal{V}, \mathcal{E},\bbm) \) be an infinite graph with vertex set \( \mathcal{V} = \{a_1, \dots, a_N, \dots\} \), edge set \( \mathcal{E} \), weight function $\bbm$, a potential \( \Phi = (\Phi_i)_{i=1}^{\infty}\) defined on \( \mathcal{V} \), and constant \( \beta \geq 0 \). Define \[\tau=\sup\left\{t~\Big|~\forall~ s\in[0,t),~ 0<C_1\le\inf\limits_{i\in\bbn} \rho_i(s)\le \sup\limits_{i\in\bbn} \rho_i(s)\le C_2<\infty \right\},\]
   where $C_1$ and $C_2$ are defined in \eqref{C-1-14} and \eqref{C-1-15}. Then there exists $C_0$ (independent of $\tau$) dependent only on $\rho^{*}$ and $\rho_0$ such that 
		\begin{align}\label{C-1-7}
		\sup\limits_{0\le t\le \tau}	\|\rho(t)\|_{l_{\rm \bbm}^2}^2\le C_0.
		\end{align}
\end{proposition}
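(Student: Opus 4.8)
The plan is to deduce \eqref{C-1-7} directly from the relative–energy dissipation already proved in Proposition \ref{P2.1}, exploiting that the Gibbs state $\rho^{*}$ is bounded above and below.

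First I would record two elementary facts. Since $\Phi\in l^{\infty}(\bbr)$ and $\beta>0$, the explicit formula for $\rho^{*}$ gives $0<\inf_i\rho_i^{*}\le\sup_i\rho_i^{*}<\infty$, and consequently $\|\rho^{*}\|_{l_{\bbm}^{2}}^{2}=\sum_i m_i(\rho_i^{*})^{2}\le(\sup_i\rho_i^{*})^{2}$, using $\sum_i m_i=1$. Likewise $\rho_0\in l^{\infty}(\bbr)$ together with $\sum_i m_i=1$ forces $\rho_0\in l_{\bbm}^{2}(\bbr)$, so that
\[
L(0)=\sum_{i=1}^{\infty}m_i\frac{(\rho_{0,i}-\rho_i^{*})^{2}}{\rho_i^{*}}\le\frac{1}{\inf_i\rho_i^{*}}\,\|\rho_0-\rho^{*}\|_{l_{\bbm}^{2}}^{2}<\infty .
\]
Thus $L(0)$ is a finite quantity depending only on $\rho_0$ and $\rho^{*}$.

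Next, by the very definition of $\tau$, for every $s\in[0,\tau)$ we have $0<C_1\le\inf_i\rho_i(s)\le\sup_i\rho_i(s)\le C_2<\infty$; in particular $0<\inf_i\rho_i(s)\le\sup_i\rho_i(s)<\infty$, so the hypotheses of Proposition \ref{P2.1} hold at every such $s$. Hence $\frac{\di L}{\di t}\le 0$ on $[0,\tau)$, and therefore $L(t)\le L(0)$ for all $t\in[0,\tau)$. Since $t\mapsto\rho(t)$ is continuous into $l_{\bbm}^{2}(\bbr)$ and $\rho^{*}$ is bounded below, $L$ is continuous, so $L(t)\le L(0)$ extends to $t=\tau$. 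It is precisely this monotonicity that makes the final constant independent of $\tau$: an argument using only the ODE without the dissipation estimate would control $\|\rho(t)\|_{l_{\bbm}^{2}}$ only by a quantity growing in $t$.

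Finally I would convert the control of $L$ into control of the norm. From $L(t)=\sum_i m_i\frac{(\rho_i(t)-\rho_i^{*})^{2}}{\rho_i^{*}}\ge\frac{1}{\sup_i\rho_i^{*}}\|\rho(t)-\rho^{*}\|_{l_{\bbm}^{2}}^{2}$ and $L(t)\le L(0)$ one gets $\|\rho(t)-\rho^{*}\|_{l_{\bbm}^{2}}^{2}\le(\sup_i\rho_i^{*})L(0)$ for all $t\in[0,\tau]$, and then the triangle inequality yields
\[
\|\rho(t)\|_{l_{\bbm}^{2}}\le\|\rho(t)-\rho^{*}\|_{l_{\bbm}^{2}}+\|\rho^{*}\|_{l_{\bbm}^{2}}\le\sqrt{(\sup_i\rho_i^{*})\,L(0)}+\sup_i\rho_i^{*}=:\sqrt{C_0},
\]
so that $\sup_{0\le t\le\tau}\|\rho(t)\|_{l_{\bbm}^{2}}^{2}\le C_0$, with $C_0$ depending only on $\rho^{*}$ (through $\inf_i\rho_i^{*}$, $\sup_i\rho_i^{*}$) and on $\rho_0$ (through $\|\rho_0-\rho^{*}\|_{l_{\bbm}^{2}}$), hence independent of $\tau$. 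This is \eqref{C-1-7}. The computation is short; the only points needing a little care are checking that Proposition \ref{P2.1} applies throughout $[0,\tau)$ — immediate from the definition of $\tau$ — and the passage to the closed endpoint $t=\tau$, which follows from continuity of the flow in $l_{\bbm}^{2}$. The conceptual content is simply that the relative–energy monotonicity upgrades the \emph{a priori} pointwise two-sided bound $C_1\le\rho_i\le C_2$ into a \emph{uniform-in-time}, $\tau$-independent bound in $l_{\bbm}^{2}$.
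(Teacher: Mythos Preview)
Your proof is correct and follows essentially the same route as the paper: both arguments invoke Proposition~\ref{P2.1} on $[0,\tau)$ to obtain $L(t)\le L(0)$, then convert this to an $l_{\bbm}^{2}$ bound via the two-sided comparison $\tfrac{1}{\sup_i\rho_i^{*}}\|\rho(t)-\rho^{*}\|_{l_{\bbm}^{2}}^{2}\le L(t)$ and the triangle inequality (the paper uses $\|a+b\|^{2}\le 2\|a\|^{2}+2\|b\|^{2}$, you use $\|a+b\|\le\|a\|+\|b\|$, which is cosmetic). Your explicit treatment of the endpoint $t=\tau$ via continuity is a small refinement the paper leaves implicit.
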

\begin{proof}
	Note that $0<\inf_i \rho_i^{*}\le \sup_i \rho_i^{*}<\infty$. Therefore, we have 
	\begin{align}\label{C-1-8}
	 \frac{1}{\sup_i \rho_i^{*}}\|\rho(t)-\rho^{*}\|_{l_{\rm \bbm}^2}^2\le	L(t)\le \frac{1}{\inf_i \rho_i^{*}}\|\rho(t)-\rho^{*}\|_{l_{\rm \bbm}^2}^2.
	\end{align}
	Moreover, we combine Proposition \ref{P2.1} to derive
	\begin{align*}
		\begin{aligned}
		\|\rho(t)\|_{l_{\rm \bbm}^2}^2&\le 2\|\rho(t)-\rho^{*}\|_{l_{\rm \bbm}^2}^2+2\|\rho^{*}\|_{l_{\rm \bbm}^2}^2\le2\left(\sup_i \rho_i^{*}\right)	L(t)+2\|\rho^{*}\|_{l_{\rm \bbm}^2}^2\\
		&\le 2\left(\sup_i \rho_i^{*}\right)	L(0)+2\|\rho^{*}\|_{l_{\rm \bbm}^2}^2\\
        &=2\left(\sup_i \rho_i^{*}\right)	\sum\limits_{i=1}^{\infty}m_i\frac{(\rho_i(0)-\rho_i^{*})^{2}}{\rho_i^{*}}+2\|\rho^{*}\|_{l_{\rm \bbm}^2}^2\\
        &\le4\left(\frac{\sup_i \rho_i^{*}}{\inf_i \rho_i^{*}}\right)\left(\sup_i\rho_i(0)\right)^2+4\sup_i \rho_i^{*}+2\|\rho^{*}\|_{l_{\rm \bbm}^2}^2=:C_0.
				\end{aligned}
		 \end{align*}
\end{proof}
\begin{remark}\label{R2.1}
For any small $1>\delta>0$, by Proposition \ref{P2.2}, there exists sufficiently large $N_{0}$ such that 
\begin{align*}
	\sum\limits_{i=N_{0}+1}^{\infty} m_i\rho_i\le \left(	\sum\limits_{i=N_{0}+1}^{\infty} m_i\right)^{\frac{1}{2}}	\left(\sum\limits_{i=N_{0}+1}^{\infty} m_i|\rho_i|^2\right)^{\frac{1}{2}}\le \left(	\sum\limits_{i=N_{0}+1}^{\infty} m_i\right)^{\frac{1}{2}} C_0<\delta.
\end{align*}
This implies 
\begin{align}\label{C-1-9}
	\sum\limits_{i=1}^{N_{0}} m_i\rho_i\ge 1-\delta.
	\end{align}
Moreover, for any $0\le t\le\tau$, we obtain that there exists some time-varying index $j_0(t)\le N_{0}$ such that \[\rho_{j_0(t)}(t)>1-\delta>0.\]
Otherwise, we have $\rho_{i}(t)\le1-\delta$ for all $i\le N_{0}$, 
\[\sum\limits_{i=1}^{N_{0}} m_i\rho_i\le\left( \sum\limits_{i=1}^{N_{0}} m_i\right)\left(1-\delta\right)<1-\delta.\]
This is contradictory to \eqref{C-1-9}. Note that the existence of $N_0$ depends only on $\delta$ and $C_0$, this fact will be used in the proof of Theorem \ref{T4.1} later.
\end{remark}
Before we prove the third step, we provide the following lemma to estimate the derivative of the infimum and supremum of solutions, which is similar to [Theorem 3.3. \cite{H-W-X-2025}].
\begin{lemma}\label{L2.1}
	Let \(\{f_n \}\) be a sequence of real-valued functions defined on $[0,\infty )$ which has the pointwise limit $f$ and satisfies the following conditions:
	\begin{enumerate}
		\item $f_n$ is Lipschitz continuous, and $\frac{\di}{\di t} f_n(t)$ is uniformly bounded with respect to $t$ and $n$, i.e., \\
		\begin{equation*}
			\exists~ C>0.\quad s.t. \quad \left|\frac{\di}{\di t} f_n(t)\right| < C, \quad \forall ~n\in \bbn, \ a.e. \ t\ge0 .
		\end{equation*}
		\item $f$ is almost everywhere differentiable.
	\end{enumerate} 
	Then, the following assertion holds:
	\begin{align}\label{C-1-10}
		\liminf_{n\rightarrow \infty}\frac{\di}{\di t}f_n(t)\le \frac{\di}{\di t} f(t) \le \limsup_{n\rightarrow \infty}\frac{\di}{\di t}f_n(t)\quad a.e.\ t\ge 0.\end{align}
\end{lemma}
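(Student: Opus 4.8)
The plan is to work pointwise at a fixed time $t_0$ where $f$ is differentiable, and to compare difference quotients of $f$ with those of $f_n$, exploiting that the pointwise convergence $f_n\to f$ lets us pass to the limit in difference quotients over any \emph{fixed} increment $h$. First I would fix $t_0\ge 0$ at which $f'(t_0)$ exists, and fix $h>0$. For each $n$, the Lipschitz continuity of $f_n$ (equivalently, the a.e. bound $|\frac{\di}{\di t}f_n|<C$) together with the fundamental theorem of calculus gives
\begin{align*}
\frac{f_n(t_0+h)-f_n(t_0)}{h}=\frac{1}{h}\int_{t_0}^{t_0+h}\frac{\di}{\di t}f_n(s)\,\di s\le \sup_{s\in[t_0,t_0+h]}\frac{\di}{\di t}f_n(s),
\end{align*}
and hence $\frac{f_n(t_0+h)-f_n(t_0)}{h}\le \operatorname*{ess\,sup}_{s\ge 0}\frac{\di}{\di t}f_n(s)$; more usefully, taking $\liminf_{n\to\infty}$ on the left and using $f_n(t_0\pm h)\to f(t_0\pm h)$ yields
\begin{align*}
\frac{f(t_0+h)-f(t_0)}{h}=\lim_{n\to\infty}\frac{f_n(t_0+h)-f_n(t_0)}{h}.
\end{align*}
This is the key observation: the difference quotient of the limit over a fixed step $h$ equals the limit of the difference quotients.

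Next I would bound each $f_n$'s difference quotient by its derivative values. Since $\frac{\di}{\di t}f_n$ exists a.e.\ and is bounded by $C$, for a.e.\ $h>0$ one has, by the mean-value property of the integral, that $\frac{1}{h}\int_{t_0}^{t_0+h}\frac{\di}{\di t}f_n(s)\,\di s$ lies between $\inf$ and $\sup$ of $\frac{\di}{\di t}f_n$ on $[t_0,t_0+h]$. To get the clean statement \eqref{C-1-10} I would instead argue: for any $\varepsilon>0$ and $n$, there is a set of positive measure of $s$ near $t_0$ (on the right) with $\frac{\di}{\di t}f_n(s)\le \limsup_{m\to\infty}\frac{\di}{\di t}f_m(t_0)+\varepsilon$ is \emph{not} quite available, so the honest route is: combine the identity above with
\begin{align*}
\frac{f(t_0+h)-f(t_0)}{h}=\lim_{n\to\infty}\frac{1}{h}\int_{t_0}^{t_0+h}\frac{\di}{\di t}f_n(s)\,\di s\le \limsup_{n\to\infty}\Big(\sup_{s\in[t_0,t_0+h]}\frac{\di}{\di t}f_n(s)\Big),
\end{align*}
then let $h\downarrow 0$ on the left to obtain $f'(t_0)\le \limsup_{n\to\infty}\frac{\di}{\di t}f_n(t_0)$, provided the interchange of $h\downarrow0$ and $\limsup_{n}$ is justified by the uniform bound $C$ via a diagonal/Arzelà argument. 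The lower bound $f'(t_0)\ge\liminf_n \frac{\di}{\di t}f_n(t_0)$ follows symmetrically using left difference quotients $h\uparrow 0$.

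The main obstacle is precisely the interchange of the two limits ($n\to\infty$ and $h\to 0$): pointwise convergence alone does not control $\frac{\di}{\di t}f_n$ near $t_0$, and the essential supremum over a shrinking interval need not converge to the pointwise derivative value at $t_0$ for each fixed $n$. I expect to resolve this by using the uniform Lipschitz bound to invoke Arzelà--Ascoli: a subsequence of $\{f_n\}$ converges uniformly on compacts to $f$, so $f$ is itself $C$-Lipschitz; then one works with a \emph{single} well-chosen sequence $h_k\downarrow 0$ and extracts, for each $k$, an index $n_k$ with $\big|\frac{f_{n_k}(t_0+h_k)-f_{n_k}(t_0)}{h_k}-\frac{f(t_0+h_k)-f(t_0)}{h_k}\big|<h_k$ and $\sup_{[t_0,t_0+h_k]}\frac{\di}{\di t}f_{n_k}\le \limsup_n\frac{\di}{\di t}f_n(t_0)+h_k$ (the latter only if such $n_k$ exists; if not, the inequality in \eqref{C-1-10} is vacuous on that side). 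Passing $k\to\infty$ along this diagonal sequence yields $f'(t_0)\le\limsup_n\frac{\di}{\di t}f_n(t_0)$. Since the exceptional set of $t_0$ (where $f'$ fails to exist) has measure zero by hypothesis (2), the inequality \eqref{C-1-10} holds a.e., completing the proof.
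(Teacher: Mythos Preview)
Your proposal has a genuine gap at exactly the point you flag as ``the main obstacle'': the interchange of $n\to\infty$ and $h\to 0$. Your diagonal extraction asks, for each $h_k$, for an index $n_k$ that simultaneously satisfies (a) the difference-quotient approximation and (b) $\sup_{[t_0,t_0+h_k]}f_{n_k}'\le \limsup_n f_n'(t_0)+h_k$. Condition (a) forces $n_k\to\infty$, but for large $n$ there is no reason (b) should hold: the essential supremum of $f_n'$ over $[t_0,t_0+h_k]$ can sit well above $\limsup_m f_m'(t_0)$ for every large $n$. Your fallback ``if not, the inequality is vacuous on that side'' is incorrect, since $\limsup_n f_n'(t_0)\le C<\infty$ by hypothesis, so the inequality is never vacuous. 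The Arzel\`a--Ascoli step buys you that $f$ is $C$-Lipschitz, but gives no control on the pointwise behaviour of $f_n'$ near $t_0$, which is what (b) needs.

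The paper avoids this entirely by never passing to a supremum. Instead it keeps the \emph{average} $\frac{1}{h}\int_{t_0-h}^{t_0} f_n'(s)\,\di s$ and uses two tools you do not invoke: (i) the reverse Fatou lemma (legitimate because $|f_n'|\le C$), which pushes the $\limsup_n$ inside the integral to get $\limsup_n \frac{1}{h}\int f_n'\le \frac{1}{h}\int \limsup_n f_n'$; and (ii) the Lebesgue differentiation theorem applied to the bounded measurable function $g(t):=\limsup_n f_n'(t)$, which gives $\lim_{h\to 0}\frac{1}{h}\int_{t_0-h}^{t_0} g = g(t_0)$ for a.e.\ $t_0$. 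Chaining these yields $f'(t_0)\le \limsup_n f_n'(t_0)$ directly, with the exceptional set being the union of the non-Lebesgue points of $g$ and the non-differentiability points of $f$. Replacing your $\sup$ bound by the reverse Fatou step is the missing idea.
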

\begin{proof}
	Clearly, $\limsup_{n\rightarrow \infty}\frac{\di}{\di t}f_n(t)$ is also uniformly bounded by the same constant $C$ in condition (1). Hence, both $\frac{\di}{\di t}f_n(t)$ and $\limsup_{n\rightarrow \infty}\frac{\di}{\di t}f_n(t)$ are locally integrable, and the set of non-Lebesgue points has zero measure. More precisely, we have
	\begin{equation}\label{Lebesgue}
		\begin{cases}
			\displaystyle\lim_{h\rightarrow 0} \frac{1}{h} \int_{t-h}^{t}\frac{\di}{\di s}{f_n(s)}\di s = \frac{\di}{\di t}{f_n (t)},\vspace{6pt}\\
			\displaystyle\lim_{h\rightarrow 0} \frac{1}{h} \int_{t-h}^{t}\limsup_{n\rightarrow \infty}\frac{\di }{\di s}{f_n(s)}\di s = \limsup_{n \rightarrow \infty}\frac{\di}{\di t}{f_n(t)}, \quad a.e. \ t> 0.
		\end{cases}
	\end{equation}
	Suppose that the contrary holds, then there exists a positive measure set $\mathcal{S}\subset [0, \infty)$ such that
	\[\frac{\di}{\di t} f(t) > \limsup_{n\rightarrow \infty}\frac{\di}{\di t}f_n(t), \quad \forall ~t \in \mathcal{S}.\]
	Now, we fix $t_0\in \mathcal{S}$ such that $f, f_n$ are differentiable at $t_0$ and satisfies (\ref{Lebesgue}). Then, we have
	\begin{align*}
		\frac{\di}{\di t} f(t_0) &= \lim_{h\rightarrow 0}\lim_{n \rightarrow \infty }\frac{f_n(t_0-h) - f(t_0)}{h} = \lim_{h\rightarrow 0}\lim_{n  \rightarrow \infty } \frac{1}{h} {\int_{t_0 - h}^{t_0} \frac{\di}{\di s}f_n(s)\di s} \\
		&= \lim_{h\rightarrow 0}\limsup_{n \rightarrow \infty }\frac{1}{h}{\int_{t_0 - h}^{t_0} \frac{\di}{\di s}f_n(s)\di s} \le \lim_{h\rightarrow 0} \frac{1}{h}{\int_{t_0 - h}^{t_0} \limsup_{n \rightarrow \infty }\frac{\di}{\di s}f_n(s)\di s} \\
		&=\limsup_{n \rightarrow \infty} \frac{\di}{\di t}{f_n(t_0)}.
	\end{align*}
This yields a contradiction. In the inequality in the second-to-last line, we used the reverse Fatou lemma. This concludes the proof of the right side of \eqref{C-1-10}. The proof of the left side of \eqref{C-1-10} is similar to the right side; we omit the details here.
\end{proof}
Now, we show that the solution will stay in the invariant set $\mathcal{C}$ of $l_{\bbm}^2(\bbr)$. We fix $\delta=\frac{1}{2}$ in Remark \ref{R2.1} and obtain $N_0$ such that there exists some density $j_0(t)\le N_{0}$ such that \begin{equation}\label{P_inf}\rho_{j_0(t)}(t)>1-\delta=\frac{1}{2}.\end{equation}
To simplify the notation, we drop the dependence of $j_0(t)$ on $t$ and denote it by $j_0$. And we denote \begin{align}\label{M_N0}M_{N_0}:=\min\limits_{i\in[N_0]}m_i,\end{align} where the finite index set $[N_0]:=\left\{1,2,...,N_0\right\}$. Now, we define the constants $C_1$ and $C_2$ as follows.
\begin{align}\label{C-1-14}
	C_1:=\min\left\{\frac{1}{2}\inf\limits_{i\in\bbn}\rho_{i}(0),~~\frac{e^{-\frac{4\|\Phi\|_{l^\infty}}{M_{N_0}\beta}}}{4},~~ \frac{\beta M_{N_0}}{4(2\|\Phi\|_{l^\infty}+\beta)}\right\},
\end{align}
and
\begin{align}\label{C-1-15}
	C_2:=\max\left\{2\sup\limits_{i\in\bbn}\rho_{i}(0),\quad \frac{e^{\frac{2\|\Phi\|_{l^\infty}}{\beta C_1M_{N_0}}}}{2M_{N_0}},~~\frac{2\|\Phi\|_{l^\infty}+2\beta}{\beta M_{N_0}},~\frac{2}{M_{N_0}}\right\}.
\end{align}
\begin{proposition}\label{P2.3}
		Let \( \mathcal{G} = (\mathcal{V}, \mathcal{E},\bbm) \) be an infinite graph with vertex set \( \mathcal{V} = \{a_1, \dots, a_N, \dots\} \), edge set \( \mathcal{E} \), weight function $\bbm$, a potential \( \Phi = (\Phi_i)_{i=1}^{\infty}\in l^{\infty}(\bbr) \) defined on \( \mathcal{V} \), and constant \( \beta> 0 \). Suppose that initial value \( \rho(0) = \rho_0 \in {\rm Int} ( \mathcal{M} )\cap l^{\infty}(\bbr) \), then we obtain the solutions will stay in the invariant set $\mathcal{C}$ of $l_{\bbm}^2(\bbr)$, where 
		\begin{align}\label{C-1-12}
			\mathcal{C}:=\left\{\rho\in \mathcal{M}~~\Big|~~0<C_1\le\inf\limits_{i\in\bbn} \rho_i\le \sup\limits_{i\in\bbn} \rho_i\le C_2<\infty\right\}.
		\end{align}
\end{proposition}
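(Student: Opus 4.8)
The plan is a continuity (invariant–region) argument resting on the a priori information already assembled. Since $\rho_0\in{\rm Int}(\mathcal M)\cap l^{\infty}(\bbr)$, the definitions \eqref{C-1-14}--\eqref{C-1-15} force $C_1\le\tfrac12\inf_i\rho_i(0)$ and $C_2\ge 2\sup_i\rho_i(0)<\infty$, so $\rho_0$ lies strictly inside $\mathcal C$, and $C_1>0$. Recall the exit time $\tau$ from Proposition~\ref{P2.2}: for $t<\tau$ the hypotheses of Propositions~\ref{P2.1}--\ref{P2.2} and Remark~\ref{R2.1} all hold, and by continuity of $t\mapsto\rho(t)$ in $l_{\bbm}^2(\bbr)$, if $\tau<\infty$ then at $t=\tau$ one must have $\inf_i\rho_i(\tau)=C_1$ or $\sup_i\rho_i(\tau)=C_2$. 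I would show both possibilities lead to a contradiction: near the lower boundary $\inf_i\rho_i$ is strictly increasing, and near the upper boundary $\sup_i\rho_i$ is strictly decreasing, so the flow never reaches $\partial\mathcal C$; hence $\tau=\infty$ and $\rho(t)\in\mathcal C$ for all $t\ge0$, which is \eqref{C-1-12}.

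First I would reduce this to a pointwise statement about single components. On $[0,\tau]$ the two–sided bound $C_1\le\rho_i(t)\le C_2$ keeps $|\log\rho_i(t)|$ and $\tau_{ij}^{\Phi}(\rho(t))$ uniformly bounded, and together with $\Phi\in l^{\infty}(\bbr)$, $\sum_j m_j=1$ and $\sum_j m_j\rho_j=1$, the right–hand side of \eqref{FP} is bounded uniformly in $i$ and $t$ (in the first sum one absorbs $\rho_j$ against $\sum_j m_j\rho_j=1$, the other two sums carry at most a factor $C_2$). Hence $m(t):=\inf_i\rho_i(t)$ and $M(t):=\sup_i\rho_i(t)$ are Lipschitz on $[0,\tau]$; writing $m(t)=\lim_n\min_{i\le n}\rho_i(t)$ and $M(t)=\lim_n\max_{i\le n}\rho_i(t)$ as monotone limits of finite min/max and applying Lemma~\ref{L2.1}, together with the elementary fact that at a differentiability point of a finite minimum every component attaining it has the same derivative, it suffices to find $\eta\in(0,C_1)$ and $c>0$ such that, for every $k\in\bbn$ and every $t\in[0,\tau]$,
\[
\rho_k(t)\le C_1+\eta \ \Longrightarrow\ \frac{\di\rho_k}{\di t}(t)\ge c,
\qquad
\rho_k(t)\ge C_2-\eta \ \Longrightarrow\ \frac{\di\rho_k}{\di t}(t)\le -c.
\]

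The engine for both implications is the distinguished vertex of Remark~\ref{R2.1}: by \eqref{P_inf} there is $j_0\le N_0$ with $\rho_{j_0}>\tfrac12$, so $m_{j_0}\ge M_{N_0}$ (see \eqref{M_N0}) and, by $\sum_i m_i\rho_i=1$, also $\rho_{j_0}\le 1/m_{j_0}\le 1/M_{N_0}$, whence $\rho_{j_0}\le C_2/2$ using the entry $2/M_{N_0}$ of \eqref{C-1-15}. For the lower estimate, fix $k$ with $\rho_k$ close to $C_1$. Since all densities are $\ge C_1$ while $\rho_k$ is essentially the minimum, $\log\rho_j-\log\rho_k$ is (up to an $O(\eta)$ error) nonnegative, so the sum over $\Phi_j>\Phi_k$ in \eqref{FP} is $\ge -2\|\Phi\|_{\infty}\sum_{\Phi_j>\Phi_k}m_j\rho_j\ge -2\|\Phi\|_{\infty}$; the sums over $\Phi_j<\Phi_k$ and $\Phi_j=\Phi_k$ carry the small factor $\rho_k\le C_1+\eta$ and contribute $O\big((\|\Phi\|_{\infty}+\beta+\beta|\log C_1|+\beta\log C_2)C_1\big)$, which the last entry of \eqref{C-1-14} makes negligible; and the decisive favourably–signed term is the one involving $j_0$. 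I would case–split on the position of $\Phi_{j_0}$ relative to $\Phi_k$ (and, within that, on the relative sizes of the densities): when $\Phi_{j_0}\ge\Phi_k$ the $j_0$–term is $\ge\tfrac12 m_{j_0}\big(\beta\log\rho_{j_0}-\beta\log\rho_k-2\|\Phi\|_{\infty}\big)$, strictly positive because $-\log C_1\ge\log 4+\tfrac{4\|\Phi\|_{\infty}}{M_{N_0}\beta}$; when $\Phi_{j_0}<\Phi_k$ one exploits instead the largeness of $-\beta\log\rho_k$ together with the bound $\rho_{j_0}\le 1/M_{N_0}$ and the same exponential lower bound on $C_1$ to control the sign, together with the observation that the negative terms in the $\Phi_j<\Phi_k$–sum have bracket of modulus $\le 2\|\Phi\|_{\infty}$ (the log part having the good sign). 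The upper estimate is symmetric: when $\rho_k\ge C_2-\eta$ one has $\rho_j\le\rho_k$ for all $j$, so the $\Phi_j\le\Phi_k$ sums are $\le 0$ and the $\Phi_j>\Phi_k$ sum is $\le 2\|\Phi\|_{\infty}\sum_{\Phi_j>\Phi_k}m_j\rho_j\le 2\|\Phi\|_{\infty}$, while the $j_0$–term (using $\rho_{j_0}\le C_2/2$, hence $\log(\rho_{j_0}/\rho_k)\le-\log 2$) is a negative contribution of size $\gtrsim M_{N_0}\beta(\log 2)\,C_2$ that beats $2\|\Phi\|_{\infty}$ once $C_2\ge\tfrac{2\|\Phi\|_{\infty}+2\beta}{\beta M_{N_0}}$; the remaining subcase, in which $\Phi_{j_0}$ lies on the ``wrong'' side of $\Phi_k$ so the $\Phi$–part of the $j_0$–term has no helpful sign, is precisely where the exponential lower bound $C_2\ge\tfrac{e^{2\|\Phi\|_{\infty}/(\beta C_1 M_{N_0})}}{2M_{N_0}}$ is consumed. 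This establishes the two implications with a uniform $c>0$ (and any $\eta$ small enough).

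Granted these pointwise bounds, the conclusion is short. Suppose $\tau<\infty$ and, say, $m(\tau)=C_1$; since $m$ is continuous with $m(0)\ge 2C_1>C_1+\eta$, there is a last time $t_0<\tau$ with $m(t_0)=C_1+\eta$ and $m(t)<C_1+\eta$ on $(t_0,\tau]$. On that interval $m$ is absolutely continuous and, by Lemma~\ref{L2.1} combined with the component estimate (for fixed $t$, $\min_{i\le n}\rho_i(t)\to m(t)<C_1+\eta$, so for large $n$ the attaining component satisfies the hypothesis of the first implication), $m'(t)\ge c>0$ for a.e.\ $t\in(t_0,\tau)$; hence $m(\tau)>m(t_0)=C_1+\eta>C_1$, a contradiction. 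The identical argument rules out $M(\tau)=C_2$. Therefore $\tau=\infty$ and $\{\rho(t):t\in[0,\infty)\}\subset\mathcal C$. The main obstacle is the third step: because $\inf_i\rho_i(t)$ and $\sup_i\rho_i(t)$ need not be attained at any single vertex (unlike the finite–graph setting of \cite{C-H-L-T,C-H-L-Z}, and unlike the one–sided situation there, here both bounds must be propagated), one cannot simply differentiate a distinguished component; Lemma~\ref{L2.1} must be coupled with the delicate sign analysis of $\di\rho_k/\di t$ at the boundary, whose unfavourable subcases are exactly what dictate the exponential form of $C_1$ and $C_2$ in \eqref{C-1-14}--\eqref{C-1-15}.
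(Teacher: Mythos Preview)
Your approach is essentially the paper's: the same exit-time argument, the same reduction via Lemma~\ref{L2.1} to the derivative of a near-extremal component $\rho_k$, and the same three-way case split on the potential of the distinguished vertex relative to $\Phi_k$, each subcase consuming one of the entries of \eqref{C-1-14}--\eqref{C-1-15}. The only cosmetic difference is that for the upper barrier the paper anchors at vertex $1$ (using $C_1\le\rho_1\le 1/M_{N_0}$) rather than at $j_0$, which is why $C_1$ appears in the exponent of the second entry of \eqref{C-1-15}.
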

\begin{proof}

Define \[\tau=\sup\left\{t~\Bigg|~\forall~ s\in[0,t),~ 0<C_1\le\inf\limits_{i\in\bbn} \rho_i(s)\le \sup\limits_{i\in\bbn} \rho_i(s)\le C_2<\infty \right\}.\]
Obviously we have $\tau>0$ by the definition of $C_1$ and $C_2$ in \eqref{C-1-14}-\eqref{C-1-15}. We want to prove that $\tau=\infty$. Suppose that the contrary holds, we have $\tau<\infty$. Then, we obtain \begin{align}\label{Contradiction}
C_1=\inf\limits_{i\in\bbn} \rho_i(\tau)\quad \text{or}\quad  \sup\limits_{i\in\bbn} \rho_i(\tau)=C_2.
\end{align}
Note that the derivative of $\rho_i(t)$ is uniformly bounded when we have $C_1\le\inf\limits_{i\in\bbn} \rho_i(t) \le \sup\limits_{i\in\bbn} \rho_i(t)\le C_2$. More precisely, we use \eqref{FP} to see
\begin{align*}
	\begin{aligned}
		\frac{\di\rho_i}{\di t}&  = 
	\sum_{ \Phi_j > \Phi_i} m_j\left[ (\Phi_j + \beta \log \rho_j) -(\Phi_i + \beta \log \rho_i) \right] \rho_j \\
	&	\quad+  \sum_{ \Phi_j < \Phi_i} m_j\left[ (\Phi_j + \beta \log \rho_j) - (\Phi_i + \beta \log \rho_i) \right] \rho_i + \sum_{\Phi_j = \Phi_i}m_j \beta(\rho_j - \rho_i)\\
	&\le \Bigg(2\|\Phi\|_{l^\infty}+\log\left(\sup\limits_{i\in\bbn} \rho_i(t)\right)-\log\left(\inf\limits_{i\in\bbn} \rho_i(t)\right)+\beta\Bigg)\left(\sup\limits_{i\in\bbn} \rho_i(t)\right)
		\end{aligned}
\end{align*}
and 
\begin{align*}
	\begin{aligned}
		\frac{\di\rho_i}{\di t}&  = 
		\sum_{ \Phi_j > \Phi_i} m_j\left[ (\Phi_j + \beta \log \rho_j) -(\Phi_i + \beta \log \rho_i) \right] \rho_j \\
		&	\quad+  \sum_{ \Phi_j < \Phi_i} m_j\left[ (\Phi_j + \beta \log \rho_j) - (\Phi_i + \beta \log \rho_i) \right] \rho_i + \sum_{\Phi_j = \Phi_i}m_j \beta(\rho_j - \rho_i)\\
		&\ge- \Bigg(2\|\Phi\|_{l^\infty}+\log\left(\sup\limits_{i\in\bbn} \rho_i(t)\right)-\log\left(\inf\limits_{i\in\bbn} \rho_i(t)\right)+\beta\Bigg)\left(\sup\limits_{i\in\bbn} \rho_i(t)\right).
	\end{aligned}
\end{align*}
This implies that \[\sup\limits_{i\in[N]} \rho_i(t),\quad \inf\limits_{i\in[N]} \rho_i(t), \quad \sup\limits_{i\in\bbn} \rho_i(t), \quad \text{and} \quad\inf\limits_{i\in\bbn} \rho_i(t)\] are Lipschitz continuous, therefore we can apply Lemma \ref{L2.1} to see 
\begin{align}\label{C-1-13}
		\liminf_{N\rightarrow \infty}\frac{\di}{\di t}\min\limits_{i\in[N]} \rho_i(t)\le \frac{\di}{\di t}\inf\limits_{i\in\bbn} \rho_i(t)\ \ \ \text{and}\ \ \ \frac{\di}{\di t}\sup\limits_{i\in\bbn} \rho_i(t)\le \limsup_{N\rightarrow \infty}\frac{\di}{\di t}\max\limits_{i\in[N]} \rho_i(t)\quad a.e.\ 0\le t\le \tau.
\end{align}
To complete the proof, we only need to show that \[ \frac{\di}{\di t}\inf\limits_{i\in\bbn} \rho_i(t)>0\quad \text{when} \quad\rho_{\inf}(t):=\inf\limits_{i\in\bbn} \rho_i(t)=C_1,\]\[~ \frac{\di}{\di t}\sup\limits_{i\in\bbn} \rho_i(t)<0\quad \text{when}\quad\rho_{\sup}(t):=\sup\limits_{i\in\bbn} \rho_i(t)=C_2.\]
Now, we prove the above inequalities one by one.
\vspace{0.2cm}

\noindent$\bullet$ Step A (The derivative of infimum):
Note that
\[\lim\limits_{N\to\infty}\min\limits_{i\in[N]} \rho_i(t)=\inf\limits_{i\in\bbn} \rho_i(t),\quad\lim\limits_{N\to\infty}\log\left(\min\limits_{i\in[N]} \rho_i(t)\right)=\log\left(\inf\limits_{i\in\bbn} \rho_i(t)\right). \]
Then, for any $\varepsilon$, there exists some $N\ge N_0$ such that 
\[\min\limits_{i\in[N]} \rho_i(t)<\inf\limits_{i\in\bbn} \rho_i(t)+\varepsilon,\quad\log\left(\min\limits_{i\in[N]} \rho_i(t)\right)<\log\left(\inf\limits_{i\in\bbn} \rho_i(t)\right)+\varepsilon. \]
 For simplicity of notation, we drop time $t$ and choose a time-varying index $i_N$ such that 
\begin{align*}
	\begin{aligned}
	\frac{\di  \rho_{i_N}}{\di t}&=	\frac{\di \min\limits_{i\in[N]} \rho_i}{\di t}\\
		&  = 
		\sum_{ \Phi_j > \Phi_{i_N}} m_j\left[ (\Phi_j + \beta \log \rho_j) -(\Phi_{i_N} + \beta \log \rho_{i_N}) \right] \rho_j \\
		&\quad+  \sum_{ \Phi_j < \Phi_{i_N}} m_j\left[ (\Phi_j + \beta \log \rho_j) - (\Phi_{i_N} + \beta \log \rho_{i_N}) \right] \rho_{i_N} + \sum_{\Phi_j = \Phi_i}m_j \beta(\rho_j - \rho_{i_N})\\
		&\ge \sum_{ \Phi_j > \Phi_{i_N}} m_j\left[ (\Phi_j + \beta \log \rho_j) -(\Phi_{i_N} + \beta \log \rho_{\inf}) \right] \rho_j \\
		&\quad+  \sum_{ \Phi_j < \Phi_{i_N}} m_j\left[ (\Phi_j + \beta \log \rho_j) - (\Phi_{i_N} + \beta \log \rho_{\inf}) \right] \rho_{\inf} + \sum_{\Phi_j = \Phi_i}m_j \beta(\rho_j - \rho_{\inf})-\mathcal{O}(\varepsilon)\\
		&=:\mathcal{I}_1+\mathcal{I}_2+\mathcal{I}_3-\mathcal{O}(\varepsilon).
	\end{aligned}
\end{align*}
We emphasize that $C_1<\frac{1}{4}$, therefore $j_0\neq i_N$ for sufficiently large $N$. Next, we further divide into three cases to analyze the derivative of $\inf\limits_{i\in\bbn} \rho_i(t).$

\noindent$\bullet$ Case A.1 ($\Phi_{j_0} > \Phi_{i_N}$): We use 
\[ \log \rho_j\ge\log \rho_{\inf} \] and the uniform control of the density from below at a given vertex in the graph ensured by \eqref{P_inf} to deduce
\begin{align}\label{C-1-16}
	\begin{aligned}
	\mathcal{I}_1&=\sum_{ \Phi_j > \Phi_{i_N}} m_j\left[ (\Phi_j + \beta \log \rho_j) -(\Phi_{i_N} + \beta \log \rho_{\inf}) \right] \rho_j\\
&	\ge m_{j_0}\left[  \beta \log \rho_{j_0}-  \beta \log \rho_{\inf} \right] \rho_{j_0}\\
&\ge M_{N_0}\left[  \beta \log \frac{1}{2}-  \beta \log \rho_{\inf} \right] \frac{1}{2}=\log\left(\left(2\rho_{\inf}\right)^{-\frac{\beta M_{N_0}}{2}}\right).
\end{aligned}
\end{align}
Next, we use $\rho_{\inf}=C_1<1$, $\log \rho_j\ge\log \rho_{\inf}$, and $\Phi_j-\Phi_i\ge -2\|\Phi\|_{l^\infty}$ due to our assumptions to find 
\begin{align}\label{C-1-17}
	\begin{aligned}
		\mathcal{I}_2&= \sum_{ \Phi_j < \Phi_{i_N}} m_j\left[ (\Phi_j + \beta \log \rho_j) - (\Phi_{i_N} + \beta \log \rho_{\inf}) \right] \rho_{\inf} \ge -2\rho_{\inf}\|\Phi\|_{l^\infty}\ge -2\|\Phi\|_{l^\infty}.
	\end{aligned}
\end{align}
Moreover, we use $\rho_j\ge\rho_{\inf}$ to get $\mathcal{I}_3\ge0$. Furthermore, we combine \eqref{C-1-14}, \eqref{C-1-16} and \eqref{C-1-17} to derive 
\begin{align*}
	\begin{aligned}
		\frac{\di  \rho_{i_N}}{\di t}\ge\log\left(\left(2\rho_{\inf}\right)^{-\frac{\beta M_{N_0}}{2}}\right)-2\|\Phi\|_{l^\infty}-\mathcal{O}(\varepsilon)\ge \frac{\beta M_{N_0}}{2}\log2- \mathcal{O}(\varepsilon).
	\end{aligned}
\end{align*}	
Finally, we combine the above inequality and \eqref{C-1-13} to see
\[\frac{\di}{\di t}\inf\limits_{i\in\bbn} \rho_i(t)\ge	\liminf_{N\rightarrow \infty}\frac{\di}{\di t}\min\limits_{i\in[N]} \rho_i(t)\ge\frac{\beta M_{N_0}}{2}\log 2.\]
\vspace{0.2cm}

\noindent$\bullet$ Case A.2 ($\Phi_{j_0} <\Phi_{i_N}$): We use a similar argument in Case A.1 to see $\mathcal{I}_1
\ge0$ and $\mathcal{I}_3\ge0$. Now, we again use the uniform control of the density from below at a given node \eqref{P_inf} to estimate the term $\mathcal{I}_2$ as below.
\begin{align}\label{C-1-18}
	\begin{aligned}
		\mathcal{I}_2&= \sum_{ \Phi_j <\Phi_{i_N}} m_j\left[ (\Phi_j + \beta \log \rho_j) - (\Phi_{i_N} + \beta \log \rho_{\inf}) \right] \rho_{\inf} \\
		&	\ge \big(-2\|\Phi\|_{l^\infty} +m_{j_0}\left[  \beta \log \rho_{j_0}-  \beta \log \rho_{\inf} \right] \big)\rho_{\inf}\\
		&\ge \Bigg(-2\|\Phi\|_{l^\infty} +M_{N_0}\left[  \beta \log \frac{1}{2}-  \beta \log \rho_{\inf} \right] \Bigg)\rho_{\inf}\\
		&=\Big(-2\|\Phi\|_{l^\infty} +\log\left(\left(2\rho_{\inf}\right)^{-\beta M_{N_0}}\right) \Big)\rho_{\inf}\\
		&\ge\Big(2\|\Phi\|_{l^\infty}+\frac{\beta M_{N_0}}{2}\log 2  \Big)C_1.
	\end{aligned}
\end{align}
Finally, we combine the above fact and \eqref{C-1-13} to see 
\[\frac{\di}{\di t}\inf\limits_{i\in\bbn} \rho_i(t)\ge	\liminf_{N\rightarrow \infty}\frac{\di}{\di t}\min\limits_{i\in[N]} \rho_i(t)\ge\Big(2\|\Phi\|_{l^\infty}+\frac{\beta M_{N_0}}{2}\log 2  \Big)C_1.\]
\vspace{0.2cm}

\noindent$\bullet$ Case A.3 ($\Phi_{j_0} = \Phi_{i_N}$): First, we use the similar argument in \eqref{C-1-16} and \eqref{C-1-17} to see $\mathcal{I}_1\ge0$ and 
\begin{align*}
	\mathcal{I}_2\ge-2\|\Phi\|_{l^\infty}\rho_{\inf}.
\end{align*} 
Next, we make use again of the uniform control of the density from below at a given vertex \eqref{P_inf} to obtain
\begin{align*}
	\begin{aligned}
		\mathcal{I}_3&=\sum_{\Phi_j = \Phi_i}m_j \beta(\rho_j - \rho_{\inf})\ge m_{j_0} \beta\rho_{j_0} - \beta\left(\sum_{\Phi_j = \Phi_i}m_j \right) \rho_{\inf}\ge \frac{\beta M_{N_0}}{2}-\beta \rho_{\inf}.
			\end{aligned}
	\end{align*}
Then, we combine the above inequalities and \eqref{C-1-13} to see 
\[\frac{\di}{\di t}\inf\limits_{i\in\bbn} \rho_i(t)\ge	\liminf_{N\rightarrow \infty}\frac{\di}{\di t}\min\limits_{i\in[N]} \rho_i(t)\ge-2\|\Phi\|_{l^\infty}\rho_{\inf}+\frac{\beta M_{N_0}}{2}-\beta \rho_{\inf}\ge\frac{\beta M_{N_0}}{4}.\]
We combine Cases A.1, A.2, and A.3 to deduce that there exists a interval $[\tau^{-},\tau]$ $(\tau^{-}<\tau)$ such that for any differentiable point $s$ of $\inf\limits_{i\in\bbn} \rho_i(s)$ in this interval, 
\begin{align}\label{lower}
  \frac{\di}{\di s}  \inf\limits_{i\in\bbn} \rho_i(s)>0.
\end{align}
Finally, we have 
\[\inf\limits_{0\le t\le\tau}\inf\limits_{i\in\bbn} \rho_i(t)>C_1.\]
 
\vspace{0.2cm}
\noindent$\bullet$ Step B (The derivative of supremum) : We use $\sum_{i=1}^{\infty} m_i\rho_i=1$ to obtain
\begin{align}\label{P_sup}\rho_i\le \frac{1}{M_{N_0}}, \quad \forall ~ i\le N_0, \end{align}
by definition of $M_{N_0}$ in \eqref{M_N0}.
 For any $\varepsilon$, there exists some $N\ge N_0$ such that 
\[\max\limits_{i\in[N]} \rho_i(t)>\sup\limits_{i\in\bbn} \rho_i(t)-\varepsilon,\quad\log\left(\max\limits_{i\in[N]} \rho_i(t)\right)>\log\left(\sup\limits_{i\in\bbn} \rho_i(t)\right)-\varepsilon. \]
For simplicity of notation, we drop time $t$ and choose a time-varying index $i_N$ such that 
\begin{align*}
	\begin{aligned}
		\frac{\di  \rho_{i_N}}{\di t}&=	\frac{\di \max\limits_{i\in[N]} \rho_i}{\di t}\\
		&  = 
		\sum_{ \Phi_j > \Phi_{i_N}} m_j\left[ (\Phi_j + \beta \log \rho_j) -(\Phi_{i_N} + \beta \log \rho_{i_N}) \right] \rho_j \\
		&\quad+  \sum_{ \Phi_j < \Phi_{i_N}} m_j\left[ (\Phi_j + \beta \log \rho_j) - (\Phi_{i_N} + \beta \log \rho_{i_N}) \right] \rho_{i_N} + \sum_{\Phi_j = \Phi_i}m_j \beta(\rho_j - \rho_{i_N})\\
		&\le \sum_{ \Phi_j > \Phi_{i_N}} m_j\left[ (\Phi_j + \beta \log \rho_j) -(\Phi_{i_N} + \beta \log \rho_{\sup}) \right] \rho_j \\
		&\quad+  \sum_{ \Phi_j < \Phi_{i_N}} m_j\left[ (\Phi_j + \beta \log \rho_j) - (\Phi_{i_N} + \beta \log \rho_{\sup}) \right] \rho_{\sup} + \sum_{\Phi_j = \Phi_i}m_j \beta(\rho_j - \rho_{\sup})+\mathcal{O}(\varepsilon)\\
		&=:\mathcal{I}_4+\mathcal{I}_5+\mathcal{I}_6+\mathcal{O}(\varepsilon).
	\end{aligned}
\end{align*}
We emphasize that $C_2\ge \frac{2}{M_{N_0}}$, therefore index $i_N > N_0$ for sufficiently large $N$ by \eqref{P_sup}. Next, we similarly further divide into three cases to analyze the derivative of $\sup\limits_{i\in\bbn} \rho_i(t).$

\noindent$\bullet$ Case B.1 ($\Phi_{1} > \Phi_{i_N}$): We use 
\[ \log \rho_j\le\log \rho_{\sup} \quad \text{and}\quad \sum\limits_{i=1}^{\infty} m_i\rho_i=1\]
to see
\begin{align}\label{C-1-19}
		\mathcal{I}_4&=\sum_{ \Phi_j > \Phi_{i_N}} m_j\left[ (\Phi_j + \beta \log \rho_j) -(\Phi_{i_N} + \beta \log \rho_{\sup}) \right] \rho_j\nonumber\\
		&\le m_{1}\left[  \beta \log \rho_{1}-  \beta \log \rho_{\sup} \right]\rho_{1}+2\|\Phi\|_{l^\infty}\sum_{ \Phi_j > \Phi_{i_N}} m_j\rho_j\nonumber\\
		&	\le m_{1}\left[  \beta \log \rho_{1}-  \beta \log \rho_{\sup} \right] \rho_{1}+2\|\Phi\|_{l^\infty}\nonumber\\
		&\le M_{N_0}\left[  -\beta \log M_{N_0}-  \beta \log \rho_{\sup} \right] C_1+2\|\Phi\|_{l^\infty}\nonumber\\
        &=\log\left(\left(M_{N_0}\rho_{\sup}\right)^{-\beta M_{N_0}C_1}\right)+2\|\Phi\|_{l^\infty}.
\end{align}
Next, we use $\log \rho_j\le\log \rho_{\sup}$ to find 
\begin{align}\label{C-1-20}
	\begin{aligned}
		\mathcal{I}_5 =\sum_{ \Phi_j < \Phi_{i_N}} m_j\left[ (\Phi_j + \beta \log \rho_j) - (\Phi_{i_N} + \beta \log \rho_{\sup}) \right] \rho_{\sup}\le0.\\
	\end{aligned}
\end{align}
Moreover, we use $\rho_j\le\rho_{\sup}$ to get $\mathcal{I}_6\le0$. Furthermore, we combine \eqref{C-1-15}, \eqref{C-1-19} and \eqref{C-1-20} to derive 
\begin{align*}
	\begin{aligned}
		\frac{\di  \rho_{i_N}}{\di t}\le\log\left(\left(M_{N_0}\rho_{\sup}\right)^{-\beta M_{N_0}C_1}\right)+2\|\Phi\|_{l^\infty}+\mathcal{O}(\varepsilon)\le -C_1\beta M_{N_0}\log 2+ \mathcal{O}(\varepsilon).
	\end{aligned}
\end{align*}	
Finally, we combine the above inequality and \eqref{C-1-13} to see 
\[\frac{\di}{\di t}\sup\limits_{i\in\bbn} \rho_i(t)\le	\limsup_{N\rightarrow \infty}\frac{\di}{\di t}\max\limits_{i\in[N]} \rho_i(t)\le -C_1\beta M_{N_0}\log 2.\]
\vspace{0.2cm}

\noindent$\bullet$ Case B.2 ($\Phi_{1} <\Phi_{i_N}$): We use a similar argument in Case B.1 to see $\mathcal{I}_6\le0$. Now, we use  $\log \rho_j\le\log \rho_{\sup}$, $ \rho_{\sup}\ge C_1$, $\sum_{i=1}^{\infty} m_i\rho_i=1$, and $\rho_1\le\frac{1}{M_{N_0}}$ to estimate the terms $\mathcal{I}_4$ and  $\mathcal{I}_5$. That is, we have
\begin{align}\label{C-1-21}
	\begin{aligned}
		\mathcal{I}_4=\sum_{ \Phi_j > \Phi_{i_N}} m_j\left[ (\Phi_j + \beta \log \rho_j) -(\Phi_{i_N} + \beta \log \rho_{\sup}) \right] \rho_j\le2\|\Phi\|_{l^\infty}\sum_{ \Phi_j > \Phi_{i_N}} m_j\rho_j\le2\|\Phi\|_{l^\infty},
	\end{aligned}
\end{align}
and
\begin{align}\label{C-1-22}
	\begin{aligned}
		\mathcal{I}_5&= \sum_{ \Phi_j <\Phi_{i_N}} m_j\left[ (\Phi_j + \beta \log \rho_j) - (\Phi_{i_N} + \beta \log \rho_{\sup}) \right] \rho_{\sup} \\
		&	\le m_{1}\left[  \beta \log \rho_{1}-  \beta \log \rho_{\sup} \right] \rho_{\sup}\\
        	&\le M_{N_0}\left[  -\beta \log M_{N_0}-  \beta \log \rho_{\sup} \right] \rho_{\sup}\\
		&\le \log\left(\left(M_{N_0}\rho_{\sup}\right)^{-\beta M_{N_0}C_1}\right).
	\end{aligned}
\end{align}
Finally, we combine above inequalities, \eqref{C-1-15}, and \eqref{C-1-13} to find
\begin{align*}
	\begin{aligned}
		\frac{\di}{\di t}\sup\limits_{i\in\bbn} \rho_i(t)&\le	\limsup_{N\rightarrow \infty}\frac{\di}{\di t}\max\limits_{i\in[N]} \rho_i(t)\\
		&\le \log\left(\left(M_{N_0}\rho_{\sup}\right)^{-\beta M_{N_0}C_1}\right)+2\|\Phi\|_{l^\infty}\le-C_1\beta M_{N_0}\log 2.
	\end{aligned}
\end{align*}

\vspace{0.2cm}

\noindent$\bullet$ Case B.3 ($\Phi_{1} = \Phi_{i_N}$): First, we use the similar argument in \eqref{C-1-19} and \eqref{C-1-20} and $\sum_{i=1}^{\infty} m_i\rho_i=1$ to see 
\begin{align*}
	\mathcal{I}_4+\mathcal{I}_5\le2\|\Phi\|_{l^\infty}.
\end{align*} 
Next, we have
\begin{align*}
	\begin{aligned}
		\mathcal{I}_6&=\sum_{\Phi_j = \Phi_i}m_j \beta(\rho_j - \rho_{\sup})\le  \beta- \beta m_1  \rho_{\sup}\le \beta- \beta M_{N_0}  \rho_{\sup}.
	\end{aligned}
\end{align*}
Then, we combine the above inequalities, \eqref{C-1-15} and \eqref{C-1-13} to find 
\[\frac{\di}{\di t}\sup\limits_{i\in\bbn} \rho_i(t)\le	\limsup_{N\rightarrow \infty}\frac{\di}{\di t}\max\limits_{i\in[N]} \rho_i(t)\le 2\|\Phi\|_{l^\infty}+ \beta- \beta M_{N_0}  \rho_{\sup}\le-\beta.\]
Combining Cases B.1, B.2, and B.3, there exists a interval $[\tau^{-},\tau]$ $(\tau^{-}<\tau)$ such that for any differentiable point $s$ of $\sup\limits_{i\in\bbn} \rho_i(s)$ in this interval, 
\begin{align}\label{upper}
  \frac{\di}{\di s}  \sup\limits_{i\in\bbn} \rho_i(s)<0.
\end{align}
Then, we have
\[\sup\limits_{0\le t\le\tau}\sup\limits_{i\in\bbn} \rho_i(t)< C_2.\]
Finally, we combine Steps A and B to derive $\tau=\infty$. The proof is completed.
\end{proof}
Now, we are ready to provide the long-time behavior of FPE \eqref{FP} as follows.
\begin{proposition}\label{P2.4}
		Let \( \mathcal{G} = (\mathcal{V}, \mathcal{E},\bbm) \) be an infinite graph with vertex set \( \mathcal{V} = \{a_1, \dots, a_N, \dots\} \), edge set \( \mathcal{E} \), weight function $\bbm$, a potential \( \Phi = (\Phi_i)_{i=1}^{\infty}\in l^{\infty}(\bbr) \) defined on \( \mathcal{V} \), and constant \( \beta> 0 \). Suppose that initial value \( \rho(0) = \rho_0 \in {\rm Int} ( \mathcal{M} ) \cap l^{\infty}(\bbr)\), then there exists some constants $C$ depend only on $\beta$, $\rho^{*}$, and $\rho_0$ such that
			\begin{align}\label{C-1-23}
				L(t)\le L(0)e^{-Ct}.
				\end{align}
\end{proposition}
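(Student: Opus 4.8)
The plan is to derive the exponential decay directly by feeding Proposition~\ref{P2.3} into Proposition~\ref{P2.1} and then invoking Gr\"onwall's inequality.

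First I would apply Proposition~\ref{P2.3}: since $\rho_0\in{\rm Int}(\mathcal{M})\cap l^{\infty}(\bbr)$, the solution $\rho(t)$ stays in the invariant set $\mathcal{C}$ for all $t\ge0$, so that
\[
0<C_1\le\inf_{i\in\bbn}\rho_i(t)\le\sup_{i\in\bbn}\rho_i(t)\le C_2<\infty,\qquad \forall\, t\ge0,
\]
with $C_1,C_2$ the explicit constants defined in \eqref{C-1-14}--\eqref{C-1-15}. I would also record that the Gibbs density satisfies the two-sided bounds $0<\inf_i\rho_i^*\le\sup_i\rho_i^*<\infty$, which is immediate from $\rho_i^*=e^{-\Phi_i/\beta}/\sum_k m_k e^{-\Phi_k/\beta}$ together with $\|\Phi\|_{\infty}<\infty$ and $\sum_k m_k=1$.

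Next, because the hypothesis of Proposition~\ref{P2.1} now holds for every $t\ge0$, that proposition gives
\[
\frac{\di L(t)}{\di t}\le-\beta\,\frac{\inf_i\rho_i(t)}{\sup_i\rho_i(t)}\,\frac{\inf_i\rho_i^*}{\sup_i\rho_i^*}\,L(t)\le-C\,L(t),\qquad C:=\beta\,\frac{C_1}{C_2}\,\frac{\inf_i\rho_i^*}{\sup_i\rho_i^*}>0,
\]
where in the last inequality I used $L(t)\ge0$ and $\inf_i\rho_i(t)/\sup_i\rho_i(t)\ge C_1/C_2$. The constant $C$ depends only on $\beta$, $\rho^*$, and $\rho_0$ (the latter entering through $C_1$ and $C_2$), as required.

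Finally I would close the argument with Gr\"onwall's inequality. The only points needing a line of care are that $L(0)$ is finite, since
\[
L(0)\le\frac{1}{\inf_i\rho_i^*}\sum_{i=1}^{\infty} m_i(\rho_i(0)-\rho_i^*)^2\le \frac{2}{\inf_i\rho_i^*}\Big((\sup_i\rho_i(0))^2+(\sup_i\rho_i^*)^2\Big)<\infty
\]
using $\sum_i m_i=1$; and that $L$ is absolutely continuous on $[0,\infty)$, which follows --- exactly as in the termwise differentiation already performed in the proof of Proposition~\ref{P2.1} --- from the uniform-in-time bounds on $\rho(t)$ and on $\di\rho_i/\di t$ together with $\sum_i m_i=1$, ensuring uniform convergence of the defining series and of its termwise derivative on compact time intervals. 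Integrating the differential inequality then yields $L(t)\le L(0)e^{-Ct}$ for all $t\ge0$. There is essentially no genuine obstacle here: the hard work --- the a priori two-sided bounds and the entropy-dissipation estimate --- is already contained in Propositions~\ref{P2.1} and~\ref{P2.3}, and what remains is routine verification of finiteness and regularity of $L$ plus bookkeeping of the decay rate.
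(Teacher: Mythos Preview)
Your proposal is correct and follows exactly the paper's own argument: combine Proposition~\ref{P2.3} with Proposition~\ref{P2.1} to obtain $\frac{\di L}{\di t}\le -\beta\frac{C_1}{C_2}\frac{\inf_i\rho_i^{*}}{\sup_i\rho_i^{*}}L=:-CL$, and then apply Gr\"onwall. The additional remarks you make about finiteness of $L(0)$ and absolute continuity of $L$ are fine but are not spelled out in the paper's (two-line) proof.
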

\begin{proof}
	We combine Propositions \ref{P2.1} and \ref{P2.3} to see 
\begin{align}\label{NewC_3}
\frac{\di L(t)}{\di t}\le-\frac{\beta}{2}\frac{\left(\inf_i\rho_i(t)\right)}{\left(\sup_i \rho_i(t)\right)}\frac{\left(\inf_i \rho_i^{*}\right)}{\left(\sup_i \rho_i^{*}\right)}L(t)\le -\frac{\beta}{2}\frac{C_1}{C_2}\frac{\left(\inf_i \rho_i^{*}\right)}{\left(\sup_i \rho_i^{*}\right)}L(t)=:-CL(t).
\end{align}	
Then, by the Gr\"onwall inequality, we finished the proof.
\end{proof}
\begin{remark}\label{R2.2}
We can use \eqref{C-1-8} to see
\[\|\rho(t)-\rho^{*}\|_{l_{\rm \bbm}^2}^2\le\left( \sup_i \rho_i^{*}\right)L(t)\le\left(\sup_i \rho_i^{*}\right)L(0)e^{-Ct}\le\frac{\sup_i \rho_i^{*}}{\inf_i \rho_i^{*}}\|\rho_0-\rho^{*}\|_{l_{\rm \bbm}^2}^2e^{-Ct} . \]
The proof of (3).(c) in Theorem \ref{T2.1} is completed. Moreover, we combine $m_i>0$ to see
\[\lim\limits_{t\to\infty}\rho_i(t)=\rho_i^{*},\quad \forall~ i\in\bbn.\]
\end{remark}
\begin{corollary}
	Let \( \mathcal{G}=(\mathcal{V},\mathcal{E},\bbm) \) be an infinite graph with vertex set
	\( \mathcal{V}=\{a_1,a_2,\dots\} \), edge set \( \mathcal{E} \), weight function \(\bbm\),
	a potential \( \Phi=(\Phi_i)_{i=1}^{\infty} \) defined on \( \mathcal{V} \), and constant
	\( \beta\ge0 \).
	\begin{enumerate}
		\item If \( \beta=0 \), then the FPE reduces to
		\[
		\frac{\di \rho_i}{\di t}
		=
		\sum_{\Phi_j>\Phi_i}m_j(\Phi_j-\Phi_i)\rho_j
		+
		\sum_{\Phi_j<\Phi_i}m_j(\Phi_j-\Phi_i)\rho_i,
		\qquad i\in\bbn.
		\]
		
		\item If the potential \(\Phi_i\) is constant for all \(i\in\bbn\) and \(\beta>0\), then the FPE becomes the linear discrete master equation
		\[
		\frac{\di\rho_i}{\di t}
		=
		\beta\sum_{j\in\bbn}m_j(\rho_j-\rho_i),
		\qquad i\in\bbn.
		\]
		In this case, for every initial value
		\[
		\rho_0\in \mathcal M\cap l_{\bbm}^2(\bbr),
		\]
		there exists a unique global solution
		\[
		\rho\in C([0,\infty);l_{\bbm}^2(\bbr))\cap C^1((0,\infty);l_{\bbm}^2(\bbr)).
		\]
		Moreover, the solution converges exponentially to the equilibrium
		\[
		\rho^c=(1,1,\dots)
		\]
		in the \(l_{\bbm}^2\)-metric. More precisely,
		\[
		\|\rho(t)-\rho^c\|_{l_{\bbm}^2}
		=
		e^{-\beta t}\|\rho_0-\rho^c\|_{l_{\bbm}^2},
		\qquad t\ge0.
		\]
		If, in addition, \(\rho_0\in l^\infty(\bbr)\), then the same convergence also holds in the \(l^\infty\)-metric:
		\[
		\|\rho(t)-\rho^c\|_{l^\infty}
		=
		e^{-\beta t}\|\rho_0-\rho^c\|_{l^\infty}.
		\]
	\end{enumerate}
\end{corollary}
\begin{proof}
	We only prove the second assertion. Since \(\Phi_i\) is constant, the FPE reduces to
	\[
	\frac{\di\rho_i}{\di t}
	=
	\beta\sum_{j\in\bbn}m_j(\rho_j-\rho_i).
	\]
	Using the mass constraint
	\[
	\sum_{j=1}^{\infty}m_j\rho_j(t)=1,
	\]
	we obtain
	\[
	\frac{\di\rho_i}{\di t}
	=
	\beta(1-\rho_i),
	\qquad i\in\bbn.
	\]
	Therefore,
	\[
	\rho_i(t)=1+e^{-\beta t}(\rho_i(0)-1),
	\qquad i\in\bbn.
	\]
	This formula immediately gives existence and uniqueness for every
	\(\rho_0\in\mathcal M\cap l_{\bbm}^2(\bbr)\). Moreover,
	\[
	\rho_i(t)-1=e^{-\beta t}(\rho_i(0)-1),
	\]
	and hence
	\[
	\|\rho(t)-\rho^c\|_{l_{\bbm}^2}
	=
	e^{-\beta t}\|\rho_0-\rho^c\|_{l_{\bbm}^2}.
	\]
	If \(\rho_0\in l^\infty(\bbr)\), the same explicit formula yields
	\[
	\|\rho(t)-\rho^c\|_{l^\infty}
	=
	e^{-\beta t}\|\rho_0-\rho^c\|_{l^\infty}.
	\]
	The proof is completed.
\end{proof}

\begin{remark}
The previous corollary shows that, in the linear constant-potential case, the assumption
\(\rho_0\in l^\infty(\bbr)\) can be removed. In this case, the equation reduces to a linear
master equation, and existence, uniqueness, and exponential convergence follow directly
for initial data in \(l_{\bbm}^2(\bbr)\). This is consistent with the situation in
\cite{E-2014}, where nonlocal diffusion equations associated with jump processes can be
treated under natural energy and integrability assumptions.

For the nonlinear FPE with a non-constant potential, however, removing the \(l^\infty\)-bound is substantially more delicate. The current well-posedness argument relies on the invariant region
	\[
	0<C_1\le \inf_{i\in\bbn}\rho_i(t)\le\sup_{i\in\bbn}\rho_i(t)\le C_2<\infty.
	\]
	Extending the theory to unbounded \(l_{\bbm}^2\)-densities in the fully nonlinear case would require additional compactness or moment estimates. We leave this as an interesting open problem for future research.
\end{remark}
	If we change the potential field $\Phi_i(t)$ to be
	\[\bar{\Phi}_i(t)=\Phi_i-\beta \log\rho_i(t),\quad \text {for }\quad i\in\bbn.\]
Then, we can use the new distance $g_{\rho}^{\bar{\Phi}}$ to obtain the following theorem.
\begin{theorem}\label{T2.2}
	Let \( \mathcal{G} = (\mathcal{V}, \mathcal{E},\bbm) \) be an infinite graph with vertex set \( \mathcal{V} = \{a_1, \dots, a_N, \dots\} \), edge set \( \mathcal{E} \), weight function $\bbm$, a potential \( \Phi = (\Phi_i)_{i=1}^{\infty}\in l^{\infty}(\bbr) \) defined on \( \mathcal{V} \), and a constant \( \beta \geq 0 \). Then, the following assertions hold:

\begin{enumerate}
	\item The gradient flow of \( \mathcal{F} \) on the Hilbert manifold \( ( \mathcal{M} , d^{\bar{\Phi}}) \) of probability densities on \( \mathcal{V} \) is given by the discrete FPE:
	\begin{align*}
		\frac{\di\rho_i}{\di t} = 
		& \sum_{ \bar{\Phi}_j > \bar{\Phi}_i} m_j\left[ (\Phi_j + \beta \log \rho_j) -(\Phi_i + \beta \log \rho_i) \right] \rho_j \\
		&	\quad+  \sum_{ \bar{\Phi}_j < \bar{\Phi}_i} m_j\left[ (\Phi_j + \beta \log \rho_j) - (\Phi_i + \beta \log \rho_i) \right] \rho_i.
	\end{align*}
	
	\item For all \( \beta > 0 \), the Gibbs distribution
	\[
	\rho_i^* = \frac{e^{\frac{-\Phi_i}{\beta}}}{\sum_i m_ie^{\frac{-\Phi_i}{\beta}}}, \quad i\in\bbn, 
	\]
	is the unique stationary distribution of the FPE. Furthermore, \( \mathcal{F} \) attains its global minimum at \( \rho^* \).
	
	\item For all \( \beta > 0 \), there exists a unique solution
	\[
	\rho(t): [0, \infty) \to {\rm Int} ( \mathcal{M} )\cap l^{\infty}(\bbr)
	\]
	to the FPE with initial value \( \rho(0) = \rho_0 \in {\rm Int} ( \mathcal{M} ) \cap l^{\infty}(\bbr)\), such that:
	
	\begin{itemize}
		\item[(a)] \( \mathcal{F}(
\rho|\bbm) \) is non-increasing in time \( t \);
\item[(b)]  there exists some constant $\bar{C}_1$ and $\bar{C}_2$ depend on initial value such that \[0<\bar{C}_1\le\inf\limits_{0\le t<\infty}\inf\limits_{i\in\bbn} \rho_i(t)\le \sup\limits_{0\le t<\infty}\sup\limits_{i\in\bbn} \rho_i(t)\le \bar{C}_2<\infty;\]
		\item[(c)] \( \rho(t)\) converges to \(\rho^* \) exponentially under the $l_{\rm \bbm}^2$-metric as \( t \to +\infty \).
	\end{itemize}
\end{enumerate}
\end{theorem}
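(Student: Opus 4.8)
The plan is to run the proof of Theorem~\ref{T2.1} essentially verbatim, with $\tau^{\Phi}$ replaced by $\tau^{\bar\Phi}$ and the classification carried out by $\bar\Phi$ instead of $\Phi$, and then to isolate the two places where the time-dependence of $\bar\Phi$ through $\rho(t)$ must be controlled.

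For assertion~(1) I would repeat the computation in the proof of Theorem~\ref{T2.1}(1): since $\mathcal{F}$, and hence $\mathrm{diff}\,\mathcal{F}(\rho|\bbm)=(\Phi_i+\beta(1+\log\rho_i))_i$, are unchanged, the defining identity $g^{\bar\Phi}_\rho(\tfrac{\di\rho}{\di t},\sigma)=-\langle\mathrm{diff}\,\mathcal{F}(\rho|\bbm),\sigma\rangle_{l^2_\bbm}$ together with the symmetry $\tau^{\bar\Phi}_{ij}=\tau^{\bar\Phi}_{ji}$ gives, exactly as before, $\tfrac{\di\rho_i}{\di t}=\sum_{j\in\bbn}m_j\big((\Phi_j+\beta\log\rho_j)-(\Phi_i+\beta\log\rho_i)\big)\tau^{\bar\Phi}_{ij}(\rho)$. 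Unpacking the $\bar\Phi_j>\bar\Phi_i$ and $\bar\Phi_j<\bar\Phi_i$ branches of $\tau^{\bar\Phi}$ reproduces the two sums in the claimed FPE; the $\bar\Phi_j=\bar\Phi_i$ contribution is rewritten as the diagonal sum was in Theorem~\ref{T2.1}(1) and, by the choice of $\bar\Phi$, drops out, which is also what makes the right-hand side continuous and in fact locally Lipschitz on $\{\inf_i\rho_i>0,\ \sup_i\rho_i<\infty\}$ (using that $(a,b)\mapsto(a-b)/(\log a-\log b)$ extends smoothly across $a=b$), so that a local solution exists to start the continuation. Assertion~(2) is metric-independent: $\tfrac{\di\rho}{\di t}=0$ forces $\Phi_i+\beta\log\rho_i$ to be constant in $i$ (by $\tau^{\bar\Phi}_{ij}>0$ and the injectivity established in Section~\ref{sec:2}), which with $\sum_i m_i\rho_i=1$ yields exactly $\rho^*$; strict convexity of $r\mapsto r\log r$ and linearity of $\sum_i m_i\Phi_i\rho_i$ give that $\rho^*$ is the unique minimizer, just as in Theorem~\ref{T2.1}(2).

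For assertion~(3) I would follow the four-step scheme and Propositions~\ref{P2.1}--\ref{P2.4}. The relative-energy estimate $\tfrac{\di L}{\di t}\le -\beta\,\tfrac{\inf_i\rho_i(t)}{\sup_i\rho_i(t)}\tfrac{\inf_i\rho_i^*}{\sup_i\rho_i^*}L(t)$ with $L(t)=\sum_i m_i(\rho_i-\rho_i^*)^2/\rho_i^*$ is obtained as in Proposition~\ref{P2.1} (indeed more quickly, since the diagonal sum is absent): writing $\Gamma_i=(\rho_i-\rho_i^*)/\rho_i^*$, using $\Phi_j-\Phi_i=\beta\log\rho_i^*-\beta\log\rho_j^*$ to recast the force as $\beta\log\frac{\rho_j}{\rho_j^*}-\beta\log\frac{\rho_i}{\rho_i^*}$, symmetrizing over $i,j$, and applying the mean-value bound together with $1+\Gamma_j=\rho_j/\rho_j^*$; the coefficients $\tau^{\bar\Phi}_{ij}$ drop out because in \emph{every} branch they lie between $\min\{\rho_i,\rho_j\}$ and $\max\{\rho_i,\rho_j\}$ by~\eqref{B-2-2}, so the value of the classifying function never enters the estimate. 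Proposition~\ref{P2.2} and Remark~\ref{R2.1} only use $\sum_i m_i\rho_i=1$, the resulting $l^2_\bbm$-bound $\|\rho(t)\|^2_{l^2_\bbm}\le C_0$, and $\sum_i m_i=1$, so there is again a time-varying index $j_0(t)\le N_0$ with $\rho_{j_0}(t)>\tfrac12$. Finally I would define $\bar C_1,\bar C_2$ by the analogues of \eqref{C-1-14}--\eqref{C-1-15} (with the same $\|\Phi\|_\infty$, since the force $\Phi_j+\beta\log\rho_j-\Phi_i-\beta\log\rho_i$ in the two-sum FPE is unchanged), apply Lemma~\ref{L2.1} to $\min_{i\le N}\rho_i$ and $\max_{i\le N}\rho_i$, and repeat the Case~A/Case~B analysis of Proposition~\ref{P2.3} with the trichotomy now $\bar\Phi_{j_0}\gtrless\bar\Phi_{i_N}$; the bounds in Cases A.1--A.3/B.1--B.3 used only $\rho_{j_0}>\tfrac12$, the two-sided bound $-2\|\Phi\|_\infty\le\Phi_j-\Phi_i$, and $\rho_j\ge\rho_{\inf}$ (resp.\ $\rho_j\le\rho_{\sup}$), none of which distinguishes $\Phi$ from $\bar\Phi$, and there is no diagonal sub-case. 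This gives $\tfrac{\di}{\di t}\inf_i\rho_i>0$ at $\inf_i\rho_i=\bar C_1$, $\tfrac{\di}{\di t}\sup_i\rho_i<0$ at $\sup_i\rho_i=\bar C_2$, hence global existence and the invariant set in~(b); then $\tfrac{\di}{\di t}\mathcal{F}(\rho|\bbm)=-g^{\bar\Phi}_\rho(\dot\rho,\dot\rho)\le 0$ (positive definiteness of $g^{\bar\Phi}$ on the invariant set from \eqref{B-2-8}--\eqref{B-2-9}) gives~(a), and Gr\"onwall applied to the energy inequality gives $L(t)\le L(0)e^{-\bar C t}$, which via $\|\rho-\rho^*\|^2_{l^2_\bbm}\le(\sup_i\rho_i^*)L$ yields the exponential $l^2_\bbm$-convergence in~(c).

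The step I expect to be the main obstacle is the invariant-set argument, precisely because the metric $g^{\bar\Phi}$ — and with it the drift coefficients $\tau^{\bar\Phi}_{ij}(\rho)$ — now depends on $\rho(t)$ and hence on time: one must verify that neither the a priori $\inf/\sup$ bounds nor the local well-posedness used to start the continuation relies on a time-frozen classification of the vertices. Both survive for two reasons, both already exploited above: the force entering the FPE is the same as in Theorem~\ref{T2.1} and, crucially, the \emph{value} of $\tau^{\bar\Phi}_{ij}$ in each of its three branches is squeezed between $\min\{\rho_i,\rho_j\}$ and $\max\{\rho_i,\rho_j\}$, so every quantitative inequality is classification-free; and on $\{\bar\Phi_i=\bar\Phi_j\}$ the driving force is annihilated, which simultaneously removes the diagonal sum from~(1) and restores continuity of the right-hand side.
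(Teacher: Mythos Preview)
Your proposal is correct and follows exactly the approach indicated in the paper, which simply states ``The proofs are similar to Theorem~\ref{T2.1}; we omit them here.'' You have supplied considerably more detail than the paper itself, correctly identifying that the only new issue is the time-dependence of the classification through $\bar\Phi(\rho(t))$, and correctly arguing that it is harmless because the driving force is unchanged, the pointwise two-sided bound $\min\{\rho_i,\rho_j\}\le\tau^{\bar\Phi}_{ij}\le\max\{\rho_i,\rho_j\}$ holds in every branch, and the diagonal contribution vanishes by the choice of $\bar\Phi$.
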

\begin{proof}
The proofs are similar to Theorem \ref{T2.1}; we omit them here.
\end{proof}
\section{Talagrand-type inequalities}\label{sec:4}
	\setcounter{equation}{0}
Talagrand-type inequalities have been studied widely in Euclidean space and finite graphs since they can be used to prove logarithmic Sobolev and subgaussian concentration inequalities \cite{O-V-2000, C-H-L-T}. In this section, we establish the Talagrand-type inequalities on discrete infinite graphs and compare our Hilbert manifold distance with the classical Wasserstein distance. 
	\subsection{Talagrand inequalities}
	In this subsection, we establish some local Talagrand-type inequalities. First, we recall that in \eqref{B-2-10} and \eqref{B-2-11} obtained that for $\sigma\in\mathcal{T}_{\rho}\mathcal{M}$,
	\begin{align}\label{D-1-1}
		\frac{\left(\inf\limits_{i\in\bbn}\rho_i\right)^2	}{\sup\limits_{i\in\bbn}\rho_i} g_{\rho}^{\Phi}(\sigma, \sigma)\le	\|\sigma\|_{l_{\bbm}^2}^2\le 2\left(\sup\limits_{i\in\bbn}\rho_i\right)	g_{\rho}^{\Phi}(\sigma, \sigma).
	\end{align}
\begin{theorem}\label{T4.1}
	Let \( \mathcal{G} = (\mathcal{V}, \mathcal{E},\bbm) \) be an infinite graph with vertex set \( \mathcal{V} = \{a_1, \dots, a_N, \dots\} \), edge set \( \mathcal{E} \), weight function $\bbm$, and a constant \( \beta > 0 \). For each $\mu=\left(\mu_i\right)_{i=1}^{\infty}\in {\rm Int}\mathcal{M}\cap l^{\infty}(\bbr)$ and a subset $\mathcal{C}_1$ of ${\rm Int}\mathcal{M}\cap l^{\infty}(\bbr)$ with 
	\begin{align}\label{D-1-3}
	0<\nu^{\mathcal{C}_1}_{\inf}:=\inf\limits_{\nu\in\mathcal{C}_1}\inf\limits_{i\in\bbn}\nu_i\le\sup\limits_{\nu\in\mathcal{C}_1}\sup\limits_{i\in\bbn}\nu_i=:\nu^{\mathcal{C}_1}_{\sup}<\infty,
	\end{align}
	 there exists a potential function $\Phi=\left(\Phi_i\right)_{i=1}^{\infty}$ $\left(\Phi_i=-\log \mu_i\right)$ on $\mathcal{V}$ and a constant $\kappa=\kappa(\nu^{\mathcal{C}_1}_{\inf}, \nu^{\mathcal{C}_1}_{\sup}, \mu, \bbm)>0$ such that for any $\nu=\left(\nu_i\right)_{i=1}^{\infty}\in\mathcal{C}_1$, we have the following local Talagrand-type inequality
	\begin{align}\label{D-1-2}
		d_{\Phi}^2(\mu,\nu)\le \kappa H(\nu|\mu),
	\end{align}
	where $H(\nu|\mu)=\sum\limits_{i=1}^{\infty}m_i\nu_i\log\frac{\nu_i}{\mu_i}$.
\end{theorem}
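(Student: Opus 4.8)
I would join $\mu$ and $\nu$ by the gradient--flow trajectory of $\mathcal{F}$ and bound its arc length by the initial free--energy gap, which for the chosen potential is exactly the relative entropy. Take $\Phi_i=-\log\mu_i$; since $\mu\in{\rm Int}\mathcal{M}\cap l^{\infty}(\bbr)$ this lies in $l^{\infty}(\bbr)$, and (with $\beta=1$; a general $\beta$ is handled by the potential $-\beta\log\mu_i$, carrying the factor $\beta$ along) the Gibbs distribution of Theorem~\ref{T2.1}(2) is $\rho^{*}=\mu$, so $\mu$ is the global minimiser of $\mathcal{F}$ and $\mathcal{F}(\rho|\bbm)=\sum_i m_i\rho_i\log(\rho_i/\mu_i)=H(\rho|\mu)$ for every $\rho\in\mathcal{M}$. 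For each $\nu\in\mathcal{C}_1$, Theorem~\ref{T2.1}(3) provides a unique global solution $\rho(\cdot;\nu):[0,\infty)\to{\rm Int}\mathcal{M}\cap l^{\infty}(\bbr)$ with $\rho(0;\nu)=\nu$ and $\rho(t;\nu)\to\mu$. The role of the hypothesis \eqref{D-1-3} is that the integer $N_0$ of Remark~\ref{R2.1}, the constants $C_1,C_2$ of \eqref{C-1-14}--\eqref{C-1-15}, and hence the decay rate $C$ of Proposition~\ref{P2.4}, depend on the initial datum only through its infimum and supremum (besides $\|\Phi\|_{\infty}$, $\beta$, $\mu$, $\bbm$); hence they may be chosen once and for all, uniformly over $\nu\in\mathcal{C}_1$.

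\textbf{Arc length estimate.} Write $E(t):=\mathcal{F}(\rho(t;\nu)|\bbm)-\mathcal{F}(\mu|\bbm)=H(\rho(t;\nu)|\mu)\ge0$; by \eqref{GF}, $E'(t)=-g_{\rho(t)}^{\Phi}(\dot\rho,\dot\rho)\le0$ and $E(\infty)=0$. Since $\rho(\cdot;\nu)$ runs from $\nu$ to $\mu$ (the latter as $t\to\infty$), it is, after reparametrisation, an admissible competitor for $d_{\Phi}(\mu,\nu)$ up to the usual $T\to\infty$ limiting argument (using $d_{\Phi}(\rho(T;\nu),\mu)\to0$, e.g. by the linear--interpolation bound in the last paragraph); hence
\[
d_{\Phi}(\mu,\nu)\ \le\ \int_0^{\infty}\sqrt{g_{\rho(t)}^{\Phi}(\dot\rho,\dot\rho)}\,\di t\ =\ \int_0^{\infty}\sqrt{-E'(t)}\,\di t .
\]
The analytic core is a weighted Cauchy--Schwarz step: for $0<\lambda<C$,
\[
\int_0^{\infty}\sqrt{-E'(t)}\,\di t\ \le\ \Bigl(\int_0^{\infty}e^{\lambda t}(-E'(t))\,\di t\Bigr)^{1/2}\Bigl(\int_0^{\infty}e^{-\lambda t}\,\di t\Bigr)^{1/2},
\]
and since $E$ will be shown to decay faster than $e^{-\lambda t}$, an integration by parts gives $\int_0^{\infty}e^{\lambda t}(-E')\,\di t=E(0)+\lambda\int_0^{\infty}e^{\lambda t}E(t)\,\di t$.

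\textbf{Controlling $E$ by the relative entropy.} Two elementary divergence inequalities close the loop. From $\log t\le t-1$ one gets the pointwise bound $H(\rho|\mu)\le\sum_i m_i(\rho_i-\mu_i)^2/\mu_i$, whose right side along the flow is the Lyapunov functional $L$ of Proposition~\ref{P2.1}; hence Proposition~\ref{P2.4} gives $E(t)=H(\rho(t;\nu)|\mu)\le L(0)\,e^{-Ct}$. Conversely, on the compact interval $[\nu^{\mathcal{C}_1}_{\inf}/\sup_k\mu_k,\ \nu^{\mathcal{C}_1}_{\sup}/\inf_k\mu_k]$ (which contains every ratio $\nu_i/\mu_i$) the function $t\mapsto t\log t-t+1$ is bounded below by $c_{\star}(t-1)^2$ for some $c_{\star}>0$ depending only on that interval, whence $L(0)=\sum_i m_i\mu_i(\nu_i/\mu_i-1)^2\le c_{\star}^{-1}H(\nu|\mu)$. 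Since also $E(0)=H(\nu|\mu)$, we get $E(t)\le c_{\star}^{-1}H(\nu|\mu)e^{-Ct}$; taking $\lambda=C/2$ in the previous step and combining the two displays yields $d_{\Phi}^{2}(\mu,\nu)\le\tfrac{2}{C}\bigl(1+c_{\star}^{-1}\bigr)H(\nu|\mu)$, i.e.\ \eqref{D-1-2} with $\kappa=\tfrac{2}{C}(1+c_{\star}^{-1})$, which through $C$ and $c_{\star}$ depends only on $\nu^{\mathcal{C}_1}_{\inf},\nu^{\mathcal{C}_1}_{\sup},\mu,\bbm,\beta$.

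\textbf{Main obstacle.} The genuinely delicate point is the \emph{uniformity over $\mathcal{C}_1$}: one must re-inspect the chain Remark~\ref{R2.1}~$\to$~Proposition~\ref{P2.3}~$\to$~Proposition~\ref{P2.4} to verify that $N_0$, $C_1$, $C_2$ and the rate $C$ depend on the initial density only via its supremum and infimum, so that \eqref{D-1-3} produces a single $\kappa$; the rest is bookkeeping. A shorter variant avoids the dynamics altogether: take the straight segment $\gamma(t)=(1-t)\mu+t\nu$, which remains in ${\rm Int}\mathcal{M}\cap l^{\infty}(\bbr)$ with $\inf_i\gamma_i(t)$ and $\sup_i\gamma_i(t)$ bounded uniformly over $\mathcal{C}_1$; the left inequality in \eqref{D-1-1} bounds $g_{\gamma(t)}^{\Phi}(\nu-\mu,\nu-\mu)$ by $\frac{\sup_i\gamma_i(t)}{(\inf_i\gamma_i(t))^{2}}\|\nu-\mu\|_{l_{\bbm}^2}^2$, and integrating in $t$ and applying the same reverse--Pinsker estimate to $\|\nu-\mu\|_{l_{\bbm}^2}^2$ again gives \eqref{D-1-2} — this route does not even use the specific form of $\Phi$.
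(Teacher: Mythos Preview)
Your argument is correct, and both routes you sketch differ from the paper's.

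The paper runs the Fokker--Planck flow only up to a \emph{finite} time $T$, chosen (via the exponential $l^2_{\bbm}$--decay) so that $\|\rho(T)-\mu\|_{l^2_{\bbm}}\le\|\rho(T)-\nu\|_{l^2_{\bbm}}$, and then splits $d_\Phi(\mu,\nu)$ by the triangle inequality into $d_\Phi(\nu,\rho(T))+d_\Phi(\rho(T),\mu)$. The first piece is bounded by the \emph{unweighted} Cauchy--Schwarz $\bigl(\int_0^T\sqrt{-E'}\bigr)^2\le T\int_0^T(-E')\le T\,\mathcal{F}(\nu|\bbm)$; the second by the same linear--interpolation/metric--comparison step you describe, after which $\|\rho(T)-\mu\|_{l^2_{\bbm}}^2$ is dominated by $\|\rho(T)-\nu\|_{l^2_{\bbm}}^2$ and then (via the right inequality in \eqref{D-1-1}) by $\mathcal{F}(\nu|\bbm)-\mathcal{F}(\rho(T)|\bbm)$. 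Your main route instead integrates to $t=\infty$ and uses the Otto--Villani weighted Cauchy--Schwarz together with $H\le L$ and the reverse--Pinsker bound $L(0)\le c_\star^{-1}H(\nu|\mu)$; this is cleaner, feeds the rate $C$ directly into $\kappa$, and avoids the somewhat ad hoc choice of $T$. Your ``shorter variant'' is essentially the paper's treatment of the second piece applied directly with $\rho(T)$ replaced by $\nu$ and closed with reverse Pinsker---it shows that, once one has the two--sided comparison \eqref{D-1-1} and the uniform bounds from \eqref{D-1-3}, the inequality is almost immediate and independent of the dynamics, at the price of a constant that does not reflect the gradient--flow structure. Your identification of the uniformity over $\mathcal{C}_1$ as the only genuine issue, and your observation that $N_0,C_1,C_2,C$ depend on $\rho_0$ solely through $\inf_i\rho_i(0)$ and $\sup_i\rho_i(0)$, coincide exactly with what the paper does when it introduces the constants $C_3,C_4$ in its proof.
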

\begin{proof}
	With loss of generality, we assume that $\beta=1$ and fix $\delta=\frac{1}{2}$ in Remark \ref{R2.1}. Let $\Phi_i=-\log \mu_i,~i\in\bbn$, we have $H(\nu|\mu)\ge0$, since $\mu$ is the Gibbs distribution for the relative free energy functional $\mathcal{F}(
\nu|\bbm)$. Moreover, we note that
\[\mathcal{F}(
\nu|\bbm)=H(\nu|\mu).\] 
Furthermore, we can combine Proposition \ref{P2.2}, Remark \ref{R2.1}, \eqref{D-1-3}, and \[0<\inf\limits_{i\in\bbn}\mu_i\le\sup\limits_{i\in\bbn}\mu_i<\infty\]
to see there exist some uniform $N_0$ such that \eqref{C-1-9} hold for solutions of \eqref{FP} with any initial data $\nu\in\mathcal{C}_1$.
	We define 
	\begin{align*}
		\mathcal{C}_2:=\left\{\nu\in \mathcal{M}~~\Big|~~0<C_3\le\inf\limits_{i\in\bbn} \rho_i\le \sup\limits_{i\in\bbn} \rho_i\le C_4<\infty\right\},
	\end{align*}
	where
	\begin{align}\label{D-1-4}
		C_3:=\min\left\{\frac{1}{2}\nu^{\mathcal{C}_1}_{\inf},~~\frac{e^{-\frac{4\|\Phi\|_{l^\infty}}{M_{N_0}}}}{4},~~ \frac{ M_{N_0}}{4(2\|\Phi\|_{l^\infty}+1)}\right\},
	\end{align}
	
	\begin{align}\label{D-1-5}
		C_4:=\max\left\{2\nu^{\mathcal{C}_1}_{\sup},\quad \frac{e^{\frac{2\|\Phi\|_{l^\infty}}{ C_3M_{N_0}}}}{2M_{N_0}},~~\frac{2\|\Phi\|_{l^\infty}+2}{ M_{N_0}},~\frac{2}{M_{N_0}}\right\}.
	\end{align}
Clearly, we have $\mathcal{C}_1\subset\mathcal{C}_2$ by the definition of $C_3$ and $C_4$ in \eqref{D-1-4}-\eqref{D-1-5}.   With the above preparations, we are going to show that there is a constant $\kappa(\nu^{\mathcal{C}_1}_{\inf}, \nu^{\mathcal{C}_1}_{\sup}, \mu, \bbm)$ such that 
	\[	d_{\Phi}^2(\mu,\nu)\le \kappa H(\nu|\mu)\]
	for any $\nu\in\mathcal{C}_1$. Before moving to the proof details, we summarize our proof strategy below.\newline 
    
	\begin{itemize}
		\item First, we let $\rho(t)$ be the solution of FPE \eqref{FP} for $\beta=1$ with initial value $\nu$. Then, we use (3) of Theorem \ref{T2.1} to see $\rho(t)\in \mathcal{C}_2$ for all $t\ge0$.
		\item Second, we use the fact that $\mu$ is the Gibbs distribution with Remark \ref{R2.2} and \eqref{NewC_3} to find 
		\[\|\rho(t)-\mu\|_{l_{\rm \bbm}^2}^2\le\frac{\sup_i \mu_i}{\inf_i \mu_i}\|\nu-\mu\|_{l_{\rm \bbm}^2}^2e^{-\frac{C_3}{2C_4}\frac{\left(\inf_i \mu_i\right)}{\left(\sup_i \mu_i\right)}t}=:\|\nu-\mu\|_{l_{\rm \bbm}^2}^2C_5e^{-C_6t}.\]
		This implies for $T=\frac{\log (4C_5)}{C_6}>0$ ($C_5=\frac{\sup_i \mu_i}{\inf_i \mu_i}\ge1$), 
		\[\|\rho(T)-\mu\|_{l_{\rm \bbm}^2}^2\le \frac{1}{4}\|\nu-\mu\|_{l_{\rm \bbm}^2}^2\le \frac{1}{2}\left(\|\rho(T)-\mu\|_{l_{\rm \bbm}^2}^2+\|\rho(T)-\nu\|_{l_{\rm \bbm}^2}^2\right).\]
		Therefore, we get $T$ to guarantee
		\begin{align}\label{D-1-6}
			\|\rho(T)-\nu\|_{l_{\rm \bbm}^2}^2\ge \|\rho(T)-\mu\|_{l_{\rm \bbm}^2}^2.
			\end{align}
		\item Third, we use 
		\begin{align}\label{D-1-7}
				\frac{\di \mathcal{F}(\rho|\bbm)}{\di t}=-	g_{\rho}^{\Phi}\left(\frac{\di \rho}{\di t}, \frac{\di \rho}{\di t}\right)
				\end{align} to find 
			\[ \mathcal{F}(\nu|\bbm)- \mathcal{F}(\rho(T)|\bbm)\ge \frac{1}{T}d_{\Phi}^2(\nu,\rho(T)).\]
			\item Finally, we obtain 
			\[d_{\Phi}^2(\nu,\rho(T))\le T \mathcal{F}(\nu|\bbm) \quad \text{and}\quad d_{\Phi}^2(\rho(T),\nu)\le 2T\left(\frac{C_4}{C_3}\right)^{2} \mathcal{F}(
\nu|\bbm). \]
			These yield
			\[d_{\Phi}^2(\nu,\mu)\le 2T\left(1+2\left(\frac{C_4}{C_3}\right)^{2}\right)\mathcal{F}(
\nu|\bbm)=:\kappa \mathcal{F}(
\nu|\bbm)=\kappa H(\nu|\mu).\]
	\end{itemize}
	Since the first two steps are easy to check, we start from the third step. Integrating \eqref{D-1-7} from $0$ to $T$, we use \eqref{D-1-1} and $\rho(t)\in\mathcal{C}_2$ to get 
	\begin{align}\label{D-1-8}
		\begin{aligned}
			 \mathcal{F}(
\nu|\bbm)- \mathcal{F}(
\rho(T)|\bbm)&=\int\limits_{0}^{T} g_{\rho(t)}^{\Phi}\left(\frac{\di \rho(t)}{\di t}, \frac{\di \rho(t)}{\di t}\right)\di t\ge \frac{1}{T}\left(\int\limits_{0}^{T} \sqrt{g_{\rho(t)}^{\Phi}\left(\frac{\di \rho(t)}{\di t}, \frac{\di \rho(t)}{\di t}\right)}\di t\right)^2\\
			&\ge\frac{1}{T}\left(\int\limits_{0}^{T} \frac{1}{\sqrt{2C_4}}\|\dot{\rho}\|_{l_{\bbm}^2}\di t\right)^2\ge \frac{1}{2TC_4}\|\rho(T)-\nu\|_{l_{\rm \bbm}^2}^2.
		\end{aligned}
		\end{align}
		Moreover, we use \eqref{B-2-12} to find 
			\begin{align*}
			\begin{aligned}
				 \mathcal{F}(
\nu|\bbm)- \mathcal{F}(
\rho(T)|\bbm)&=\int\limits_{0}^{T} g_{\rho(t)}^{\Phi}\left(\frac{\di \rho(t)}{\di t}, \frac{\di \rho(t)}{\di t}\right)\di t\ge \frac{1}{T}\left(\int\limits_{0}^{T} \sqrt{g_{\rho(t)}^{\Phi}\left(\frac{\di \rho(t)}{\di t}, \frac{\di \rho(t)}{\di t}\right)}\di t\right)^2\\
				&\ge\frac{1}{T}d_{\Phi}^2(\nu,\rho(T)).
			\end{aligned}
		\end{align*}
		This implies \begin{align}\label{D-1-9}
			d_{\Phi}^2(\nu,\rho(T))\le T( \mathcal{F}(
\nu|\bbm)- \mathcal{F}(
\rho(T)|\bbm))\le T \mathcal{F}(
\nu|\bbm).
			\end{align}
			Next, we combine the convexity of $\mathcal{C}_2$, $\rho(T)\in\mathcal{C}_2$ and $ \mu\in \mathcal{C}_2$ to derive curve $\gamma(t)=t\rho(T)+(1-t)\mu \in  \mathcal{C}_2$ for $t\in[0,1]$. Note that curve $\gamma(t)$ connect $\mu$ and $\rho(T)$. Then, we can use left hand of \eqref{D-1-1} to see 
			\begin{align*}
						d_{\Phi}^2(\rho(T),\mu)&\le \left(\int\limits_{0}^{1} \sqrt{g_{\gamma(t)}^{\Phi}\left(\frac{\di \gamma(t)}{\di t}, \frac{\di \gamma(t)}{\di t}\right)}\di t\right)^2\\
						&= \left(\int\limits_{0}^{1} \sqrt{g_{\gamma(t)}^{\Phi}\left(\rho(T)-\mu,\rho(T)-\mu\right)}\di t\right)^2\\
						&\le\left(\int\limits_{0}^{1} \sqrt{\frac{C_4}{(C_3)^2}\|\rho(T)-\mu\|_{l_{\bbm}^2}^2}\di t\right)^2=\frac{C_4}{(C_3)^2}\|\rho(T)-\mu\|_{l_{\bbm}^2}^2.
			\end{align*}
			We combine \eqref{D-1-6} and \eqref{D-1-8} to find
			\begin{align}\label{D-1-10}
				\begin{aligned}
					d_{\Phi}^2(\rho(T),\mu)&\le \frac{C_4}{(C_3)^2}\|\rho(T)-\mu\|_{l_{\bbm}^2}^2\le \frac{C_4}{(C_3)^2}\|\rho(T)-\nu\|_{l_{\bbm}^2}^2\\
					&\le 2T\left(\frac{C_4}{C_3}\right)^2\left(	 \mathcal{F}(
\nu|\bbm)- \mathcal{F}(
\rho(T)|\bbm)\right)\le 2T\left(\frac{C_4}{C_3}\right)^2 \mathcal{F}(
\nu|\bbm).
								\end{aligned}
			\end{align}
			Finally, we combine \eqref{D-1-9} with \eqref{D-1-10} to obtain 
			\begin{align*}
				\begin{aligned}
				d_{\Phi}^2(\nu,\mu)\le2 d_{\Phi}^2(\rho(T),\nu)+2d_{\Phi}^2(\rho(T),\mu)\le 2T\left(1+2\left(\frac{C_4}{C_3}\right)^{2}\right)\mathcal{F}(
\nu|\bbm)=\kappa H(\nu|\mu).
							\end{aligned}
						\end{align*}
						Note that $\kappa$ only depends on $\nu^{\mathcal{C}_1}_{\inf}, \nu^{\mathcal{C}_1}_{\sup}, \mu, $ and $\bbm$.
\end{proof}
		\subsection{Compared with Wasserstein distance}
		In this subsection, we compare our distance $d_{\Phi}(\mu,\nu)$ with the $1$-Wasserstein distance $W_1(\mu,\nu)$. \newline
		
		By Kantorovich-Rubinstein theorem \cite{Villani2009,Villani20003}, we have 
		\begin{align}\label{D-1-11}
		W_1(\mu,\nu)=\sup\limits_{\psi: {\rm Lip}(\psi)\le1\atop\|\psi\|_{l^\infty}\le1}\left|\sum\limits_{i=1}^{\infty}m_i\psi_i(\mu_i-\nu_i)\right|.
		\end{align}
		Now, we want to show that 
		\[W_1(\mu,\nu)\le 2	d_{\Phi}(\mu,\nu).\]
		Let $\gamma(t):[0,1]\to \mathcal{M}$ be a continuously differentiable curve with $\gamma(0)=\nu, ~\gamma(1)=\mu.$ For any test function $\psi$ with ${\rm Lip} \psi\le1$ and $\|\psi\|_{l^\infty}\le1$, we choose $p_t$ to be the identification \eqref{bijection} for $\dot{\gamma}(t)$ and then we have
	\[	\left|\sum\limits_{i=1}^{\infty}m_i\psi_i(\mu_i-\nu_i)\right|=\left|\int_0^1\sum\limits_{i=1}^{\infty}m_i\psi_i \dot{\gamma}_i(t)\di t\right|=\left|\int_0^1\sum\limits_{i=1}^{\infty}\sum_{j=1}^{\infty}m_im_j\psi_i \tau_{ij}^{\Phi}(\gamma(t))(p_i-p_j)\di t\right|.\]
	Furthermore, we combine the Cauchy inequality, $\|\psi\|_{l^\infty}\le1$, \eqref{B-2-6}, and \eqref{B-2-13} to derive 
	\begin{align*}
		&\quad		\left|\int_0^1\sum\limits_{i=1}^{\infty}\sum_{j=1}^{\infty}m_im_j\psi_i \tau_{ij}^{\Phi}(\gamma(t))(p_i-p_j)\di t\right|\\
		&\le \int_0^1\left(\sum\limits_{i=1}^{\infty}\sum_{j=1}^{\infty}m_im_j\tau_{ij}^{\Phi}(\gamma(t))(\psi_i)^2\right)^{\frac{1}{2}} \left(\sum\limits_{i=1}^{\infty}\sum_{j=1}^{\infty}m_im_j\tau_{ij}^{\Phi}(\gamma(t))(p_i-p_j)^2\right)^{\frac{1}{2}}\di t\\
		&\le\int_0^1\left(\sum\limits_{i=1}^{\infty}\sum_{j=1}^{\infty}m_im_j\tau_{ij}^{\Phi}(\gamma(t))\right)^{\frac{1}{2}} \left(\sum\limits_{i=1}^{\infty}\sum_{j=1}^{\infty}m_im_j\tau_{ij}^{\Phi}(\gamma(t))(p_i-p_j)^2\right)^{\frac{1}{2}}\di t\\
		&\le\sqrt{2} \int_0^1 \left(\sum\limits_{i=1}^{\infty}\sum_{j=1}^{\infty}m_im_j\tau_{ij}^{\Phi}(\gamma(t))(p_i-p_j)^2\right)^{\frac{1}{2}}\di t\\
        &=2\int_0^1 \sqrt{g_{\gamma(t)}^\Phi(\dot{\gamma}(t), \dot{\gamma}(t))} \, \di t=2\mathcal{L}(\gamma).
	\end{align*}
	Here, we used\[\sum\limits_{i=1}^{\infty}\sum_{j=1}^{\infty}m_im_j\tau_{ij}^{\Phi}(\gamma(t))\le \sum\limits_{i=1}^{\infty}\sum_{j=1}^{\infty}\max\left\{m_im_j\gamma_j(t),m_jm_i\gamma_i(t)\right\}\le 2\sum\limits_{i=1}^{\infty}m_i\gamma_i(t)=2.\]
Finally, the arbitrariness of $\psi$ and $\gamma$ implies
	\[W_1(\mu,\nu)\le 2d_{\Phi}(\mu,\nu)\,,\]
leading to the desired estimate.

\bigskip

%
%
%
%
%
%
\bibliographystyle{amsplain}
\appendix
\section{The map $\tau_{\Phi(\rho)}$ is an injection}\label{App-A}
	\setcounter{equation}{0}
We prove that the map \(\tau_{\Phi(\rho)}:\mathcal W\to\bT_\rho\mathcal M\) is injective under the standing assumption
\[
0<\rho_{\inf}:=\inf_{i\in\bbn}\rho_i\le\sup_{i\in\bbn}\rho_i=:\rho_{\sup}<\infty.
\]
By \eqref{B-2-2}, this implies
\[
0<\rho_{\inf}\le\tau_{ij}^{\Phi}(\rho)\le\rho_{\sup}<\infty,\qquad i,j\in\bbn.
\]
Suppose that \(\tau_{\Phi(\rho)}([p])=0\). Then, for every \(i\in\bbn\),
\[
\sum_{j=1}^{\infty}m_j\tau_{ij}^{\Phi}(\rho)(p_i-p_j)=0.
\]
Equivalently,
\begin{equation}\label{A.1}
	\frac{\sum_{j=1}^{\infty}m_j\tau_{ij}^{\Phi}(\rho)(p_j-p_i)}
	{\sum_{j=1}^{\infty}m_j\tau_{ij}^{\Phi}(\rho)}
	=0,\qquad i\in\bbn.
\end{equation}
We will show that \(p_i=p_j\) for all \(i,j\in\bbn\). Since \(\mathcal W\) is the quotient space modulo constants, this proves injectivity. We split the proof into four cases.

\medskip
\noindent
\textbf{Case 1: the supremum is attained.}
Assume that there exists \(i_\infty\in\bbn\) such that
\[
p_{i_\infty}=\sup_{i\in\bbn}p_i.
\]
If \(p\) is not constant, then there exists \(j_0\in\bbn\) such that
\[
p_{j_0}<p_{i_\infty}.
\]
Since \(m_{j_0}>0\) and \(\tau_{i_\infty j_0}^{\Phi}(\rho)>0\), we have
\[
\sum_{j=1}^{\infty}m_j\tau_{i_\infty j}^{\Phi}(\rho)(p_j-p_{i_\infty})<0,
\]
because each term is non-positive and the \(j_0\)-term is strictly negative. Hence
\[
\frac{\sum_{j=1}^{\infty}m_j\tau_{i_\infty j}^{\Phi}(\rho)(p_j-p_{i_\infty})}
{\sum_{j=1}^{\infty}m_j\tau_{i_\infty j}^{\Phi}(\rho)}
<0,
\]
which contradicts \eqref{A.1}. Therefore, if the supremum is attained, \(p\) must be constant.

\medskip
\noindent
\textbf{Case 2: \(p\) is bounded but the supremum is not attained.}
Assume
\[
-\infty<p_{\inf}:=\inf_{i\in\bbn}p_i<p_{\sup}:=\sup_{i\in\bbn}p_i<\infty.
\]
We prove that this is impossible. Define
\[
I_\infty:=\left\{i\in\bbn:\frac{p_i-p_{\inf}}{p_{\sup}-p_{\inf}}=1\right\}
\]
and, for \(n\in\bbn\),
\[
I_n:=\left\{i\in\bbn:1-\frac1n\le\frac{p_i-p_{\inf}}{p_{\sup}-p_{\inf}}<1-\frac1{n+1}\right\}.
\]
Since \(p_{\inf}<p_{\sup}\), at least one of these sets is non-empty. Let \(n_0\) be the smallest integer such that \(I_{n_0}\neq\emptyset\).

If \(I_\infty\neq\emptyset\), then choosing \(i\in I_\infty\) and \(j\in I_{n_0}\), we have \(p_j<p_i\). As in Case 1, this gives
\[
\frac{\sum_{j=1}^{\infty}m_j\tau_{ij}^{\Phi}(\rho)(p_j-p_i)}
{\sum_{j=1}^{\infty}m_j\tau_{ij}^{\Phi}(\rho)}
<0,
\]
which contradicts \eqref{A.1}. Hence \(I_\infty=\emptyset\).

Since \(I_\infty=\emptyset\) and \(p_{\sup}\) is the supremum, there exists an increasing sequence \(\{n_l\}_{l\ge0}\) such that \(I_{n_l}\neq\emptyset\) and \(n_l\to\infty\). Fix \(n_2>n_0+1\) with \(I_{n_2}\neq\emptyset\). For \(i\in I_{n_2}\) and \(j\in I_{n_0}\), we have
\[
p_j-p_i\le-\frac{n_2-1-n_0}{n_2(n_0+1)}(p_{\sup}-p_{\inf}).
\]
Therefore,
\[
\frac{\sum_{j\in I_{n_0}}m_j\tau_{ij}^{\Phi}(\rho)(p_j-p_i)}
{\sum_{j=1}^{\infty}m_j\tau_{ij}^{\Phi}(\rho)}
\le-\frac{\rho_{\inf}}{\rho_{\sup}}\left(\sum_{j\in I_{n_0}}m_j\right)
\frac{n_2-1-n_0}{n_2(n_0+1)}(p_{\sup}-p_{\inf})<0.
\]
Moreover, for \(i\in I_n\), we have
\[
p_j-p_i\le p_{\sup}-p_i\le\frac1n(p_{\sup}-p_{\inf}),\qquad j\in\bbn.
\]
Thus, for \(i\in I_{n_N}\) with \(n_N\) sufficiently large,
\begin{align*}
	&\frac{\sum_{j=1}^{\infty}m_j\tau_{ij}^{\Phi}(\rho)(p_j-p_i)}
	{\sum_{j=1}^{\infty}m_j\tau_{ij}^{\Phi}(\rho)}\\
	&\le-\frac{\rho_{\inf}}{\rho_{\sup}}\left(\sum_{j\in I_{n_0}}m_j\right)
	\frac{n_N-1-n_0}{n_N(n_0+1)}(p_{\sup}-p_{\inf})
	+\frac1{n_N}(p_{\sup}-p_{\inf})<0.
\end{align*}
This contradicts \eqref{A.1}. Hence the bounded non-constant case cannot occur.

\medskip
\noindent
\textbf{Case 3: the supremum is infinite.}
Assume \(p_{\sup}=+\infty\). Since \(p\in l_{\bbm}^2\), we have
\[
\sum_{j=1}^{\infty}m_j|p_j|\le\|p\|_{l_{\bbm}^2}.
\]
For every \(i\in\bbn\),
\begin{align*}
	\frac{\sum_{j=1}^{\infty}m_j\tau_{ij}^{\Phi}(\rho)(p_j-p_i)}
	{\sum_{j=1}^{\infty}m_j\tau_{ij}^{\Phi}(\rho)}
	&=\frac{\sum_{j=1}^{\infty}m_j\tau_{ij}^{\Phi}(\rho)p_j}
	{\sum_{j=1}^{\infty}m_j\tau_{ij}^{\Phi}(\rho)}-p_i\\
	&\le\frac{\rho_{\sup}}{\rho_{\inf}}\|p\|_{l_{\bbm}^2}-p_i.
\end{align*}
Since \(p_{\sup}=+\infty\), we may choose \(i_0\in\bbn\) such that
\[
p_{i_0}>\frac{\rho_{\sup}}{\rho_{\inf}}\|p\|_{l_{\bbm}^2}.
\]
Then
\[
\frac{\sum_{j=1}^{\infty}m_j\tau_{i_0j}^{\Phi}(\rho)(p_j-p_{i_0})}
{\sum_{j=1}^{\infty}m_j\tau_{i_0j}^{\Phi}(\rho)}
<0,
\]
contradicting \eqref{A.1}.

\medskip
\noindent
\textbf{Case 4: the infimum is negative infinity.}
Assume \(p_{\inf}=-\infty\). Similarly,
\begin{align*}
	\frac{\sum_{j=1}^{\infty}m_j\tau_{ij}^{\Phi}(\rho)(p_j-p_i)}
	{\sum_{j=1}^{\infty}m_j\tau_{ij}^{\Phi}(\rho)}
	&=\frac{\sum_{j=1}^{\infty}m_j\tau_{ij}^{\Phi}(\rho)p_j}
	{\sum_{j=1}^{\infty}m_j\tau_{ij}^{\Phi}(\rho)}-p_i\\
	&\ge-\frac{\rho_{\sup}}{\rho_{\inf}}\|p\|_{l_{\bbm}^2}-p_i.
\end{align*}
Since \(p_{\inf}=-\infty\), we may choose \(i_0\in\bbn\) such that
\[
p_{i_0}<-\frac{\rho_{\sup}}{\rho_{\inf}}\|p\|_{l_{\bbm}^2}.
\]
Then
\[
\frac{\sum_{j=1}^{\infty}m_j\tau_{i_0j}^{\Phi}(\rho)(p_j-p_{i_0})}
{\sum_{j=1}^{\infty}m_j\tau_{i_0j}^{\Phi}(\rho)}
>0,
\]
again contradicting \eqref{A.1}.

The four cases exhaust all possible alternatives. In each non-constant case, we obtain a contradiction to \eqref{A.1}. Hence \(p\) must be constant. Therefore \([p]=0\) in \(\mathcal W\), which proves that \(\tau_{\Phi(\rho)}\) is injective.

\section*{Acknowledgments} 
The authors are deeply indebted and grateful to the anonymous reviewers whose extensive comments and suggestions have substantially improved the quality of this paper.
\section*{Conflict of interest statement}
The authors declare no conflicts of interest.

\section*{Data availability statement}
The data supporting the findings of this study are available from the corresponding author upon reasonable request.

\section*{Ethical statement}
The authors declare that this manuscript is original, has not been published before, and is not currently being considered for publication elsewhere. The study was conducted by the principles of academic integrity and ethical research practices. All sources and contributions from others have been properly acknowledged and cited. The authors confirm that there is no fabrication, falsification, plagiarism, or inappropriate manipulation of data in the manuscript.

\end{document}